\DeclareFontFamily{OT1}{rsfs}{}
\DeclareFontShape{OT1}{rsfs}{n}{it}{<-> rsfs10}{}
\DeclareMathAlphabet{\curly}{OT1}{rsfs}{n}{it}
\newcommand{\eqnum}{\refstepcounter{equation}\textup{\tagform@{\theequation}}}
\renewcommand\;{\hspace{.6pt}}
\newcommand\PP{\mathbb P}
\newcommand\LL{\mathbb L}
\newcommand\C{\mathbb C}
\newcommand\Q{\mathbb Q}
\newcommand\Z{\mathbb Z}
\newcommand\cO{\mathcal O}
\renewcommand\cD{\mathcal D}
\newcommand\cF{\mathcal F}
\newcommand\cI{\mathcal I}
\renewcommand\cL{\mathcal L}
\newcommand\cQ{\mathcal Q}
\newcommand\cU{\hspace{1pt}\mathcal U}
\renewcommand\t{\mathfrak t}
\newcommand{\so}{\ \ext@arrow 0359\Rightarrowfill@{}{\hspace{3mm}}\ }
\newcommand{\rt}[1]{\xrightarrow{\ #1\ }}
\newcommand\To{\longrightarrow}
\newcommand\into{\hookrightarrow}
\newcommand\INTO{\ \ar@{^(->}[r]<-.2ex>}
\newcommand{\Into}{\,\ensuremath{\lhook\joinrel\relbar\joinrel\rightarrow}\,}
\renewcommand\Mapsto{\ensuremath{\shortmid\joinrel\relbar\joinrel\rightarrow}}
\renewcommand\_{^{}_}
\newcommand\take{\backslash}
\newfont{\bigtimesfont}{cmsy10 scaled \magstep5}
\newcommand{\bigtimes}{\mathop{\lower0.9ex\hbox{\bigtimesfont\symbol2}}}
\renewcommand\={\ =\ }
\newcommand{\wwedge}{\mbox{\Large $\wedge$}}
\newcommand\At{\operatorname{At}}
\newcommand\Gr{\operatorname{Gr}}
\newcommand\rk{\operatorname{rank}}
\newcommand\vir{\operatorname{vir}}
\newcommand\vd{\operatorname{vd}}
\newcommand\tr{\operatorname{tr}}
\newcommand\coker{\operatorname{coker}}
\newcommand\im{\operatorname{im}}
\newcommand\id{\operatorname{id}}
\newcommand\Hom{\operatorname{Hom}}
\renewcommand\hom{\curly H\!om}
\newcommand\Ext{\operatorname{Ext}}
\newcommand\ext{\curly E\!\;xt}
\newcommand\Pic{\operatorname{Pic}}
\newcommand\Proj{\operatorname{Proj}\,}
\newcommand\Sym{\operatorname{Sym}}
\newcommand\Cone{\operatorname{Cone}}
\newcommand\beq[1]{\begin{equation}\label{#1}}
\newcommand\eeq{\end{equation}}
\newcommand\beqa{\begin{eqnarray*}}
\newcommand\eeqa{\end{eqnarray*}}
\newcommand\arXiv[1]{\href{http://arxiv.org/abs/#1}{arXiv:#1}}
\newcommand\mathAG[1]{\href{http://arxiv.org/abs/math/#1}{math.AG/#1}}
\renewcommand{\S}[1]{S^{[#1]}}
\renewcommand{\a}{\alpha}
\renewcommand{\b}{\beta}
\newcommand{\s}{\sigma}
\newcommand{\bb}[1]{\mathbb{#1}}
\newcommand{\cc}[1]{\mathcal{#1}}
\newcommand{\xr}[1]{\xrightarrow{#1}}
\newcommand{\wt}[1]{\widetilde{#1}}
\newcommand{\op}[1]{\operatorname{#1}}
\newcommand{\onto}[1]{\xymatrix{\ar@{->>}[r]^-{#1} & }}
\newcommand{\bu}{\bullet}
\DeclareRobustCommand{\SkipTocEntry}[3]{}
\newcommand\@dotsep{4.5}
\def\@tocline#1#2#3#4#5#6#7{\relax
  \ifnum #1>\c@tocdepth 
  \else
    \par \addpenalty\@secpenalty\addvspace{#2}%
    \begingroup \hyphenpenalty\@M
    \@ifempty{#4}{%
      \@tempdima\csname r@tocindent\number#1\endcsname\relax
    }{%
      \@tempdima#4\relax
    }%
    \parindent\z@ \leftskip#3\relax \advance\leftskip\@tempdima\relax
    \rightskip\@pnumwidth plus1em \parfillskip-\@pnumwidth
    #5\leavevmode #6\relax
    \leaders\hbox{$\m@th
      \mkern \@dotsep mu\hbox{.}\mkern \@dotsep mu$}\hfill
    \hbox to\@pnumwidth{\@tocpagenum{#7}}\par
    \nobreak
    \endgroup
  \fi}
\makeatletter \@addtoreset{equation}{section} \makeatother
\renewcommand{\theequation}{\thesection.\arabic{equation}}
\newtheorem{thm}[equation]{Theorem}
\newtheorem{thm*}{Theorem}
\newtheorem{lem}[equation]{Lemma}
\newtheorem{cor}[equation]{Corollary}
\newtheorem{prop}[equation]{Proposition}
\newtheorem{prop*}{Proposition}
\newenvironment{rmk}{\noindent\textbf{Remark}.}{\medskip}
\newenvironment{rmks}{\noindent\textbf{Remarks}.}{\medskip}
\title{Degeneracy loci, virtual cycles and nested Hilbert schemes II}
\author{Amin Gholampour and Richard P. Thomas}
\begin{document}
\maketitle
\begin{abstract} We express nested Hilbert schemes of points and curves on a smooth projective surface as ``virtual resolutions" of degeneracy loci of maps of vector bundles on smooth ambient spaces. 

We show how to modify the resulting obstruction theories to produce the virtual cycles of Vafa-Witten theory and other sheaf-counting problems. The result is an effective way of calculating invariants (VW, SW, local PT and local DT) via Thom-Porteous-like Chern class formulae.
\end{abstract}

\renewcommand\contentsname{\vspace{-9mm}}
\tableofcontents \vspace{-1cm}


\section{Introduction and summary of results} \label{intro}
\noindent\textbf{Overview.}
It is usually hard to calculate with the virtual fundamental class $[M]^{\vir}$ of a moduli space $M$. In the rare situation that $M$ (and its perfect obstruction theory) is cut out of a smooth ambient space $\iota\colon M\into X$ by a section of a vector bundle $E$, the push forward of the virtual cycle $\iota_*[M]^{\vir}=e(E)$ is the Euler class of $E$, making calculation possible.

In \cite{GT1} we described a generalisation to the case where $M$ is instead the \emph{deepest} degeneracy locus of a map of vector bundles on a smooth ambient space. This we applied to nested Hilbert schemes of \emph{points} on smooth surfaces, reproducing the virtual cycles of \cite{GSY1} --- which agree with those arising in Vafa-Witten theory \cite{TT1} or reduced local DT theory \cite{GSY2}.

That technique does not extend to other degeneracy loci, but it does apply to their \emph{resolutions} \eqref{resl}. We apply this to nested Hilbert schemes of \emph{points and curves} on smooth surfaces with their virtual cycles \cite{GSY1} coming from Vafa-Witten theory \cite{TT1} or reduced DT theory \cite{GSY2}. The result is an effective new tool for computing sheaf-theoretic enumerative invariants of projective surfaces, such as those arising in Seiberg-Witten, Vafa-Witten, local DT and local PT theories.

\medskip\noindent\textbf{Degeneracy loci.} Let $X$ be a smooth complex quasi-projective variety and
\beq{2term}
\s\,\colon\ E_0\To E_1
\eeq
a map of vector bundles of ranks $e_0$ and $e_1$ over $X$. For a positive integer $r\le e_0$, consider the $r$\;th degeneracy locus of $\s $,
$$
D_r(\s)\ :=\ \Big\{ x\in X \colon\dim\ker\;(\s_x) \ge r \Big \}
$$
with scheme structure given by viewing it as the zero locus of
$$
\wwedge^{e_0-r+1}\s\,\colon\ \wwedge^{e_0-r+1} E_0\To \wwedge^{e_0-r+1} E_1.
$$
That is, $D_r(\sigma)$ is the $(e_1-e_0+r-1)$th Fitting scheme of the first cohomology sheaf $h^1(E_\bu)=\coker\sigma$ of the complex $E_\bu$ \eqref{2term}. In particular it depends on $E_\bu$ only up to quasi-isomorphism, and we often denote it $D_r(E_\bu)$.

For the nested Hilbert scheme applications in this paper we will take only $r=1$. On a first read-through of the abstract construction the reader may find it simpler to do the same, substituting projective bundles $\PP(-)$ for the Grassmann bundles $\Gr(r,-)$ that follow.

\medskip\noindent\textbf{Resolutions.} Over points $x\in D_r$ with $\dim\ker\;(\s_x)=r$ there is a \emph{unique} $r$-dimensional subspace of $E_0|_x$ on which $\sigma_x$ vanishes. Taking the set of all such $r$-dimensional subspaces for \emph{every} point of $D_r$ defines a natural space $\wt D_r$ dominating $D_r$,\beq{resl}
\xymatrix@=15pt{
\wt D_r(\s) \ar[d]_p& \hspace{-6mm}:=\ \big\{(x,U)\in\Gr\;(r,E_0)\ \colon\ U \subseteq \ker\;(\s_x)\subseteq E_0|_x\big\} \\
D_r(\s),}\vspace{-1mm}
\eeq
whose fibre over $x$ is $\Gr\;(r,\ker\s_x)$. Its scheme structure\footnote{A more efficient description of $\wt D_r(\sigma)$ is as the relative Grassmannian $\Gr\;(\coker\sigma^*,r)$ of $r$-dimensional quotients of the sheaf $\coker(\sigma^*)$.} comes from viewing  $\wt D_r(\s)\subset\Gr\;(r,E_0)\rt{p}X$ as the vanishing locus of the composition 
\beq{vanloc}
\cU\Into p^*E_0 \xr{p^* \s} p^*E_1.
\eeq
Here $\cU \subset p^*E_0$ denotes the universal rank $r$ subbundle on $\Gr\;(r,E_0)$ (or its restriction to $\wt D_r$) whose fibre at $(x,U)$ \eqref{resl} is $U$. If $\sigma$ is appropriately transverse then each $D_r$ has codimension $r(e_1-e_0+r)$, singular locus $D_{r+1}$, and $\wt D_r\to D_r$ is a \emph{resolution of singularities}. For arbitrary $\sigma$ we call $\wt D_r$ the \emph{virtual resolution} of $D_r$ since it is \emph{virtually smooth}:

\begin{prop*}\label{mainp} Expressing $\wt D_r(E_\bu)$ as the zero locus  of the section \eqref{vanloc} of the bundle $\cU^*\otimes p^*E_1$ endows it with a perfect obstruction theory
$$
\Big\{p^*T_X\To\cU^*\otimes\mathrm{Cone}\,(\cU\to p^*E_\bullet)[1]\Big\}^\vee\To\LL_{\wt D_r}
$$
of virtual dimension $\vd:=\dim X-r(e_1-e_0+r)$. It depends only on the quasi-isomorphism class of the complex $E_\bullet$ \eqref{2term}. The resulting virtual cycle
$$
\big[\wt D_r(E_\bu)\big]^{\vir}\ \in\ A_{\vd}\big(\wt D_r(E_\bu)\big),
$$
when pushed forward to $X$, has class given by the Thom-Porteous formula\footnote{Here $\Delta^a_b(c):=\det(c\_{b+j-i})_{1\le i,j\le a}\,,$ as in \cite[Chapter 14]{Fu}.}
$$
\Delta\;_{r-\op{rk}(E_\bu)}^r\big(c(-E_\bullet)\big)\ \in\ A_{\vd}(X).
$$
\end{prop*}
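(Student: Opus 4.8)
The proof has three parts: producing an obstruction theory, recognising it in the stated form, and computing the push‑forward. Write $Y:=\Gr(r,E_0)$ with projection $p\colon Y\To X$. Since $X$ is smooth and $Y\To X$ is a Grassmann bundle, $Y$ is smooth of dimension $\dim X+r(e_0-r)$, and by \eqref{vanloc} the scheme $\wt D_r(\sigma)$ is the zero locus of the section $s\in\Gamma(Y,F)$ of $F:=\cU^*\otimes p^*E_1$ determined by $\cU\into p^*E_0\rt{p^*\sigma}p^*E_1$. The plan is to invoke the standard perfect obstruction theory carried by the zero locus of a section of a vector bundle on a smooth base, namely $[\,F^\vee\To\Omega_Y\,]|_{\wt D_r}\To\LL_{\wt D_r}$, concentrated in degrees $[-1,0]$, whose only nonzero differential is the dual of the derivative $ds\colon T_Y\To F$ of $s$. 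Its virtual dimension is $\rk T_Y-\rk F=\big(\dim X+r(e_0-r)\big)-re_1=\dim X-r(e_1-e_0+r)=\vd$, and the resulting virtual cycle is the localized top Chern class of $F$ cut out by $s$ (an element of $A_{\vd}(\wt D_r)$), which by the standard compatibility pushes forward to $c_{re_1}(F)\cap[Y]$ in $A_*(Y)$.

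Next I would rewrite this obstruction theory — equivalently its dual $T^{\vir}=[\,T_Y\rt{ds}F\,]$, placed in degrees $0,1$ — into the asserted form. The relative Euler sequence of the Grassmann bundle is $0\To\cU^*\otimes\cQ\To T_Y\To p^*T_X\To0$ with $\cQ:=p^*E_0/\cU$, so $T_{Y/X}=\cU^*\otimes\cQ$. Over $\wt D_r$ the section $s$ vanishes, i.e.\ $\cU\subseteq\ker(p^*\sigma)$, so $p^*\sigma$ factors through a map $\bar\sigma\colon\cQ\To p^*E_1$; and a direct local computation of $ds$ shows its restriction to $T_{Y/X}=\cU^*\otimes\cQ$ equals $\id_{\cU^*}\otimes\bar\sigma$. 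Hence the subcomplex $[\,\cU^*\otimes\cQ\To F\,]$ of $T^{\vir}$ is $\cU^*\otimes[\,\cQ\rt{\bar\sigma}p^*E_1\,]$, and the tautological sequence $0\To\cU\To p^*E_0\To\cQ\To0$ identifies $[\,\cQ\To p^*E_1\,]$ with $\Cone(\cU\To p^*E_\bullet)$ over $\wt D_r$. The distinguished triangle attached to the Euler sequence therefore reads $\cU^*\otimes\Cone(\cU\To p^*E_\bullet)\To T^{\vir}\To p^*T_X\rt{+1}$, which is exactly the claim that $T^{\vir}\simeq\Big\{p^*T_X\To\cU^*\otimes\Cone(\cU\To p^*E_\bullet)[1]\Big\}$; dualising recovers the stated obstruction theory, the map to $\LL_{\wt D_r}$ being unchanged. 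For independence of the quasi-isomorphism class of $E_\bullet$: the intrinsic description $\wt D_r(E_\bullet)=\Gr(\coker(\sigma^*),r)$ of the footnote — compatible with \eqref{resl} via the pointwise isomorphism $\coker(\sigma^*_x)\cong(\ker\sigma_x)^*$ — recovers $\wt D_r$, its universal subbundle $\cU$ and the map $p^*\sigma$ from the object $E_\bullet\in D^b(X)$ alone, since $\coker(\sigma^*)=h^0(E_\bullet^\vee)$; that the section \eqref{vanloc} and hence the obstruction theory are transported accordingly is checked as in the analogous independence statement in \cite{GT1} (reducing to adjoining a trivial acyclic two-term summand).

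For the Thom--Porteous formula, $p|_{\wt D_r}\colon\wt D_r\To X$ is proper, so by the above $(p|_{\wt D_r})_*\big[\wt D_r(E_\bullet)\big]^{\vir}$ is the image in $A_{\vd}(X)$ of the localized top Chern class above, namely $p_*\big(c_{re_1}(\cU^*\otimes p^*E_1)\cap[Y]\big)$. To evaluate this I would write $c_{re_1}(\cU^*\otimes p^*E_1)=\prod_{i=1}^r\sum_{k\ge0}c_k(p^*E_1)\,x_i^{\,e_1-k}$, where $x_1,\dots,x_r$ are the Chern roots of $\cU^*$, and then apply the standard Gysin (push‑forward) formula for Grassmann bundles, which expresses $p_*$ of a Schur‑type symmetric polynomial in the $x_i$ as a determinant in the Segre classes of $E_0$; collecting terms rewrites the outcome as the $r\times r$ determinant $\Delta^r_{\,r-\rk(E_\bullet)}\big(c(-E_\bullet)\big)$ with $c(-E_\bullet)=c(E_1)/c(E_0)$. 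This is precisely the universal Chern‑class identity by which the Thom--Porteous class of $D_r(\sigma)$ is obtained from its Grassmann‑bundle desingularization, so the computation can be quoted directly from \cite[Ch.~14]{Fu}.

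The one genuine obstacle I anticipate is the derivative‑of‑the‑section computation in the second paragraph: one must control $ds$ on all of $T_Y$, not just on the vertical subbundle, and keep track of shifts precisely enough to recognise $T^{\vir}$ as the asserted mapping cone, and then check that under this identification the map to $\LL_{\wt D_r}$ remains the canonical one. The remaining ingredients — existence and amplitude of the obstruction theory, its virtual dimension, identification of the virtual cycle with the localized top Chern class, the quasi‑isomorphism invariance, and the Grassmann‑bundle push‑forward — are all standard or directly quotable.
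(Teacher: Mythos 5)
Your proposal is correct and follows the same essential route as the paper: realise $\wt D_r$ as the zero locus of $\wt\sigma$ inside $\Gr(r,E_0)$ and invoke the standard zero-locus obstruction theory; identify $T^{\vir}=[T_Y\rt{ds}\cU^*\otimes p^*E_1]$ in the asserted cone form by computing the vertical component of $ds$; then push forward via the Grassmann-bundle Gysin formula to obtain the Thom--Porteous class. The derivative fact you flag as the one real obstacle --- that $ds$ restricted to $T_{Y/X}\cong\cU^*\otimes\cQ$ is $\id_{\cU^*}\otimes\bar\sigma$ --- is precisely the commutativity the paper imports from \cite[Claim~2, proof of Theorem~3.6]{GT1}, so your instinct about where the content lies is right. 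The only (mild) divergence is in how the cone is packaged: you read the triangle $\cU^*\otimes\Cone(\cU\to p^*E_\bullet)\to T^{\vir}\to p^*T_X\rt{+1}$ directly off the short exact sequence of two-term complexes coming from the filtration of $T_Y$ by $T_{Y/X}$, whereas the paper (Proposition~\ref{vcb}) first replaces $\LL^{\vir}$ by the quasi-isomorphic total complex \eqref{mod3} and then runs an Atiyah-class argument to show a connecting map vanishes, yielding the slightly stronger \emph{split} form $\cU^*\otimes\cU[-1]\oplus p^*\Omega_X$ rather than just a triangle. For the statement as posed --- and for the K-theory class, virtual dimension, quasi-isomorphism invariance and virtual cycle --- your filtration triangle suffices, so nothing is missing.
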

 
This is a rewriting of parts of \cite[Chapter 14]{Fu} in the language of \cite{BF}. It is useful if we recognise a moduli space $M$ as having the same perfect obstruction theory as a $\wt D_r$. Then invariants defined by integration against $[M]^{\vir}$ can be calculated more easily on the smooth space $X$ \emph{if} the integrand can be expressed as a pullback from $X$.

The drawback is that the map $\wt D_r\to X$ contracts\footnote{\label{fn}Except when $D_r$ is the \emph{deepest} degeneracy locus --- i.e. when $D_{r+1}=\emptyset$. Then $\wt D_r\cong D_r$ and $\wt D_r\to X$ is an embedding. This is the case studied in \cite{GT1}.} the exceptional locus of $\wt D_r\to D_r$, so most integrands (such as the one in \eqref{expan} below) will not be pullbacks from $X$. We could instead work in $\Gr\;(r,E_0)$, to which $[\wt D_r]^{\vir}$ pushes forward to give
$c_{re_1}(\cU^*\otimes p^*E_1)$, but this description is not an invariant of the quasi-isomorphism class of  $E_\bullet$.

However, in examples like (\ref{**}, \ref{embedB}) below each kernel $\ker\sigma_x=h^0(E_\bullet|_x)$ naturally embeds in (the fibre over $x$ of) a certain vector bundle $B\to X$,
\beq{Bembed}
h^0(E_\bullet|_x)\ \subseteq\ B_x \quad\forall x\in X.
\eeq
The global condition is that $B^*$ surjects onto $\coker(\s^*)=h^0(E_\bu^\vee)$ over $D_r$ \eqref{Esurj}. Then $\wt D_r$ naturally embeds in $\Gr\;(r,B)/X$, and we can push forward the virtual class.

\begin{thm*} \label{main} Under the embedding
$\iota\colon\,\wt D_r(E_\bu ) \into \Gr\;(r,B)$
the pushforward of the virtual cycle to $A_{\vd}\big(\!\op{Gr}(r,B)\big)$is given by 
\beq{DeltaBX}
\iota_*\big[\wt D_r(E_\bullet)\big]^{\vir}\=\Delta^{r}_{\,\op{rk}(B)-\op{rk}(E_\bu)}\big(c\;(\cc Q- q^*E_\bu)\big),
\eeq
where $\cc Q$ is the universal quotient bundle over $q\colon\!\Gr\;(r,B)\to X$.
\end{thm*}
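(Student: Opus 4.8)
The plan is to realise $\iota\colon\wt D_r(E_\bu)\into\Gr(r,B)$ as an instance of the resolution construction \eqref{resl}, carried out over the smooth base $\Gr(r,B)$ in place of $X$, and then to read \eqref{DeltaBX} off from the Thom--Porteous formula of the Proposition. Write $Y:=\Gr(r,B)$, let $q\colon Y\to X$ be the projection and $0\to\cS\to q^*B\to\cc Q\to0$ the tautological sequence, and let $b\colon E_0\to B$ be the bundle morphism implementing the embedding (\ref{Bembed}, \ref{Esurj}). First I would introduce the two-term complex on $Y$
$$
F_\bu\ :=\ \big[\,q^*E_0\ \xr{(q^*\sigma,\ \pi)}\ q^*E_1\oplus\cc Q\,\big],\qquad \pi:=\big(q^*E_0\xr{q^*b}q^*B\twoheadrightarrow\cc Q\big),
$$
placed in degrees $0$ and $1$. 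From the exact sequence of complexes $0\to\cc Q[-1]\to F_\bu\to q^*E_\bu\to0$ one reads off $\rk(F_\bu)=\rk(E_\bu)-\rk(B)+r$, so that $r-\rk(F_\bu)=\rk(B)-\rk(E_\bu)$, and $c(-F_\bu)=c(\cc Q)\,c(q^*E_1)/c(q^*E_0)=c(\cc Q-q^*E_\bu)$. Hence the Proposition, applied to the complex $F_\bu$ over the smooth variety $Y$, outputs precisely the class $\Delta^r_{\rk(B)-\rk(E_\bu)}\big(c(\cc Q-q^*E_\bu)\big)\in A_{\vd}(Y)$ on the right of \eqref{DeltaBX}, and everything comes down to identifying $\wt D_r(F_\bu)$, with its obstruction theory, with $\wt D_r(E_\bu)$.

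For the scheme-level identification: since $\Gr(r,F_0)=\Gr(r,q^*E_0)=\Gr(r,E_0)\times_XY$, the scheme $\wt D_r(F_\bu)$ is cut out there as the zero locus of $\cV\into\rho^*q^*E_0\xr{(q^*\sigma,\pi)}\rho^*(q^*E_1\oplus\cc Q)$ (cf.\ \eqref{vanloc}). Over a point $(x,V,U)$ this asks that $V\subseteq\ker\sigma_x=h^0(E_\bu|_x)$ and that $b(V)\subseteq U$; since $b$ is injective on $h^0(E_\bu|_x)$ by \eqref{Bembed} and $\dim U=r$, this forces $b$ to restrict to an isomorphism $\cV\xr{\sim}\rho^*\cS$ over $\wt D_r(F_\bu)$, whence $U=b(V)$ and $V$ is pinned down. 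So the two projections are mutually inverse isomorphisms between $\wt D_r(F_\bu)$ and $\wt D_r(E_\bu)=\{(x,V):V\subseteq\ker\sigma_x\}\subset\Gr(r,E_0)$ of \eqref{resl}, and under this identification the structure map $\wt D_r(F_\bu)\to Y$ is exactly $\iota$.

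The one substantive step is to match the perfect obstruction theories: that the one the Proposition puts on $\wt D_r(F_\bu)$ (built from $\rho^*T_Y$, $\cV$ and $\Cone(\cV\to\rho^*F_\bu)$) coincides, under the identification above, with the one it puts on $\wt D_r(E_\bu)$ (built from $p^*T_X$, $\cU$ and $\Cone(\cU\to p^*E_\bu)$). Here $\cV$ goes to $\cU$ and to $\rho^*\cS$, and $\Cone(\cV\to\rho^*q^*E_\bu)$ goes to $\Cone(\cU\to p^*E_\bu)$; the triangle $\cc Q[-1]\to\Cone(\cV\to\rho^*F_\bu)\to\Cone(\cV\to\rho^*q^*E_\bu)$ coming from $0\to\cc Q[-1]\to F_\bu\to q^*E_\bu\to0$ contributes, after applying $\cV^*\otimes(-)[1]$, an extra summand $\cU^*\otimes\cc Q$, and this is precisely the subbundle $\cS^*\otimes\cc Q$ appearing in the relative tangent sequence $0\to\cS^*\otimes\cc Q\to T_Y\to q^*T_X\to0$ of $q$, leaving the quotient $q^*T_X=p^*T_X$ to contribute just as on the $E_\bu$-side. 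Keeping track of the maps identifies the two obstruction-theory complexes on the nose; this diagram chase is where I expect the real work to be.

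Granting the above, $[\wt D_r(F_\bu)]^{\vir}$ is carried to $[\wt D_r(E_\bu)]^{\vir}$, and pushing it forward along the structure map $\wt D_r(F_\bu)\to Y=\Gr(r,B)$ --- which is $\iota$ --- the Proposition gives
$$
\iota_*\big[\wt D_r(E_\bu)\big]^{\vir}\=\Delta^r_{r-\rk(F_\bu)}\big(c(-F_\bu)\big)\=\Delta^r_{\rk(B)-\rk(E_\bu)}\big(c(\cc Q-q^*E_\bu)\big),
$$
which is \eqref{DeltaBX}.
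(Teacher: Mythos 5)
Your argument is correct in substance, and it is genuinely different from the paper's main proof of this theorem (Theorem~\ref{bar}), which works in the fibre product $\Gr\;(r,E_0)\times_X\Gr\;(r,B)$, uses Lemma~\ref{s*s*} to realise the section $s$ of \eqref{dgr} as a regular embedding, and then pushes down via flat basechange, the projection formula and \cite[Proposition 14.2.2]{Fu}. What you do instead is, in effect, the paper's own ``Digression: description as a deepest degeneracy locus'' (culminating in Theorem~\ref{deep}): your $F_\bu = [\,q^*E_0\to q^*E_1\oplus\cc Q\,]$ is exactly the paper's map $\tau$ of \eqref{tor}, your $\wt D_r(F_\bu)$ is its deepest degeneracy locus $D_r(\tau)$ (the two coincide since $D_{r+1}(\tau)=\emptyset$), and your ``apply Proposition~\ref{mainp} to $F_\bu$ over $\Gr\;(r,B)$'' is the paper's observation that $D_r(\tau)$ carries the same virtual cycle. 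Two remarks, one a small gap, one a simplification. \textbf{Gap:} the bundle morphism $b\colon E_0\to B$ you write down without comment does not exist in general --- condition \eqref{Esurj} only gives a sheaf surjection $B^*|_{D_r}\twoheadrightarrow h^0(E_\bu^\vee)|_{D_r}$, not a lift $\psi\colon B^*\to E_0^*$. You need the stronger hypothesis \eqref{Esurj2} together with the Jouanolou trick of Section~\ref{Jtrick} (cf.\ \eqref{Lift}) to pass to an affine bundle on which $B^*$ becomes projective and the lift exists; because this affine bundle replacement induces an isomorphism on Chow groups \eqref{J=}, the Chern class formula descends. So your route proves Theorem~\ref{main} under the mild extra hypothesis \eqref{Esurj2}, which the paper's diagram \eqref{dgr} argument avoids. \textbf{Simplification:} the ``one substantive step'' of matching obstruction theories on the nose is unnecessary --- you only need equality of the K-theory classes of the virtual cotangent bundles (which you essentially compute; both equal the restriction of $\Omega_X + \cU\otimes(E_\bu^\vee-\cU^*)$), since by Siebert's formula the virtual cycle depends on the perfect obstruction theory only through that class. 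This is exactly the shortcut the paper itself takes in the Digression.
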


\medskip\noindent\textbf{Nested Hilbert schemes.}
For simplicity consider first \emph{2-step} Hilbert schemes
of nested subschemes (of dimensions 0 and 1) of a fixed smooth projective surface $S$. Given $\b\in H^2(S,\bb Z)$ and integers $n_1,n_2\ge0$ we set
\beq{Sn1n2b}
\S{n_1,n_2}_\b \ := \big\{I_1(-D)\subseteq I_2\subseteq\cO_S\ \colon\ [D]=\b,\ \mathrm{length}\;(\cO_S/I_i)=n_i \big\}.
\eeq
When  $n_1=0=n_2$ we use $S_\b=S_\b^{[0,0]}$ to denote the Hilbert scheme of divisors in class $\b$. Conversely when $\b=0$ we get the nested Hilbert scheme of points $\S{n_1,n_2}$ studied in \cite{GT1}; we review this briefly first.

\medskip\noindent\textbf{Nested Hilbert schemes of points.}
For simplicity we set $H^{\ge1}(\cO_S)=0$ for now.
For points $I_1,I_2\subset\cO_S$ of $S^{[n_1]},\,S^{[n_2]}$ we have
\beq{homI1I2}
\Hom(I_1,I_2)\=\left\{\!\!\begin{array}{ll} \C & I_1\subseteq I_2, \\
0 & I_1\not\subseteq I_2. \end{array}\right.
\eeq
Therefore $\iota\colon S^{[n_1,n_2]}\Into S^{[n_1]}\times S^{[n_2]}$ is the degeneracy locus $D_1(E_\bu)$ of the 2-term complex\footnote{This uses our simplifying assumption $H^{\ge1}(\cO_S)=0$. (For general $S$ we modify $E_\bu$ in \cite{GT1}, removing $H^{\ge1}(\cO_S)$ terms to make it 2-term.) The notation will be defined in Section \ref{nHs}; the $\cI_i$ are the universal ideal sheaves on $S\times S^{[n_1]}\times S^{[n_2]}$ and $\pi$ denotes the projection down $S$ to $S^{[n_1]}\times S^{[n_2]}$.} of vector bundles
$$
E_\bu\=R\hom_\pi(\cI_1,\cI_2)\quad\mathrm{over}\quad \S{n_1}\times \S{n_2}
$$
which, when restricted to the point $(I_1,I_2)$, has 0\;th cohomology  \eqref{homI1I2}. Moreover, \eqref{homI1I2} shows all higher degeneracy loci are empty, so we are in the situation of Footnote \ref{fn} with $r=1$. Thus the Thom-Porteous formula gives
$$
\iota_*\big[S^{[n_1,n_2]}\big]^{\vir}\=c_{n_1+n_2}\big(R\hom_\pi(\cI_1,\cI_2)[1]\big)\cap\big[S^{[n_1]}\times S^{[n_2]}\big].
$$
In \cite{GT1} we also showed the relevant perfect obstruction theory agrees (in K-theory at least) with that coming from Vafa-Witten theory. Therefore\footnote{Siebert's formula \cite[Theorem 4.6]{S} for the virtual class shows it depends on the perfect obstruction theory only through the K-theory class of its virtual tangent bundle.} Vafa-Witten invariants can be written as integrals over the smooth ambient space $S^{[n_1]}\times S^{[n_2]}$, as exploited to great effect in \cite{La1, La2} for instance.

\medskip\noindent\textbf{Nested Hilbert schemes of curves and points.}
When $\beta\ne0$ we consider
\beq{homI1I2D}
\Hom(I_1(-D),I_2) \quad\mathrm{for}\ D\mathrm{\ in\ class\ }\beta.
\eeq
In contrast to \eqref{homI1I2} this can become arbitrarily big, with different elements corresponding (up to scale) to different nested subschemes of $S$. Therefore, the corresponding nested Hilbert scheme $\S{n_1,n_2}_\b$ \emph{dominates} the degeneracy locus $D_1(E_\bu)$ of the complex of vector bundles
\beq{rhompi}
E_\bu\=R\hom_\pi(\cI_1(-\cD),\cI_2)\quad\mathrm{over}\quad \S{n_1}\times \S{n_2},
\eeq
which, when restricted to the point $(I_1,I_2,\cO(D))$, has 0\;th cohomology \eqref{homI1I2D}. Since a point of the nested Hilbert scheme $\S{n_1,n_2}_\b$ is a one dimensional subspace of the kernel \eqref{homI1I2D} of the complex \eqref{rhompi}, we see that
$$
\S{n_1,n_2}_\b\=\wt D_1(E_\bu)
$$
is the virtual resolution of the degeneracy locus $D_1(E_\bu)$ with $r=1$.
Since
\beq{**}
\Hom(I_1(-D),I_2)\ \subseteq\ H^0(\cO(D)),
\eeq
the $0$\;th cohomology of $E_\bu$ embeds in $H^0(\cO(D))$. So we get an example of \eqref{Bembed} with $r=1$ and $B$ the trivial vector bundle with fibre $H^0(\cO(D))$. The embedding $\wt D_1(E_\bu)\into\PP(B)$ of Theorem \ref{main} then becomes the following.

\begin{thm*}\label{ez} Suppose $H^{\ge1}(\cO_S)=0$ for now. Then $S^{[n_1,n_2]}_\b$ has a virtual cycle of dimension $n_1+n_2+h^0(\cO(D))-1$ which, under the embedding
$$
\iota\,\colon\ S^{[n_1,n_2]}_\b\Into S^{[n_1]}\times S^{[n_2]}\times\PP\big(H^0(\cO(D))\big),
$$
pushes forward to
$$
\iota_*\big[S^{[n_1,n_2]}_\b\big]^{\vir}\=
c\_{n_1+n_2}\Big(\!-\cO_{\PP(H^0(\cO(D)))}(-1)-R\hom_\pi(\cI_1,\cI_2(D))\Big).
$$
\end{thm*}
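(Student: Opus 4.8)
The plan is to specialise Theorem~\ref{main} to $r=1$ and unwind the Thom--Porteous class it produces. As recorded before the statement, $\S{n_1,n_2}_\b=\wt D_1(E_\bu)$ for the $2$-term complex of vector bundles $E_\bu=R\hom_\pi(\cI_1(-\cD),\cI_2)$ over $X=\S{n_1}\times\S{n_2}$; here $\cO(D)$ is a fixed line bundle on $S$ with $c_1=\b$, pulled back to $S\times X$, so that $E_\bu\cong R\hom_\pi(\cI_1,\cI_2(D))$ by the projection formula. By \eqref{**} the kernel $h^0(E_\bu|_x)=\Hom(I_1(-D),I_2)$ lies in $B_x=H^0(\cO(D))$ for the trivial bundle $B=H^0(\cO(D))\otimes\cO_X$ of rank $h^0(\cO(D))$. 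First I would check the hypothesis \eqref{Esurj}: the sheaf map $h^0(E_\bu)\to B$ globalising \eqref{**} is induced by $\cI_2\into\cO$ and is injective over $D_1$, so its dual $B^*\to h^0(E_\bu^\vee)=\coker(\sigma^*)$ is surjective there. Theorem~\ref{main} then applies with $\iota\colon\S{n_1,n_2}_\b\into\Gr(1,B)=\PP(B)=X\times\PP(H^0(\cO(D)))$, giving
\[
\iota_*\big[\S{n_1,n_2}_\b\big]^{\vir}=\Delta^{1}_{\,h^0(\cO(D))-\op{rk}(E_\bu)}\big(c(\cc Q-q^*E_\bu)\big),
\]
with $\cc Q$ the universal quotient on $q\colon\PP(B)\to X$.

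The next step is formal. Since $\Delta^1_m(c)=c_m$, the right-hand side is $c_{h^0(\cO(D))-\op{rk}(E_\bu)}(\cc Q-q^*E_\bu)$. The tautological sequence $0\to\cO_{\PP(B)}(-1)\to q^*B\to\cc Q\to0$ together with the triviality of $B$ gives $c(\cc Q)=c(q^*B)/c(\cO_{\PP(B)}(-1))=c(-\cO_{\PP(B)}(-1))$, so for the purpose of Chern classes $\cc Q-q^*E_\bu$ can be replaced by the $K$-theory class $-\cO_{\PP(B)}(-1)-q^*E_\bu$, with $\cO_{\PP(B)}(-1)$ pulled back from $\PP(H^0(\cO(D)))$; and $q^*E_\bu\cong R\hom_\pi(\cI_1,\cI_2(D))$ as above. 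Riemann--Roch on the surface fibres computes $\op{rk}(E_\bu)=\chi_S(I_1(-D),I_2)=\chi(\cO(D))-n_1-n_2$, so the index $h^0(\cO(D))-\op{rk}(E_\bu)$ equals $n_1+n_2$ once $h^0(\cO(D))=\chi(\cO(D))$, i.e. $H^{\ge1}(\cO(D))=0$ (in general $h^0(\cO(D))$ is to be read as $\chi(\cO(D))$ throughout), and $\vd=\dim X-(e_1-e_0+1)=2(n_1+n_2)-1+\op{rk}(E_\bu)=n_1+n_2+h^0(\cO(D))-1$. Substituting gives the stated class.

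The hard part is not this deduction --- which is book-keeping once Theorem~\ref{main} is available --- but the inputs it rests on: that $\S{n_1,n_2}_\b$ is $\wt D_1(E_\bu)$ \emph{with the right scheme structure}, and that $E_\bu=R\hom_\pi(\cI_1(-\cD),\cI_2)$ may be represented by a $2$-term complex of bundles (both carried out in Section~\ref{nHs}, following \cite{GT1}), together with the bilinear Riemann--Roch computation of $\op{rk}(E_\bu)$. The verification of \eqref{Esurj} and the reconciliation of $h^0(\cO(D))$ with $\chi(\cO(D))$ are the only other points requiring attention, and neither is serious.
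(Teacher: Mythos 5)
Your proposal follows the paper's route: identify $\S{n_1,n_2}_\b$ with $\wt D_1(E_\bu)$ for $E_\bu=R\hom_\pi(\cI_1(-\cD),\cI_2)\cong R\hom_\pi(\cI_1,\cI_2(D))$ over $X=\S{n_1}\times\S{n_2}$, then apply Theorem~\ref{bar}/\ref{main} with $r=1$ and the trivial bundle $B=H^0(\cO(D))\otimes\cO_X$. You use formula~\eqref{DeltaBX} directly and simplify $c(\cQ)$ via triviality of $B$, which produces exactly the stated form; the paper's formal path is through Theorem~\ref{robs} (specializing to $A=\emptyset$), which lands on the form of \eqref{leo} and then invokes the agreement of the two formulae in Theorem~\ref{bar}. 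Your version is the more direct derivation of the particular shape appearing in Theorem~\ref{ez}. The Riemann--Roch bookkeeping for $\rk E_\bu=\chi(\cO(D))-n_1-n_2$ and the virtual dimension is correct, and you rightly flag that $h^0(\cO(D))$ must be read as $\chi(\cO(D))$ (i.e.\ one needs $H^{\ge1}(\cO(D))=0$), a point the theorem's statement is cavalier about.

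The one place I'd push back is the verification of \eqref{Esurj}. You write that the map $h^0(E_\bu)\to B$ induced by $\cI_2\into\cO$ is injective over $D_1$ ``so its dual $B^*\to h^0(E_\bu^\vee)=\coker(\s^*)$ is surjective.'' But $h^0(E_\bu)=\ker\s$ and $h^0(E_\bu^\vee)=\coker(\s^*)$ are not literal duals of one another (there is only a canonical pairing map $\coker(\s^*)\to\ker(\s)^*$, not an isomorphism), and in any case dualising an injection of coherent sheaves does not yield a surjection unless the sheaves are locally free. What is needed, and what the paper actually constructs in \eqref{Bsuj}, is an explicit surjection of sheaves $B^*\onto{}\coker(\s^*)$, produced via relative Serre duality: $B^*\cong R^2\pi_*(\cL_\b^*\otimes K_S)\cong R^2\pi_*(\cI_1\otimes\cL_\b^*\otimes K_S)\onto{}\ext^2_\pi(\cI_2,\cI_1\otimes\cL_\b^*\otimes K_S)=\coker(\s^*)$, the last arrow being the edge surjection of the local-to-global spectral sequence. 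This is a short argument, but it is not a formal dualisation of the fibrewise inclusion \eqref{**}. Everything else in your proposal is sound and matches the paper's proof; as you note, the substance lies in the inputs established in Section~\ref{nHs}.
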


\medskip\noindent\textbf{Reduced virtual cycle.} For general $S$ with possibly nonzero $H^{\ge1}(\cO_S)$ we work instead with
\beq{rhompi2}
E_\bu\=R\hom_\pi(\cI_1(-\cL_\beta),\cI_2)\quad\mathrm{over}\quad X:=\S{n_1}\times \S{n_2}\times \Pic_\b(S),
\eeq
which, when restricted to the point $(I_1,I_2,\cO(D))$, has 0\;th cohomology \eqref{homI1I2D}. Here $\cL_\beta$ is a Poincar\'e line bundle over $S\times \Pic_\b(S)$, normalised (by tensoring by the pullback of $\cL_\b^{-1}|_{\{x\}\times\Pic_\b(S)}$ if necessary) so that $\cL_\b|_{\{x\}\times\Pic_\b(S)}$ is trivial on some fixed basepoint $x\in S$.

Again a point of $\S{n_1,n_2}_\b$ is a one dimensional subspace of the kernel \eqref{homI1I2D} of the complex \eqref{rhompi2}, so
$$
\S{n_1,n_2}_\b\=\wt D_1(E_\bu)
$$
is the virtual resolution of the degeneracy locus $D_1(E_\bu)$ with $r=1$. This gives a description of $S_\beta^{[n_1,n_2]}$ as a projective cone over $S^{[n_1]}\times S^{[n_2]}\times\Pic_\beta(S)$; see Proposition \ref{props}, generalising \cite[Lemma 2.15]{DKO}.

We can no longer take $B$ to have fibre $H^0(L)$ over $L\in\Pic_\b(S)$ since this may jump in dimension. Instead we fix a sufficiently ample divisor $A\subset S$ and let $B$ be the bundle with fibre $H^0(L(A))$:
$$
B\ :=\ \pi_*\big(\cL_\beta(A)\big) \qquad\mathrm{over\ }X.
$$
Therefore the inclusions
\beq{embedB}
\Hom(I_1(-L),I_2)\ \subseteq\ H^0(L)\ \subseteq\ H^0(L(A))
\eeq
give the required embeddings $h^0(E_\bu|_x)\subset B_x$ of \eqref{Bembed}. Thus $\wt D_1\subset\PP(B)$ such that $\cO_{\PP(B)}(-1)$ restricts to $\cU\to\wt D_1$, and Theorem \ref{main} gives the following.

\begin{thm*}\label{pushBX} Fix a surface $S$ and $\beta\in H^2(S,\Z)$.

$\bullet$ If $H^2(L)=0$ for all \emph{effective} $L\in\Pic_\beta(S)$ then $\S{n_1,n_2}_\b\cong\wt D_1(E_\bu)$ inherits a \emph{reduced} virtual cycle of dimension
$\chi(L)+n_1+n_2+h^1(\cO_S)-1$.

$\bullet$ If $H^2(L)=0$ for \emph{all} $L\in\Pic_\beta(S)$
then via $\iota\colon\S{n_1,n_2}_\b=\wt D_1(E_\bu)\into\PP(B)$ it pushes forward to
$$
\iota_{*}\big[\S{n_1,n_2}_\b\big]^{\op{red}}\=c\_{n_1+n_2+d}\Big(q^*B(1)-R\hom_\pi\big(\cI_1,\cI_2\otimes\cc L_\b(1)\big)\Big),
$$
where $q\colon \PP(B)\to \S{n_1}\times \S{n_2}\times \Pic_\b(S)$ is the projection, $\cO(1):=\cO_{\bb P(B)}(1)$ and $d=\frac12A.(2\b+A-K_S)=\chi(L(A))-\chi(L)$ for any $L\in\Pic_\beta(S)$.
\end{thm*}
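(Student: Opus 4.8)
The plan is to identify $S^{[n_1,n_2]}_\beta$ with the virtual resolution $\wt D_1(E_\bu)$ of the complex \eqref{rhompi2}, apply Theorem \ref{main} with $r=1$, and then rewrite the resulting Thom--Porteous class. The genuinely new content is the identification --- together with its scheme structure and obstruction theory --- and the recognition of the output as the \emph{reduced} class; after that the formula is pure Chern-class bookkeeping.

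A $\C$-point of $\wt D_1(E_\bu)$ is a tuple $(I_1,I_2,L)\in X$ together with a line $\C\phi\subseteq\Hom(I_1(-L),I_2)=h^0\big(E_\bu|_{(I_1,I_2,L)}\big)$; as observed after \eqref{rhompi2} such a $\phi$ is injective, so $I_1(-L)\xr{\phi}I_2\hookrightarrow\cO_S$ defines a nested subscheme of the correct numerical type, and conversely every nested subscheme arises this way. I would match the scheme structures via the description of $\wt D_1$ (the footnote to \eqref{vanloc}) as the relative Grassmannian of rank-one quotients of $\coker(\sigma^*)$ --- that is, via the cone description of Proposition \ref{props}. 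Since $\Hom(I_1(-L),I_2)\ne0$ forces $L$ to be effective, the hypothesis on $H^2$ of effective line bundles forces, by Serre duality on $S$, $\Ext^2(I_1(-L),I_2)\cong\Hom\big(I_2,I_1(K_S-L)\big)^*=0$ at every point of $\wt D_1$ (indeed $\Hom\big(I_2,I_1(K_S-L)\big)\subseteq\Hom\big(I_2,\cO_S(K_S-L)\big)=H^0(K_S-L)=H^2(L)^*$). Hence $E_\bu$ --- after the $H^{\ge1}(\cO_S)$-removing modification of \cite{GT1}, which affects neither $D_1$ nor $\wt D_1$ --- is effectively $2$-term near $\wt D_1$ and the Proposition applies. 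Its virtual dimension is $\vd=\dim X-(e_1-e_0+1)$ with $\dim X=2n_1+2n_2+h^1(\cO_S)$ (using $\dim\Pic_\beta(S)=h^1(\cO_S)$) and $e_1-e_0=-\op{rk}(E_\bu)=-\chi\big(I_1(-L),I_2\big)=n_1+n_2-\chi(L)$ by Riemann--Roch on $S$, so $\vd=\chi(L)+n_1+n_2+h^1(\cO_S)-1$. That this is the reduced Vafa--Witten/DT class of \cite{GSY1,TT1,GSY2} follows by comparing the virtual tangent complex $p^*T_X-\cU^*\otimes\Cone(\cU\to p^*E_\bu)[1]$ of the Proposition with the reduced obstruction theory of \cite{GSY1} in $K$-theory --- as for the points case in \cite{GT1}, with $\Pic_\beta(S)$ now supplying the $H^1(\cO_S)$-directions and the $H^2$-vanishing deleting the remaining trivial summand --- and then invoking Siebert's formula \cite{S}.

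For the second bullet, assume $H^2(L)=0$ for \emph{all} $L\in\Pic_\beta(S)$, so (by the computation above) $E_\bu$ is $2$-term over all of $X$; fix $A$ ample enough that $H^{\ge1}(L(A))=0$ for every $L$ (Serre vanishing over the proper base $\Pic_\beta(S)$), so that cohomology-and-base-change makes $B=\pi_*(\cL_\beta(A))$ a vector bundle with $\op{rk}(B)=h^0(L(A))=\chi(L(A))=\chi(L)+d$, and dualising the inclusions \eqref{embedB} (using $H^{\ge1}(L(A))=0$) yields the surjection $B^*\twoheadrightarrow\coker(\sigma^*)$ over $D_1$ required in \eqref{Esurj}. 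Theorem \ref{main} now applies with $r=1$: here $\Gr(1,B)=\PP(B)$, $\cU=\cO_{\PP(B)}(-1)$, $\cQ=q^*B/\cO(-1)$ and $\Delta^1_b(c)=c_b$, whence
$$
\iota_*\big[S^{[n_1,n_2]}_\beta\big]^{\op{red}}\=c_{\,\op{rk}(B)-\op{rk}(E_\bu)}\big(\cQ-q^*E_\bu\big)\=c_{\,n_1+n_2+d}\big(q^*B-\cO(-1)-q^*E_\bu\big).
$$
Writing $w:=q^*B-q^*E_\bu$, which has rank exactly $N:=n_1+n_2+d$, both $c_N\big(w-\cO(-1)\big)$ and $c_N\big(w\otimes\cO_{\PP(B)}(1)\big)$ equal the degree-$N$ component of $c(w)\cdot\sum_{k\ge0}c_1\big(\cO_{\PP(B)}(1)\big)^k$ and therefore coincide; and since $E_\bu=R\hom_\pi(\cI_1(-\cL_\beta),\cI_2)=R\hom_\pi(\cI_1,\cI_2\otimes\cL_\beta)$, the projection formula for $\pi$ identifies $w\otimes\cO_{\PP(B)}(1)$ with $q^*B(1)-R\hom_\pi\big(\cI_1,\cI_2\otimes\cL_\beta(1)\big)$, giving the asserted formula.

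The main obstacle is the first step --- establishing $S^{[n_1,n_2]}_\beta\cong\wt D_1(E_\bu)$ scheme-theoretically and checking that the Proposition's perfect obstruction theory agrees in $K$-theory with the reduced one of \cite{GSY1,TT1}; granting this, the second bullet follows by applying Theorem \ref{main} and the elementary Chern-class identities above.
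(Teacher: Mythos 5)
Your proof is correct and follows essentially the same route as the paper: identify $S^{[n_1,n_2]}_\beta$ with $\wt D_1(E_\bu)$ via the projective cone description (Proposition \ref{props}), use the $H^2$-vanishing to show $E_\bu$ is $2$-term near (or, for the second bullet, over all of) $X$, invoke Proposition \ref{vcb} for the obstruction theory, and then apply Theorem \ref{bar} with $r=1$ for the pushforward. Your rank and dimension bookkeeping ($\op{rk}(E_\bu)=\chi(L)-n_1-n_2$, $\op{rk}(B)=\chi(L)+d$, hence $\vd=\chi(L)+n_1+n_2+h^1(\cO_S)-1$) matches the paper, and your derivation of the twisted form $c_{n_1+n_2+d}\big(q^*B(1)-R\hom_\pi(\cI_1,\cI_2\otimes\cL_\beta(1))\big)$ from $c_{n_1+n_2+d}(\cQ_B-q^*E_\bu)$ is exactly what \eqref{leo} packages (via the Manivel-type twisting identity, valid because $w:=q^*B-q^*E_\bu$ has rank precisely $n_1+n_2+d$).

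One small correction: the remark that $E_\bu$ becomes $2$-term ``after the $H^{\ge1}(\cO_S)$-removing modification of \cite{GT1}'' is misplaced. That modification belongs to the points-only case over $S^{[n_1]}\times S^{[n_2]}$, where $\Ext^{\ge1}(\cO,\cO)=H^{\ge1}(\cO_S)$ introduces extra terms into $R\hom_\pi(\cI_1,\cI_2)$. Here the base is $X=S^{[n_1]}\times S^{[n_2]}\times\Pic_\beta(S)$, the complex is $E_\bu=R\hom_\pi(\cI_1,\cI_2\otimes\cL_\beta)$, and the $H^1(\cO_S)$-direction is already accounted for by the $\Pic_\beta(S)$ factor. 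No such modification is performed (nor needed): one simply takes the stupid truncation of a $3$-term resolution, which is quasi-isomorphic to the whole complex precisely over the open set where $h^2(E_\bu)$ vanishes --- and that is what your $H^2(L)=0$ argument (correctly) establishes. The reference to \cite{GT1}'s modification is a red herring, but it does not affect the substance of your proof.

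Finally, for the first bullet you should state explicitly that the $h^2$-vanishing at points of $\wt D_1$ propagates to an open neighbourhood $U\subset X$ of its image by upper semicontinuity; this is needed to have a perfect (i.e.\ $2$-term) obstruction theory on $\wt D_1$, not just pointwise vanishing. The paper makes this explicit; you leave it implicit.
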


\noindent The $H^2(L)=0$ condition ensures that $E_\bu$ is 2-term. (We handle $H^2(L)\ne0$ in the next Section.)
When $n_1=0$, this theorem was proven in \cite[Theorem A.7]{KT1}. Here --- as in \cite{KT1, GSY1} --- we call the virtual cycle the \emph{reduced} cycle $\big[\S{n_1,n_2}_\b\big]^{\mathrm{red}}$ since it differs from the one appearing in Vafa-Witten theory\footnote{Or indeed reduced DT theory, confusingly. Removing \emph{one} copy of $H^2(\cO_S)$ from the DT obstruction space of $\S{n_1,n_2}_\b$ gives the Vafa-Witten/reduced DT obstruction theory. Removing \emph{two} copies gives the degeneracy locus obstruction theory of Proposition \ref{mainp}.}  by an $H^2(\cO_S)$ term in the obstruction theory.

The apparent difference between the formulae in Theorems \ref{ez} and \ref{pushBX} will be explained by the formula \eqref{leo} in Theorem \ref{bar}.

\medskip\noindent\textbf{Non-reduced virtual cycle.} In general, when $H^2(L)$ need not vanish, $E_\bu$ \eqref{rhompi2} is a 3-term complex with nonzero $h^2(E_\bu)$ cohomology sheaf. So in order to apply our theory 
we use the \emph{splitting trick} from \cite[Section 6.1]{GT1}
to remove $R^2\pi_*\;\cO[-2]$ from $E_\bu(1)$ after pulling back to an affine bundle over $X$. (Since an affine bundle has the same Chow groups as its base, no information is lost from the virtual cycle.) 

We only manage this on a neighbourhood of $\S{n_1,n_2}_\b\subset \PP(B)$, making $E_\bu(1)$ a 2-term complex there. By Proposition \ref{mainp} this is enough to prove

\begin{thm*} This construction defines a virtual fundamental class
\beq{virclass}
\big[\S{n_1,n_2}_\b\big]^{\vir}\ \in\ A_{n_1+n_2+\vd(S_\beta)}\big(\S{n_1,n_2}_\b\big),
\eeq
where $\vd(S_\beta)=\frac12\beta.(\beta-K_S)$. It agrees with the virtual cycle of \cite{GSY1}.
\end{thm*}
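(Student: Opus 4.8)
The plan is to run the machinery of Proposition~\ref{mainp} on the $2$-term complex produced by the splitting trick, push the resulting virtual cycle down through the Chow isomorphism of the auxiliary affine bundle, and then identify the obstruction theory with that of \cite{GSY1} by a $K$-theory computation combined with Siebert's theorem.

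First I would set up the splitting trick of \cite[Section~6.1]{GT1} in this situation. Pulling $X=\S{n_1}\times\S{n_2}\times\Pic_\b(S)$ back along a suitable affine bundle $a\colon\wt X\to X$ --- a torsor under a vector bundle $W$ whose role is to trivialise the relevant extension class --- and correspondingly pulling $\PP(B)$ back to $\PP(a^*B)$, the complex $a^*\big(E_\bu(1)\big)$ becomes quasi-isomorphic, over the preimage of a Zariski neighbourhood $V$ of $\S{n_1,n_2}_\b\subset\PP(B)$, to $\ov E_\bu\oplus\big(R^2\pi_*\cO\otimes\cO(1)\big)[-2]$ with $\ov E_\bu$ a genuine $2$-term complex of vector bundles. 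One only controls this near $\S{n_1,n_2}_\b$, since away from the effective locus it picks out, neither $h^2(E_\bu(1))$ nor the splitting obstruction need behave well. As $\ov E_\bu$ agrees with $a^*(E_\bu(1))$ in degrees $0,1$ and $\S{n_1,n_2}_\b=\wt D_1(E_\bu)$ is cut out in $\PP(B)$ by the section $\cU\into q^*B\To\cdots$ of \eqref{vanloc}, the pullback $\wt{\mathcal N}\subset V$ of $\S{n_1,n_2}_\b$ is still the scheme $\wt D_1(\ov E_\bu)$, so Proposition~\ref{mainp} applies to it.

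Applying Proposition~\ref{mainp} with ambient smooth variety the open $V\subset\PP(a^*B)$ and $2$-term complex $\ov E_\bu$ endows $\wt{\mathcal N}$ with a perfect obstruction theory and a virtual cycle $[\wt{\mathcal N}]^{\vir}$. Since $a$ restricts to an affine bundle $\wt{\mathcal N}\to\S{n_1,n_2}_\b$, flat pullback is an isomorphism on Chow groups (shifting degree by $\op{rk}W$), and I would define $[\S{n_1,n_2}_\b]^{\vir}$ by the requirement $a^*[\S{n_1,n_2}_\b]^{\vir}=[\wt{\mathcal N}]^{\vir}$. Well-definedness then needs three now-standard checks: independence of the neighbourhood $V$ (two choices agree on an overlap still containing $\wt{\mathcal N}$, and the virtual cycle only sees the obstruction theory near $\wt{\mathcal N}$); independence of the affine bundle (pass to a common refining affine bundle and use that virtual pullback commutes with the Chow isomorphisms); and independence of the auxiliary ample divisor $A$, hence of $B$ (as in the proof of Theorem~\ref{pushBX}, enlarging $A$ replaces $B$ by an extension without changing the degeneracy-locus obstruction theory). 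For the dimension, Proposition~\ref{mainp} gives $\vd=\dim X-(e_1-e_0+1)$, and $\dim X=2n_1+2n_2+h^1(\cO_S)$ while $e_0-e_1=\op{rk}(\ov E_\bu)=\chi\big(\cI_1(-\cL_\b),\cI_2\big)-h^2(\cO_S)=1-h^1(\cO_S)+\vd(S_\b)-n_1-n_2$ by Riemann--Roch (with $\vd(S_\b)=\tfrac12\b.(\b-K_S)$), so $\vd=n_1+n_2+\vd(S_\b)$; the $\op{rk}W$ shift introduced by $a$ cancels against the one in the definition above.

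Finally, to identify this with the virtual cycle of \cite{GSY1} I would invoke Siebert's theorem \cite[Theorem~4.6]{S}: the virtual class depends on the perfect obstruction theory only through the $K$-theory class of its virtual tangent bundle, so it suffices to check that $T^{\vir}$ of the theory above coincides (after pulling back along $a$) with the one \cite{GSY1} use to build the reduced-DT $=$ Vafa-Witten obstruction theory of $\S{n_1,n_2}_\b$. Reading this off Proposition~\ref{mainp} --- $T^{\vir}=p^*T_X-\cO+\cU^*\otimes p^*\ov E_\bu$ on $\wt{\mathcal N}$, plus the relative tangent $a^*W$ of the affine bundle --- one expands $T_X$ in terms of the $R\hom_\pi(\cI_i,\cI_i)$ and $R\pi_*\cO$, uses the tautological section $\cU\into q^*B$ and the quasi-isomorphism $\ov E_\bu\oplus\big(R^2\pi_*\cO\otimes\cO(1)\big)[-2]\simeq a^*(E_\bu(1))$ to rewrite everything in terms of $R\hom_\pi(\cI_1(-\cL_\b),\cI_2)$, and compares with \cite{GSY1}. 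I expect this last step to be the main obstacle: it requires unwinding the construction of \cite{GSY1} and a careful accounting of the $H^{\ge1}(\cO_S)$-terms --- in particular, checking that the copy of $R^2\pi_*\cO$ discarded by the splitting trick and the relative tangent $a^*W$ together leave exactly the single $H^2(\cO_S)$ that distinguishes the Vafa-Witten cycle from the more reduced cycle of Theorem~\ref{pushBX}. The earlier construction and descent steps are, by contrast, formal once Proposition~\ref{mainp} and the splitting trick of \cite{GT1} are in hand.
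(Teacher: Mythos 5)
Your plan captures the correct strategy --- truncate the three-term complex near $\S{n_1,n_2}_\b$ using the splitting trick, apply Proposition~\ref{mainp}, descend through the Jouanolou isomorphism, and then match virtual tangent bundles with \cite{GSY1} via Siebert's theorem --- and your dimension count and the well-definedness checklist are both fine. But there is a genuine gap at the technical heart of the argument: you \emph{assert} that $a^*(E_\bu(1))$ splits off an $R^2\pi_*\cO$-summand near $\S{n_1,n_2}_\b$ without producing the map $R^2\pi_*\cO[-2]\to E_\bu(1)$ that makes any modification possible. This map is not automatic: it is built by composing (a choice of splitting of) $R\pi_*\cO$ with the \emph{universal section} $s_\b$ of $\cL_\b(1)$ on $S\times S_\b$ from \eqref{univs}, then lifting through $R\pi_*(\cI_2\otimes\cL_\b(1))\to R\hom_\pi(\cI_1,\cI_2\otimes\cL_\b(1))$ as in \eqref{ddgg}, and finally \emph{extending} from $S^{[n_1]}\times S^{[n_2]}\times S_\b$ to an affine neighbourhood $U$ inside the Jouanolou cover of $\PP(B)$. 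This is precisely why the twist by $\cO_{\PP(B)}(1)$ and the passage to $\PP(B)$ are essential --- $s_\b$ is a section of $\cL_\b(1)$, not $\cL_\b$, so no analogous map exists over $X$ --- and why the construction only works on a neighbourhood where the surjectivities \eqref{sur} hold. None of this is ``a torsor under a vector bundle whose role is to trivialise the relevant extension class''; the Jouanolou trick only supplies the freedom to choose lifts and extensions of maps whose \emph{existence} must already be established.

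A second, smaller issue: what the paper does is not a direct-sum decomposition $a^*(E_\bu(1))\simeq\ov E_\bu\oplus(R^2\pi_*\cO)(1)[-2]$ but a \emph{cone}: it defines $\tau(E_\bu(1)):=\Cone(R^2\pi_*\cO[-2]\to E_\bu(1))$, shows $\tau(E_\bu(1))$ is $2$-term over $U$, and uses the fact that passing to the cone changes only $h^{\ge1}$ so the zero locus \eqref{blar} is still $\S{n_1,n_2}_\b$ (this is the first sentence of the paper's proof of Theorem~\ref{33}). Your summand should in any case be $R^2\pi_*\cO[-2]$, untwisted, so the $(1)$ in your displayed formula is also off. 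Once the map and the cone modification are in place, the rest of your argument (flat pullback along the affine bundle, Proposition~\ref{mainp}, and the Siebert/$K$-theory identification with \cite{GSY1} via \eqref{VWTvir}) goes through as you describe, and indeed those steps are the formal part.
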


\medskip\noindent\textbf{Computing virtual cycles by comparison.}
Since the above neighbourhood need not be all of $\PP(B)$, we get no useful push forward formula for \eqref{virclass} in general.\footnote{The exceptions are when $p_g(S)=0$ --- then \eqref{virclass} is the reduced virtual class of  Theorem \ref{pushBX} --- or when $H^2(L)=0$ for all effective $L\in\Pic_\b(S)$, so \eqref{virclass} is zero as the perfect obstruction theory which produced it can be reduced to give the class $\big[\S{n_1,n_2}_\b\big]^{\mathrm{red}}$.} This problem is down to the curve class $\beta$ alone --- if $\beta=0$ it does not arise, whereas if $n_1=0=n_2$ the problem already arises for the Hilbert scheme $S_\beta$ of pure curves in class $\beta$. However,
$$
[S_\beta]^{\vir}\,\in\,A_{\vd_\beta}(S_\beta), \qquad\vd_\b:=\vd(S_\beta)=\frac12\beta.(\beta-K_S),
$$
is a well-studied class that has been understood by other methods \cite{DKO,K} due to its importance in Seiberg-Witten theory. Separating out the parts of the obstruction theory governing the curve and the points, and applying the degeneracy locus technique to the latter only, we are able to prove the following comparison result. 

Let $\cD_\beta\subset S\times S_\beta$ denote the universal curve. We call
\beq{COdef}
\mathsf{CO}_\beta^{[n_1,n_2]}\ :=\ R\pi_*\;\cO(\cD_\b)-R\hom_\pi(\cI_1,\cI_2(\cD_\b))
\eeq
the Carlsson-Okounkov K-theory class \cite{CO} on $\S{n_1}\times \S{n_2}\times S_\b$.

\begin{thm*}\label{comparison} 
Pushing the virtual and reduced classes forwards along
$$
\iota \colon \S{n_1,n_2}_\b \Into\S{n_1}\times \S{n_2}\times S_\b\vspace{-2mm}
$$
gives
$$
\iota_*\big[\S{n_1,n_2}_\b\big]^{\vir}\=c_{n_1+n_2}\big(\mathsf{CO}_\beta^{[n_1,n_2]}\big) \cap\big[\S{n_1}\times \S{n_2}\big] \times  \big[S_\b\big]^{\vir}
$$
and, if $H^2(L)=0$ for all effective $L\in\Pic_\beta(S)$,
$$
\iota_*\big [\S{n_1,n_2}_\b\big]^{\op{red}}\=c_{n_1+n_2}\big(\mathsf{CO}_\beta^{[n_1,n_2]}\big) \cap \big[\S{n_1}\times\S{n_2}\big] \times \big[S_\b\big]^{\op{red}}.
$$
\end{thm*}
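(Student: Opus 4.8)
The plan is to exhibit $\iota\colon\S{n_1,n_2}_\b\Into Y$, $Y:=\S{n_1}\times\S{n_2}\times S_\b$, as a morphism carrying a perfect obstruction theory relative to $Y$ of virtual tangent $K$-theory class $-\iota^*\mathsf{CO}^{[n_1,n_2]}_\b$, compatible with the absolute obstruction theory of $\S{n_1,n_2}_\b$ and with the product obstruction theory of $Y$ (trivial on the $\S{n_i}$ factors, that of $S_\b$ on the last); then a virtual pullback and the excess-intersection formula for a virtual zero locus give the result. We work inside the smooth ambient space $\bar Y:=\S{n_1}\times\S{n_2}\times\PP(B)$, with $B=\pi_*\big(\cL_\b(A)\big)$ as in the body of the paper; set $F:=\pi_*\big(\cL_\b(A)|_A\big)$ (of rank $d=\chi(L(A))-\chi(L)$) and $F(1):=q^*F\otimes\cO(1)$, where $q\colon\bar Y=\PP(B)\to X$ and $\cO(1):=\cO_{\PP(B)}(1)$, so that $Y\subset\bar Y$ is cut out by a section of $F(1)$ as in \cite{DKO,KT1} and hence $[Y]^{\mathrm{red}}=[\S{n_1}\times\S{n_2}]\times\big(e(F(1))\cap[\PP(B)]\big)=[\S{n_1}\times\S{n_2}]\times[S_\b]^{\mathrm{red}}$; and $\bar Y$ contains $\S{n_1,n_2}_\b=\wt D_1(E_\bu)$ via the embedding of Theorem \ref{pushBX}. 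We also use $\cO(\cD_\b)\cong\cL_\b\otimes\cO(1)$ on $S\times S_\b$, which --- splitting off the ample twist via $0\to\cL_\b\to\cL_\b(A)\to\cL_\b(A)|_A\to0$ --- gives $q^*B(1)|_{S_\b}=R\pi_*\cO(\cD_\b)+F(1)$ in $K$-theory.

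\emph{Step 1: a $K$-theory identity.} By Proposition \ref{mainp} (with $r=1$, $\cU=\cO(-1)$), $T^{\vir}_{\S{n_1,n_2}_\b}=p^*T_X-\cO+\cU^*\otimes p^*E_\bu$, and the identifications above turn $\cU^*\otimes p^*E_\bu$ into $R\hom_\pi(\cI_1,\cI_2(\cD_\b))=R\hom_\pi(\cI_1(-\cD_\b),\cI_2)$ over $\S{n_1,n_2}_\b$. On the other hand $\iota^*T^{\vir}_Y=T_{\S{n_1}}\oplus T_{\S{n_2}}\oplus T^{\vir}_{S_\b}$; reading off the obstruction theory $T_{\PP(B)}|_{S_\b}-F(1)$ of $S_\b$ (cf.\ \cite{DKO}) and using the Euler sequence of $\PP(B)$ together with $q^*B(1)|_{S_\b}$ gives $T^{\vir}_{S_\b}|_{\S{n_1,n_2}_\b}=H^1(\cO_S)-\cO+R\pi_*\cO(\cD_\b)$. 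Subtracting --- and recalling from \eqref{COdef} that
$$
\mathsf{CO}^{[n_1,n_2]}_\b\=R\pi_*\cO(\cD_\b)-R\hom_\pi\big(\cI_1(-\cD_\b),\cI_2\big)
$$
has rank $n_1+n_2$ --- we obtain $T^{\vir}_{\S{n_1,n_2}_\b}=\iota^*T^{\vir}_Y-\iota^*\mathsf{CO}^{[n_1,n_2]}_\b$. The copies of $H^2(\cO_S)$ distinguishing the $\vir$- and $\mathrm{red}$-normalisations enter $\S{n_1,n_2}_\b$ and $S_\b$ in parallel, so this holds for both.

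\emph{Step 2: compatibility of obstruction theories --- the main point.} One must lift Step 1 to an actual perfect obstruction theory for $\iota$ relative to $Y$, compatible with the absolute ones; this is what the degeneracy-locus machinery of Proposition \ref{mainp} provides once the obstruction theory is separated into its ``curve'' and ``points'' parts. Over $Y$ put $\bar E_\bu:=R\hom_\pi(\cI_1(-\cD_\b),\cI_2)$, which restricts to $\cU^*\otimes p^*E_\bu$ on $\S{n_1,n_2}_\b$; applying $R\hom_\pi(\cI_1(-\cD_\b),-)$ to $\cI_2\to\cO_S\to\cO_S/\cI_2$ yields the exact triangle
$$
\bar E_\bu\To R\hom_\pi\big(\cI_1,\cO_S(\cD_\b)\big)\To R\hom_\pi\big(\cI_1,(\cO_S/\cI_2)(\cD_\b)\big)\To\bar E_\bu[1].
$$
One then checks that --- after the quasi-isomorphism-invariant reduction built into Proposition \ref{mainp} --- the section of $\bar Y$ cutting out $\S{n_1,n_2}_\b$ is a direct sum (or at worst an extension) of the section of $F(1)$ cutting out $Y$ --- which produces $[S_\b]^{\mathrm{red}}$ --- and a ``nesting'' section vanishing on $\S{n_1,n_2}_\b\subset Y$, whose relative obstruction theory over $Y$ has virtual tangent class $-\iota^*\mathsf{CO}^{[n_1,n_2]}_\b$ by Step 1 and the triangle. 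This presents the absolute obstruction theory of $\S{n_1,n_2}_\b$ as built in two stages --- first the $S_\b$-theory on $Y$, then the nesting theory --- which is the required compatibility. When $H^2(L)\ne0$ the complex $E_\bu$ is only $3$-term; there one first applies the splitting trick of \cite[Section~6.1]{GT1} on a neighbourhood of $\S{n_1,n_2}_\b$ in $\PP(B)$ to make it $2$-term --- costing no Chow-theoretic information and replacing $[S_\b]^{\mathrm{red}}$ by the corresponding reduction $[S_\b]^{\vir}$.

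\emph{Step 3: conclusion, and the main obstacle.} With the compatible relative obstruction theory in hand, virtual pullback gives $[\S{n_1,n_2}_\b]^{\vir}=\iota^!\,[Y]^{\vir}$ with $[Y]^{\vir}=[\S{n_1}\times\S{n_2}]\times[S_\b]^{\vir}$; and since that relative theory presents $\iota$ as a virtual embedding of codimension $n_1+n_2$ with excess term $\mathsf{CO}^{[n_1,n_2]}_\b$ of rank $n_1+n_2$ --- equivalently, $\S{n_1,n_2}_\b$ is a virtual zero locus in $Y$ of a bundle with $K$-theory class $\mathsf{CO}^{[n_1,n_2]}_\b$ --- deformation to the normal cone and the refined Gysin map for its zero section give, on pushing forward,
$$
\iota_*\big[\S{n_1,n_2}_\b\big]^{\vir}\=c_{n_1+n_2}\big(\mathsf{CO}^{[n_1,n_2]}_\b\big)\cap\Big([\S{n_1}\times\S{n_2}]\times[S_\b]^{\vir}\Big).
$$
The identical argument with every $\vir$ replaced by $\mathrm{red}$ --- valid once $H^2(L)=0$ for effective $L$, which is what makes $E_\bu$ $2$-term near $\S{n_1,n_2}_\b$ and makes $[S_\b]^{\mathrm{red}}=e(F(1))\cap[\PP(B)]$ well defined --- gives the second formula. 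The hard part is Step 2: Steps 1 and 3 are formal ($K$-theory bookkeeping and a standard excess-intersection identity), whereas invoking virtual pullback genuinely requires producing the compatible relative obstruction theory --- i.e.\ showing the ``all at once'' degeneracy-locus obstruction theory of $\S{n_1,n_2}_\b$ factors, at the level of intrinsic normal cones rather than merely in $K$-theory, through the obstruction theory of $S_\b$.
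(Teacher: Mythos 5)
Your overall strategy --- express $\S{n_1,n_2}_\b$ and $Y=\S{n_1}\times\S{n_2}\times S_\b$ as zero loci inside a common ambient space (an affine bundle over an open $U\subset\PP(B)$), relate the two cut-out sections by a short exact sequence of bundles whose kernel has $K$-theory class $\mathsf{CO}^{[n_1,n_2]}_\b$, and then apply an excess-intersection identity --- is the paper's strategy, and your Step~1 $K$-theory bookkeeping is correct. The only substantive difference in Step~3 is packaging: the paper does not invoke virtual pullback or compatible triples of obstruction theories at all, but instead uses the elementary Lemma~\ref{fulton} (for $0\to A\to B\to C\to 0$ with a section $s$ of $B$ projecting to $t$ of $C$, one has $c_a(A)\cap t^![0_C]=\iota_*\,s^![0_B]$). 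This avoids the delicate business of producing a map of exact triangles of cotangent complexes, and is worth knowing about.

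The genuine gap is in Step~2, which you rightly flag as the hard part but then summarise as ``one then checks $\dots$ is a direct sum (or at worst an extension).'' That clause \emph{is} the theorem, and two things in it are not checks. First, the comparison map $u\colon R\hom_\pi(\cI_1,\cI_2\otimes\cL_\b)\to R\pi_*\,\cL_\b$ does not exist canonically: it must be manufactured by splitting off the term $H[-1]$, $H=\ext^2_\pi(\cO/\cI_1,\cL_\b)$, in the column of \eqref{vert}, and this is where the Jouanolou trick enters (not only for the 3-term $\to$ 2-term reduction). The triangle you write down, obtained from $\cI_2\to\cO_S\to\cO_S/\cI_2$, compares $\bar E_\bu$ with $R\hom_\pi(\cI_1,\cO(\cD_\b))$ --- that is the \emph{row} of \eqref{vert}, not the lift $u$ to $R\pi_*\cO(\cD_\b)$ that you actually need. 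Second, and more importantly, there is no a priori reason that $\Cone(u)$ --- the would-be ``nesting bundle'' --- is an honest vector bundle rather than a genuine two-term complex. Your Step~1 only pins down its $K$-theory class and rank; without it being a bundle neither virtual pullback nor Lemma~\ref{fulton} applies. The paper proves this in Lemma~\ref{vbG} by checking at closed points that $h^0(u)$ is injective and $h^1(u)$ is surjective, so $\Cone(u)$ has cohomology only in degree $1$. For the non-reduced statement, one must additionally extend $u$, its lift $\wt u$, the splittings, and the two-term resolutions from $Y$ to an affine neighbourhood $U$ of $\S{n_1,n_2}_\b$ inside $\PP(B)$ --- this occupies \eqref{ddgg}--\eqref{vert9} and Lemma~\ref{real} and is what your remark ``costing no Chow-theoretic information'' compresses. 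So: right strategy, right excess class, correct identification of where the work lies, but the work itself (bundle-hood of the excess term, construction of $u$, extension to $U$) is absent.
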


When $n_1=0$ the first of these results was proved in \cite{K} and the second follows from \cite[Appendix A]{KT1}.

\medskip\noindent\textbf{$\ell$-step Hilbert schemes.}
The generalization of Theorems \ref{pushBX} and  \ref{comparison} to $\ell$-step nested Hilbert schemes
$$
\S{n_1,\dots, n_\ell}_{\b_1,\dots,\b_{\ell-1}}
$$
is straightforward. See Section \ref{lstep} for details.

\medskip\noindent\textbf{Vafa-Witten invariants.} Vafa-Witten invariants \cite{TT1} are made up of ``instanton contributions" and ``monopole contributions". The former are virtual Euler characteristics of moduli spaces of rank $r$ sheaves on $S$, as studied in \cite{GK} for instance. The latter are integrals over moduli spaces of chains of sheaves on $S$ of total rank $r$ with nonzero maps between them.

When $p_g(S)>0$ and $r$ is prime a vanishing result \cite[Theorem 5.23]{TVW}
implies the only nonzero monopole contributions come from moduli spaces of chains of rank 1 sheaves. After tensoring with a line bundle, these are nested Hilbert schemes.
For instance in rank 2 the relevant integrals take the form
\beq{expan}
\int_{\big [\S{n_1,n_2}_\b\big]^{\vir}}\sum_i \rho^* \a_i \cup h^i\=
\sum_i\int_{\rho_*\big(h^i\cap\big [\S{n_1,n_2}_\b\big]^{\vir}\big)}\a_i,
\eeq
where $\a_i\in H^*(\S{n_1}\times \S{n_2}\times \Pic_\b(S))$ and $h:=c_1\big (\cO_{\bb P(B)}(1)\big)$ and
$$
\rho\ \colon\ \S{n_1,n_2}_\b \To S^{[n_1]}\times S^{[n_2]}\times \Pic_\b(S).
$$
Using Theorem \ref{comparison} we can express this in terms of (a) integrals over the smooth space $S^{[n_1]}\times S^{[n_2]}\times \Pic_\b(S)$, and (b) integrals over $S_\beta$. The latter give the Seiberg-Witten invariants \cite{DKO, CK},
$$
\mathsf{SW}_\b\ :=\ \left\{\!\!\begin{array}{lll} \deg\,[S_\b]^{\vir} && \vd_\b=\frac12\beta.(K_S-\beta)=0, \\ 0 && \mathrm{otherwise.}\end{array}\right..
$$

\begin{thm*} \label{satis} Suppose $p_g(S)>0$. Then in $H_*\big(S^{[n_1]}\times S^{[n_2]}\times\Pic_\b(S)\big)$,
\begin{align*}
\rho_*\big [\S{n_1,n_2}_\b\big]^{\vir}&\=\mathsf{SW}_\beta\cdot c_{n_1+n_2}\big(\!-\!R\hom_\pi(\cI_1,\cI_2\otimes L)\big)\times[L], \\
\rho_*\Big(\;\!h^i\cap\big [\S{n_1,n_2}_\b\big]^{\vir}\Big)&\=0 \quad\mathrm{for\ }i>0,
\end{align*}
where $L\in\Pic_\b(S)$ so $[L]$ is the generator of $H_0(\Pic_\b(S),\Z)$.
\end{thm*}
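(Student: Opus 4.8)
\medskip\noindent\textbf{Proof proposal.}
The plan is to reduce everything to Theorem~\ref{comparison} together with the known Seiberg--Witten behaviour of $[S_\b]^{\vir}$. First I would note that $\rho$ factors as $\rho=f\circ\iota$, where $\iota\colon\S{n_1,n_2}_\b\Into\S{n_1}\times\S{n_2}\times S_\b$ is the embedding of Theorem~\ref{comparison} and $f:=\id\times\id\times\AJ$ is induced by the Abel--Jacobi map $\AJ\colon S_\b\to\Pic_\b(S)$, $D\mapsto\cO(D)$. Next I would identify the tautological class. Since $\cO_{\PP(B)}(-1)$ restricts to $\cU$ on $\wt D_1=\S{n_1,n_2}_\b$, and the tautological map $\pi^*\cU\to\cL_\b$ on $S\times\S{n_1,n_2}_\b$ (cf.\ \eqref{embedB}) vanishes on the universal divisor $\cD$, taking $c_1$ and restricting to the basepoint $\{x\}\times\S{n_1,n_2}_\b$ --- where $\cL_\b$ is trivial by the normalisation in \eqref{rhompi2} --- gives
$$
h\=\iota^*\,\mathrm{pr}_{S_\b}^*\,\eta,\qquad \eta:=c_1\!\big(\cO(\cD_\b)|_{\{x\}\times S_\b}\big)\in H^2(S_\b),
$$
using the compatibility $\cO(\cD)=(\mathrm{pr}_{S_\b}\circ\iota)^*\cO(\cD_\b)$ of the universal divisors. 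Hence by Theorem~\ref{comparison} and the projection formula (first for $\iota$, then for $f$),
$$
\rho_*\Big(h^i\cap\big[\S{n_1,n_2}_\b\big]^{\vir}\Big)\=f_*\Big(\mathrm{pr}_{S_\b}^*\eta^{\,i}\cdot c_{n_1+n_2}\big(\mathsf{CO}_\beta^{[n_1,n_2]}\big)\cap\big([\S{n_1}\times\S{n_2}]\times[S_\b]^{\vir}\big)\Big)\qquad(i\ge0).
$$

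The second step rewrites the Carlsson--Okounkov class. From $\cO(\cD_\b)=(\id_S\times\AJ)^*\cL_\b\otimes\mathrm{pr}_{S_\b}^*\cN$ with $\cN:=\cO(\cD_\b)|_{\{x\}\times S_\b}$ (so $c_1\cN=\eta$), the projection formula and base change give
$$
\mathsf{CO}_\beta^{[n_1,n_2]}\=f^*\Phi\ \otimes\ \mathrm{pr}_{S_\b}^*\cN,\qquad \Phi:=R\pi_*\cL_\b-R\hom_\pi(\cI_1,\cI_2\otimes\cL_\b)\ \in K^0\big(\S{n_1}\times\S{n_2}\times\Pic_\b\big),
$$
and Grothendieck--Riemann--Roch gives $\rk\Phi=n_1+n_2$. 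Since $\rk\Phi=n_1+n_2$, the degree-$(n_1+n_2)$ component of $c(\Phi\otimes\mathrm{pr}_{S_\b}^*\cN)$ is the \emph{finite} sum $c_{n_1+n_2}(\mathsf{CO}_\beta^{[n_1,n_2]})=\sum_{k=0}^{n_1+n_2}f^*c_k(\Phi)\cdot\mathrm{pr}_{S_\b}^*\eta^{\,n_1+n_2-k}$ (a universal identity valid for $K$-classes). Substituting and applying the projection formula once more,
$$
\rho_*\Big(h^i\cap\big[\S{n_1,n_2}_\b\big]^{\vir}\Big)\=\sum_{k=0}^{n_1+n_2}c_k(\Phi)\ \cap\ \Big([\S{n_1}\times\S{n_2}]\ \times\ \AJ_*\big(\eta^{\,n_1+n_2-k+i}\cap[S_\b]^{\vir}\big)\Big).
$$

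Everything now rests on the classes $\AJ_*\big(\eta^{\,m}\cap[S_\b]^{\vir}\big)\in H_*(\Pic_\b(S))$, and this is where the hypothesis $p_g(S)>0$ is essential: $\AJ_*[S_\b]^{\vir}=\mathsf{SW}_\beta\,[L]$ is the definition of $\mathsf{SW}_\beta$ when $\vd(S_\beta)=0$, and it is a standard fact of Seiberg--Witten theory for $b_+>1$ --- in this algebraic setting established by \cite{DKO} and used in \cite{K} --- that $\AJ_*\big(\eta^{\,m}\cap[S_\b]^{\vir}\big)=0$ whenever $(\vd(S_\beta),m)\ne(0,0)$ (the case $m>\vd(S_\beta)$ being automatic, the content being the vanishing of the ``higher'' Seiberg--Witten invariants of a surface with $p_g>0$). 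Consequently the only surviving term has $n_1+n_2-k+i=0$ and $\vd(S_\beta)=0$; since $c_k(\Phi)$ occurs only for $0\le k\le n_1+n_2$ this forces $i=0$ and $k=n_1+n_2$, which proves $\rho_*\big(h^i\cap[\S{n_1,n_2}_\b]^{\vir}\big)=0$ for $i>0$. For $i=0$ the surviving term is $\mathsf{SW}_\beta\cdot c_{n_1+n_2}(\Phi)|_{\S{n_1}\times\S{n_2}\times\{L\}}\cap[\S{n_1}\times\S{n_2}]\times[L]$; restricting $\Phi$ to the slice over $L$ trivialises the summand $R\pi_*\cL_\b=H^*(S,L)\otimes\cO$, so $c_{n_1+n_2}$ of the restriction is $c_{n_1+n_2}\big(-R\hom_\pi(\cI_1,\cI_2\otimes L)\big)$, giving the first formula.

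The main obstacle is not this formal calculus but the two geometric identifications underpinning it: (i) the precise relation $h=\iota^*\mathrm{pr}_{S_\b}^*\eta$, which requires careful bookkeeping of the normalisation of the Poincar\'e bundle $\cL_\b$ and of the three universal divisors on $S\times\S{n_1,n_2}_\b$, on $S\times S_\b$, and on $\PP(B)$, including the fact (from \eqref{embedB}) that the tautological section factors through $\cL_\b\subset\cL_\b(A)$; and (ii) citing the correct form of the Seiberg--Witten vanishing for $p_g(S)>0$, which is exactly the ingredient Kool uses to treat $n_1=0$. Everything else is Grothendieck--Riemann--Roch and repeated use of the projection formula.
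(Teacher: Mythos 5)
Your proof is correct and uses the same essential ingredients as the paper's: the factorisation $\rho=(\id\times\id\times\mathsf{AJ})\circ\iota$, the comparison formula of Theorem~\ref{comparison}, and the Seiberg--Witten simple type property for $p_g>0$ cited from \cite{DKO}. The one organisational difference is that you carry out the Manivel expansion $c_{n_1+n_2}(\mathsf{CO}_\b^{[n_1,n_2]})=\sum_k c_k(\Phi)\,h^{n_1+n_2-k}$ and then let the vanishing of higher SW invariants kill every term but one --- which is in fact exactly the route the paper takes for $p_g(S)=0$ in its Theorem~\ref{pgzero}. The paper's own proof of the $p_g>0$ case avoids the expansion by a shortcut: once $\vd_\b=0$ is established via \cite[Prop.\,3.20]{DKO}, the class $[S_\b]^{\vir}$ is zero-dimensional, so $h^i|_{[S_\b]^{\vir}}=0$ for $i>0$ trivially and $\cL_\b(1)$ becomes topologically equivalent to the constant bundle $L$ over that cycle, allowing an immediate substitution into the comparison formula. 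Your version is slightly more explicit and arguably more robust (it doesn't need the ``topological triviality over a 0-cycle'' observation, only the uniform vanishing $\mathsf{SW}_\b^{\,m}=0$ for $(\vd_\b,m)\ne(0,0)$), but the geometric content is identical. The identification $h=\iota^*\mathrm{pr}_{S_\b}^*\eta$ is stated informally but is correct, following from $\cO(\cD_\b)\cong\cL_\b(1)$ \eqref{univs} and the normalisation of $\cL_\b$ at the basepoint $x$.
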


When $p_g(S)=0$ then $S$ may not be of ``Seiberg-Witten simple type", which means we have to consider higher dimensional Seiberg-Witten moduli spaces. Letting $\mathsf{AJ}\colon S_\b\to\Pic_\b(S)$ denote the Abel-Jacobi map, the higher Seiberg-Witten invariants \cite{DKO} are
$$
\mathsf{SW}_\b^{\;j}\ :=\ \mathsf{AJ}_*\big(h^j \;\cap \; [S_\b]^{\vir}\big)\ \in\ H_{2(\vd_\b-j)}(\Pic_\b(S))\=\wwedge^{\!\;2(\vd_\b-j)}H^1(S).
$$

\begin{thm*} \label{notsatis}
If $p_g(S)=0$ then in $H_{2(n_1+n_2+\vd_\b-i)}(S^{[n_1]}\times S^{[n_2]}\times\Pic_\b(S))$,
\begin{multline*}
\bullet\ \rho_*\big(h^i\cap\big[\S{n_1,n_2}_\b\big]^{\vir}\big)\= \\
\sum_{j=0}^{n_1+n_2}c_{n_1+n_2-j}\big(R\pi_*\;\cL_\b-R\hom_\pi(\cI_1,\cI_2\otimes\cc L_\b)\big)\cup\mathsf{SW}_\b^{\;i+j}.\qedhere
\end{multline*}

$\bullet$ If $H^2(L)=0\ \forall\,L\in\Pic_\b(S)$ then, setting $d=n_1+n_2+h^1(\cO_S)-\vd_\b$,
$$
\rho_{*} \big(h^i \cap\big [\S{n_1,n_2}_\b\big]^{\vir}\big)\=c_{d+i}\big(\!-R\hom_\pi(\cI_1,\cI_2\otimes\cL_\b)\big).
$$

$\bullet$ If not,
$$
\rho_{*}\big(h^i \cap\big[\S{n_1,n_2}_{\b}\big]^{\vir}\big)\=\mathsf{SW}_\b\cdot c_{n_1+n_2+i}\big(\!-R\hom_\pi(\cI_1,\cI_2\otimes L)\big)\times[L],
$$
and both sides vanish when $i>0$. (Cf. Theorem \ref{satis}.)
\end{thm*}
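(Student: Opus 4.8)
\medskip\noindent\textbf{Proof strategy.}
The plan is to deduce all three assertions from the comparison formula of Theorem~\ref{comparison} (and, for the second bullet, Theorem~\ref{pushBX}) together with the definition $\mathsf{SW}_\b^{\,j}=\mathsf{AJ}_*\big(h^j\cap[S_\b]^{\vir}\big)$, via a Chern-class computation and a pushforward along the Abel--Jacobi map. I would begin by factoring $\rho=\bar\rho\circ\iota$, where $\iota$ is the embedding of Theorem~\ref{comparison} and $\bar\rho:=\id\times\id\times\mathsf{AJ}\colon\S{n_1}\times\S{n_2}\times S_\b\to\S{n_1}\times\S{n_2}\times\Pic_\b(S)$. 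Two preliminary identifications are the key inputs. First, the determinant of the universal map $\cU\otimes\cI_1(-\cL_\b)\to\cI_2$ on $S\times\wt D_1$ is a section of $\cU^{-1}\otimes\cL_\b$ whose zero divisor is, by the construction of $\iota$, the pullback of the universal curve; since $\det\cI_i=\cO$ and $\cL_\b$ is normalised trivial over the basepoint $x$, restricting to $\{x\}\times\wt D_1$ gives $\cU^{-1}=\iota^*\big(\cO(\cD_\b)|_{\{x\}\times S_\b}\big)$, so $h=c_1\big(\cO_{\PP(B)}(1)\big)=\iota^*c_1(M)$ for $M:=\cO(\cD_\b)|_{\{x\}\times S_\b}$. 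Second, a see-saw argument gives $\cO(\cD_\b)=(\id\times\mathsf{AJ})^*\cL_\b\otimes\pi^*M$ on $S\times S_\b$; substituting this into \eqref{COdef} and using the projection formula plus flat base change along $\pi$ yields
$$
\mathsf{CO}_\b^{[n_1,n_2]}\=M\otimes\mathsf{AJ}^*\mathsf{N}_\b,\qquad \mathsf{N}_\b\ :=\ R\pi_*\;\cL_\b-R\hom_\pi(\cI_1,\cI_2\otimes\cL_\b),
$$
a class of rank $n_1+n_2$ on $\S{n_1}\times\S{n_2}\times\Pic_\b(S)$.

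For the first bullet I would then compute directly. By Theorem~\ref{comparison}, the projection formula and $h=\iota^*c_1(M)$,
$$
\iota_*\big(h^i\cap\big[\S{n_1,n_2}_\b\big]^{\vir}\big)\=c_1(M)^{\,i}\cdot c_{n_1+n_2}\big(M\otimes\mathsf{AJ}^*\mathsf{N}_\b\big)\cap\big(\big[\S{n_1}\times\S{n_2}\big]\times\big[S_\b\big]^{\vir}\big).
$$
As $\mathsf{N}_\b$ has rank $n_1+n_2$, the twist identity $c_N(M\otimes v)=\sum_{k=0}^N c_1(M)^{\,N-k}c_k(v)$ expands the middle Chern class; applying $\bar\rho_*$ and the projection formula --- in the $\S{n_1}\times\S{n_2}$ directions against the $\bar\rho$-pullback $\mathsf{AJ}^*c_k(\mathsf{N}_\b)$, and in the $S_\b$ direction against the $S_\b$-pulled-back line bundle $M$ --- reduces each term to $\mathsf{AJ}_*\big(c_1(M)^{\,m}\cap[S_\b]^{\vir}\big)=\mathsf{SW}_\b^{\,m}$ (as $c_1(M)$ is the class $h$ on $S_\b$, the $n_1=n_2=0$ case of the first identification). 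Re-indexing by $j=n_1+n_2-k$ rewrites the result as $\sum_{j=0}^{n_1+n_2}c_{n_1+n_2-j}(\mathsf{N}_\b)\cup\mathsf{SW}_\b^{\,i+j}$, the cup product being that induced under Poincar\'e duality on the smooth ambient space; this is the first bullet.

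The second bullet I would obtain by the same scheme inside $\PP(B)$: when $p_g(S)=0$ the virtual class is the reduced one, so Theorem~\ref{pushBX} gives $\iota_*\big[\S{n_1,n_2}_\b\big]^{\mathrm{red}}=c_{n_1+n_2+d'}\big((q^*B-q^*\cF)\otimes\cO_{\PP(B)}(1)\big)$ in $A_*(\PP(B))$ with $\cF=R\hom_\pi(\cI_1,\cI_2\otimes\cL_\b)$, $d'=\chi(L(A))-\chi(L)$; since $\rho=q\circ\iota$ and $h=\iota^*c_1(\cO_{\PP(B)}(1))$, expanding the twist and pushing forward along the projective bundle $q$ (turning powers of $c_1(\cO_{\PP(B)}(1))$ into Segre classes of $B$) makes these telescope against $c_\bullet(B-\cF)$, leaving $c_{d+i}(-\cF)$ with $d=n_1+n_2-\chi(L)+1=n_1+n_2+h^1(\cO_S)-\vd(S_\b)$ (Riemann--Roch and $p_g=0$). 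The third bullet I would deduce from the first exactly as Theorem~\ref{satis} is proved in the $p_g>0$ case: when some $H^2(L)\neq0$ the class $[S_\b]^{\vir}$ is concentrated --- $\mathsf{SW}_\b^{\,k}=0$ for $k\geq1$ and $\mathsf{AJ}_*[S_\b]^{\vir}=\mathsf{SW}_\b\cdot[L]$ (cf.\ \cite{DKO,K}) --- so the sum in the first bullet collapses to its $j=0$ term, and restricting $\mathsf{N}_\b$ to $\S{n_1}\times\S{n_2}\times\{L\}$ kills the $R\pi_*\cL_\b$ summand, leaving $-R\hom_\pi(\cI_1,\cI_2\otimes L)$ and the stated formula, both sides vanishing for $i>0$.

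The step I expect to be the main obstacle is the first preliminary identification: showing, compatibly with all the universal data, that under $\iota$ the tautological class $h=c_1(\cO_{\PP(B)}(1))$ is $c_1$ of the basepoint slice of the universal curve $\cD_\b$ (equivalently $\cU\cong\big(\cO(\cD_\b)|_{\{x\}\times S_\b}\big)^{-1}$ after pullback), together with the accompanying see-saw decomposition of $\cO(\cD_\b)$. This is the only genuinely geometric input --- everything downstream is formal Chern-class algebra, the projective-bundle pushforward, and, for the third bullet, the cited concentration of $[S_\b]^{\vir}$ --- and it is what fixes all the line-bundle twists; a secondary point requiring care is keeping the various derived base-change and projection-formula manipulations (for $R\hom_\pi$ and for the pullbacks along $\pi$ and $\mathsf{AJ}$) honest.
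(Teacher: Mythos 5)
Your proposal follows essentially the same route as the paper's: all three bullets reduce, via the comparison formula of Theorem~\ref{comparison} and the factorisation $\rho=(\id\times\id\times\mathsf{AJ})\circ\iota$, to the Chern-class identity $\mathsf{CO}_\b^{[n_1,n_2]}=\cO(1)\otimes\mathsf{N}_\b$ followed by the rank-$(n_1+n_2)$ twist expansion (this is exactly Manivel's formula \cite{Ma} cited in the paper; you should say so explicitly since $\mathsf{N}_\b$ is a K-theory class, not a bundle, on $\S{n_1}\times\S{n_2}\times\Pic_\b(S)$). Your ``see-saw'' identification $\cO(\cD_\b)=(\id\times\mathsf{AJ})^*\cL_\b\otimes\pi^*M$ is just a re-derivation of the paper's $\cO(\cD_\b)=\cL_\b(1)$ from \eqref{univs}, so the first two bullets match the paper's argument line by line, including the Segre-class telescoping against $c_\bullet(B-\cF)$ for the second.

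For the third bullet there is a real gap. You deduce it from the first bullet by the collapse $\mathsf{SW}_\b^{\,k}=0$ for $k\geq1$, which indeed gives $\rho_*(h^i\cap[\cdot]^{\vir})=0$ for $i>0$; but the stated identity also asserts that the \emph{right}-hand side $\mathsf{SW}_\b\cdot c_{n_1+n_2+i}\big(\!-R\hom_\pi(\cI_1,\cI_2\otimes L)\big)\times[L]$ vanishes for $i>0$, and this does not follow from the concentration of $[S_\b]^{\vir}$. One needs the generalised Carlsson--Okounkov vanishing $c_{n_1+n_2+i}(\!-R\hom_\pi(\cI_1,\cI_2\otimes L))=0$ for $i>0$ (\cite[Theorem~3]{GT1}), which the paper invokes and you do not. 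Relatedly, the reduction to $\vd_\b=0$ and $h^1(\cO_S)=1$ in the case that both $\b$ and $\b^\vee$ are effective rests on \cite[Corollary~3.15]{DKO}, and the edge cases ($\b$ not effective, $\vd_\b<0$) should be dispatched first by noting $[S_\b]^{\vir}=0$; your appeal to ``[DKO, K]'' is too vague to cover this cleanly. These are localized omissions rather than a wrong approach, but the $i>0$ vanishing of the right-hand side genuinely needs the extra input.
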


Now set $\b^{\vee}:=K_S-\b$, so that the condition that $H^2(L)\ne0$ for some $L\in\Pic_\b(S)$ is the condition that $\beta^\vee$ is effective. 
Seiberg-Witten theory has a duality under $\beta\leftrightarrow\beta^\vee$. This gives interesting dualities between invariants of nested Hilbert schemes under $\beta\leftrightarrow\beta^\vee,\ n_1\leftrightarrow n_2$. Define the map
$$
\rho^\vee\,\colon\ \S{n_2,n_1}_{\b^\vee}\To S^{[n_1]}\times S^{[n_2]}\times \Pic_\b(S)
$$
by replacing $\beta\leftrightarrow\beta^\vee,\ n_1\leftrightarrow n_2$ in
$\rho\colon\S{n_1,n_2}_\b\To S^{[n_1]}\times S^{[n_2]}\times \Pic_\b(S)$
and then composing with $L\mapsto K_S\otimes L^{-1}\colon$ $\Pic_{\b^\vee}(S)\to\Pic_\b(S)$.

\begin{thm*}[Duality]\label{dualite'} In $H_{2(n_1+n_2+\vd_\b-i)}(S^{[n_1]}\times S^{[n_2]}\times\Pic_\b(S))$,
\begin{align*}
\bullet& \text{ if }p_g(S)>0,\quad%
\rho_*\big(h^i\cap\big[\S{n_1,n_2}_\b\big]^{\vir}\big)\=(-1)^{s+i}\,\rho^\vee_{*}\big(h^i\cap\big[\S{n_2,n_1}_{\b^\vee}\big]^{\vir}\big), \\
\bullet& \text{ if }p_g(S)=0, \quad\,
\rho_{*}\big(h^i\cap\big[\S{n_1,n_2}_{\b}\big]^{\vir}\big)\=(-1)^{s+i}\;\rho^\vee_{*}\big(h^i\cap\big[\S{n_2,n_1}_{\b^\vee}\big]^{\vir}\big)+ \\
&\hspace{7cm} c_{d+i}\big(\!-R\hom_\pi(\cI_1,\cI_2\otimes\cL_{\b})\big),
\end{align*}
where $s=n_1+n_2-\chi(\cO_S)-\vd_\b=d-1$.
\end{thm*}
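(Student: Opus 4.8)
The plan is to reduce everything to the case $n_1=n_2=0$, where the statement is a refinement of the classical Seiberg--Witten duality $\b\leftrightarrow\b^\vee$ for $[S_\b]^{\vir}$ (as in \cite{DKO}), and then to propagate it to general $n_1,n_2$ via the computation of Theorem \ref{notsatis} together with a Serre-duality symmetry of the classes $N_\b:=R\pi_*\cL_\b-R\hom_\pi(\cI_1,\cI_2\otimes\cL_\b)$ occurring there.

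First I would record the symmetry. Set $\tau\colon\Pic_\b(S)\to\Pic_{\b^\vee}(S)$, $L\mapsto K_S\otimes L^{-1}$. Both $(\id_S\times\tau)^*\cL_{\b^\vee}$ and $p_S^*K_S\otimes\cL_\b^{-1}$ are families of line bundles of class $\b^\vee$ over $\Pic_\b(S)$, inducing $\tau$ and trivial over the chosen basepoint, so by uniqueness of the normalised Poincar\'e bundle $(\id_S\times\tau)^*\cL_{\b^\vee}\cong p_S^*K_S\otimes\cL_\b^{-1}$. Let $N_{\b^\vee}'$ denote the class $N_{\b^\vee}$ of the $\S{n_2,n_1}_{\b^\vee}$-problem, transported to $\S{n_1}\times\S{n_2}\times\Pic_\b(S)$ by $\tau$ and the swap of the two Hilbert-scheme factors built into $\rho^\vee$; concretely
$$
N_{\b^\vee}'\=R\pi_*\big(p_S^*K_S\otimes\cL_\b^{-1}\big)-R\hom_\pi\big(\cI_2,\cI_1\otimes p_S^*K_S\otimes\cL_\b^{-1}\big).
$$
Grothendieck--Serre duality along $\pi$ (relative dimension $2$, $\omega_\pi=p_S^*K_S$) identifies the two summands with $(R\pi_*\cL_\b)^\vee$ and $R\hom_\pi(\cI_1,\cI_2\otimes\cL_\b)^\vee$ in K-theory, so $[N_{\b^\vee}']=[N_\b^\vee]$ and hence $c_k(N_{\b^\vee}')=(-1)^kc_k(N_\b)$ for all $k$.

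Theorem \ref{notsatis} reads $\rho_*\big(h^i\cap[\S{n_1,n_2}_\b]^{\vir}\big)=\sum_j c_{n_1+n_2-j}(N_\b)\cup\mathsf{SW}_\b^{\,i+j}$; applying it to $(n_2,n_1,\b^\vee)$, transporting by $\tau$ and the factor-swap, and using $c_k(N_{\b^\vee}')=(-1)^kc_k(N_\b)$ gives
$$
\rho^\vee_*\big(h^i\cap[\S{n_2,n_1}_{\b^\vee}]^{\vir}\big)\=\sum_{j=0}^{n_1+n_2}(-1)^{n_1+n_2-j}c_{n_1+n_2-j}(N_\b)\cup\tau_*\mathsf{SW}_{\b^\vee}^{\,i+j}.
$$
The $n_1=n_2=0$ instance (where $N_\b=0$ and each sum has a single term) is precisely the refined Seiberg--Witten duality $\tau_*\mathsf{SW}_{\b^\vee}^{\,m}=(-1)^{\chi(\cO_S)+\vd_\b+m}\mathsf{SW}_\b^{\,m}$ for $p_g(S)>0$ --- a consequence of \cite{DKO} and the classical $\mathfrak s\leftrightarrow\mathfrak s^\vee$ symmetry --- together with a $b_2^+=1$ wall-crossing correction when $p_g(S)=0$ (whose sub-case $\b^\vee$ not effective is the second bullet of Theorem \ref{notsatis}). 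Feeding this back, the sign exponent $(n_1+n_2-j)+(\chi(\cO_S)+\vd_\b+i+j)$ is independent of $j$ and, since $s=n_1+n_2-\chi(\cO_S)-\vd_\b$, congruent to $s+i$ modulo $2$. This yields the first bullet when $p_g(S)>0$; when $p_g(S)=0$ it leaves the convolution of $\sum_j(-1)^{n_1+n_2-j}c_{n_1+n_2-j}(N_\b)$ with the wall-crossing correction, which --- using $-R\hom_\pi(\cI_1,\cI_2\otimes\cL_\b)=N_\b-R\pi_*\cL_\b$, the identity $c(N_\b)\cdot c(-R\pi_*\cL_\b)=c(N_\b-R\pi_*\cL_\b)$, $d=n_1+n_2+h^1(\cO_S)-\vd_\b$, and the fact that the DKO wall-crossing correction is built from $R\pi_*\cL_\b$ --- telescopes to the stated $c_{d+i}\big(\!-R\hom_\pi(\cI_1,\cI_2\otimes\cL_\b)\big)$.

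The main obstacle is this last step for $p_g(S)=0$: extracting the precise $b_2^+=1$ wall-crossing class for the \emph{refined} Seiberg--Witten invariants of $S_\b$ and checking that, after convolution with $c_\bullet(N_\b)$, it collapses to exactly that Chern class. The rest is routine bookkeeping of shifts and signs through Grothendieck--Serre duality and the transport by $\tau$ and the factor-swap.
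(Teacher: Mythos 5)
The high-level strategy here matches the paper's: push everything through the comparison result, use Serre duality along $\pi$ to relate $c_k$ of the $\b$- and $\b^\vee$-classes, and invoke a Seiberg--Witten $\b\leftrightarrow\b^\vee$ duality from \cite{DKO} for the $n_1=n_2=0$ contribution. The sign bookkeeping you do (checking $(n_1+n_2-j)+(\chi(\cO_S)+\vd_\b+i+j)\equiv s+i\bmod 2$, independent of $j$) is correct. However, the proposal has a genuine gap, which you yourself flag: the crucial $p_g(S)=0$ wall-crossing computation is left as ``the main obstacle,'' and the claimed clean multiplicative duality
$\tau_*\mathsf{SW}_{\b^\vee}^{m}=(-1)^{\chi(\cO_S)+\vd_\b+m}\mathsf{SW}_\b^m$
plus a ``correction'' is never pinned down. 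What \cite{DKO} actually provides is an \emph{additive} relation (their Theorem 3.16):
$$
\mathsf{AJ}_{\b*}[S_\b]^{\vir}\=\mathsf{AJ}_{\b^\vee*}[S_{\b^\vee}]^{\vir}+c_1\big(-R\pi_*\cL_\b\big),
$$
and making this tractable requires the dimensional collapse of their Corollary 3.15: when $p_g(S)=0$ and $\beta$, $\beta^\vee$ are both effective with $\vd_\b\ge0$, one automatically has $\vd_\b=0$ and $h^1(\cO_S)=1$. This forces $c_{\ge2}(-R\pi_*\cL_\b)=0$ and makes $\cO(1)$ trivial on $[S_\b]^{\vir}$, after which the Carlsson--Okounkov vanishing $c_{\ge n_1+n_2+1}(N_\b)=0$ telescopes the convolution down to the single term $c_{n_1+n_2+1}(-R\hom_\pi(\cI_1,\cI_2\otimes\cL_\b))=c_{d}(-R\hom_\pi(\cI_1,\cI_2\otimes\cL_\b))$ (recalling $d=n_1+n_2+1$ here).

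You also omit the case analysis that the paper carries out before reaching the above: the case $\b^\vee$ not effective is handled by the second bullet of the pushforward theorem you cite, the case $\b$ not effective follows from the same by Serre duality, and the case $\vd_\b<0$ is resolved by showing both sides vanish (left side by the comparison theorem, right side by expanding with Carlsson--Okounkov vanishing). These cases are not corollaries of the convolution identity alone; they need separate argument, because the wall-crossing correction is not symmetric under $\b\leftrightarrow\b^\vee$. So: right shape, right tools, but the one step you declare to be the hard part is exactly where the proof lives, and it cannot be filled in without DKO Cor.~3.15, DKO Thm.~3.16, and the case split.
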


\begin{thm*}
If $p_g(S)>0$ the contributions of the nested Hilbert schemes $\S{n_1,\dots, n_\ell}_{\b_1,\dots,\b_{\ell-1}}$ to the Vafa-Witten invariants of $S$ are invariants of its oriented diffeomorphism type.
\end{thm*}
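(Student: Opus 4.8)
The plan is to reduce each such contribution, via the $\ell$-step generalisation of Theorem \ref{satis} from Section \ref{lstep}, to a product of Seiberg--Witten invariants times a universal tautological integral over a product of Hilbert schemes of points, and then to recognise every factor on the right-hand side as an oriented-diffeomorphism invariant of $S$. Two facts from Seiberg--Witten theory are used throughout. Since $p_g(S)>0$ we have $b_2^+(S)=2p_g(S)+1\ge3$, so the Poincar\'e/Seiberg--Witten invariants $\mathsf{SW}_\b$ are unambiguous; being equal, up to sign, to the Seiberg--Witten invariants of the $\mathrm{Spin}^c$ structures $\mathfrak s_\b$ with $c_1(\mathfrak s_\b)=2\b-K_S$ (by \cite{DKO,CK}), they are invariants of the oriented diffeomorphism type of $S$. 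Moreover the canonical class $K_S$, modulo torsion and up to an overall sign, is itself such an invariant. Finally $e(S)$, the signature $\sigma(S)$ and $b_1(S)$ are topological, hence so are $K_S^2=3\sigma(S)+2e(S)$ and $\chi(\cO_S)=\tfrac1{12}\big(K_S^2+e(S)\big)$.

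A contribution of $\S{n_1,\dots,n_\ell}_{\b_1,\dots,\b_{\ell-1}}$ to a Vafa--Witten invariant has, as in \eqref{expan}, the shape $\int_{[\S{n_1,\dots,n_\ell}_{\b_1,\dots,\b_{\ell-1}}]^{\vir}}\rho^*\a\cup h^{\bullet}$, where $\a$ is a universal polynomial in the K\"unneth components of the Chern characters of the universal sheaves $\cI_j$ and of the Poincar\'e bundles, and $h^\bullet$ is a monomial in the hyperplane classes $h_i:=c_1\big(\cO_{\PP(B_i)}(1)\big)$. By the projection formula together with the $\ell$-step analogue of Theorem \ref{satis} --- which states that $\rho_*\big(h^\bullet\cap[\S{n_1,\dots,n_\ell}_{\b_1,\dots,\b_{\ell-1}}]^{\vir}\big)$ vanishes as soon as $h^\bullet$ contains a positive power of any $h_i$, and otherwise equals $\prod_i\mathsf{SW}_{\b_i}\cdot\prod_i c_\bullet\big(\!-R\hom_\pi(\cI_i,\cI_{i+1}\otimes L_i)\big)\times\prod_i[L_i]$ on $\prod_j\S{n_j}\times\prod_i\Pic_{\b_i}(S)$, with $L_i$ a fixed line bundle with $c_1(L_i)=\b_i$ --- the contribution equals $\prod_i\mathsf{SW}_{\b_i}$ times a tautological integral over $\prod_j\S{n_j}$ of a polynomial in the Chern characters of the $\cI_j$, the classes $e^{\b_i}$ and $\Td(S)$.

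By the Ellingsrud--G\"ottsche--Lehn universality of tautological integrals over Hilbert schemes of points --- cf.\ \cite{CO} for the Carlsson--Okounkov type classes appearing here --- this integral is a universal polynomial, depending only on $\ell$ and the $n_j$, in the numbers $e(S)$, $K_S^2$, $\chi(\cO_S)$, $b_1(S)$, the pairings $\b_i.\b_j$ and the pairings $\b_i.K_S$. Of these, only the pairings $\b_i.K_S$ are not already recognised as diffeomorphism invariants; but by definition the prefactor $\prod_i\mathsf{SW}_{\b_i}$ vanishes unless $\vd_{\b_i}=0$, i.e.\ unless $\b_i.K_S=\b_i^2$, for every $i$, so in every non-vanishing term we may substitute the topological number $\b_i^2$ for $\b_i.K_S$. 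The contribution is therefore $\prod_i\mathsf{SW}_{\b_i}$ times a universal polynomial in $\big\{n_j,\,e(S),\,K_S^2,\,\chi(\cO_S),\,b_1(S),\,\b_i.\b_j\big\}$.

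Every factor here is an oriented-diffeomorphism invariant. Indeed $e(S)$, $K_S^2$, $\chi(\cO_S)$ and $b_1(S)$ are topological, and under an orientation-preserving diffeomorphism $f\colon S'\to S$ the classes $\b_i$ go to $f^*\b_i$: since $f^*$ preserves the intersection form the $\b_i.\b_j$ are unchanged, and since $f^*$ carries $K_S$ to $\pm K_{S'}$ and identifies the relevant $\mathrm{Spin}^c$ structures accordingly, it carries $\prod_i\mathsf{SW}^S_{\b_i}$ --- together with the combinatorial data, after applying the $\b\leftrightarrow\b^\vee$, $n_1\leftrightarrow n_2$ duality of Theorem \ref{dualite'} term by term when $f$ reverses the sign of $K_S$ --- to the corresponding product of Seiberg--Witten invariants of $S'$. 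Hence the whole collection of these contributions, and in particular the total monopole contribution to each Vafa--Witten invariant (a finite sum of them over the combinatorial data), depends only on the oriented diffeomorphism type of $S$. The main obstacle is the second step: establishing the $\ell$-step push-forward formula precisely and checking that the Vafa--Witten insertion classes $\a$ genuinely lie in the tautological subalgebra to which the universality theorem applies. The remaining ingredients --- the diffeomorphism invariance of $\mathsf{SW}$ and of $K_S$, and the topological identities for $e(S)$, $K_S^2$, $\chi(\cO_S)$ --- are standard Seiberg--Witten input, and the one genuinely new point is the use of $\vd_{\b_i}=0$ to eliminate the $\b_i.K_S$ terms.
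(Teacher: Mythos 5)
Your proposal follows essentially the same route as the paper: apply Theorem~\ref{satis} (the $\ell$-step analogue of Theorem~\ref{pg>0}, which the paper states for $\ell=2$ and declares the general case ``no more complicated'') to reduce the contribution to $\prod_i\mathsf{SW}_{\b_i}$ times a tautological integral over a product of Hilbert schemes of points, invoke EGL-type universality to express the integral as a universal polynomial in $c_1(S)^2$, $c_2(S)$, $\b_i.\b_j$ and $\b_i.K_S$, and then eliminate the intersection numbers $\b_i.K_S$ by observing that $\mathsf{SW}_{\b_i}=0$ unless $\vd_{\b_i}=0$, i.e.\ $\b_i.K_S=\b_i^2$. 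That last step is exactly the observation the paper attributes to a referee, and it is the only genuinely non-standard input besides the push-forward formula.

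Where you are looser than the paper is precisely the step you flag yourself: EGL as stated applies to a single Hilbert scheme $S^{[n]}$, whereas the integrands here live on a product $S^{[n_1]}\times\cdots\times S^{[n_\ell]}$ and involve ``cross'' classes such as $R\hom_\pi(\cI_i,\cI_{i+1}\otimes L_i)$. The paper makes this precise by invoking the adaptation in \cite[Section 5]{GNY}: one applies the EGL induction to $T:=S\sqcup S$, whose Hilbert schemes decompose as $T^{[n]}=\bigsqcup_{n_1+n_2=n}\S{n_1}\times\S{n_2}$, and one checks via \cite[Equations 5.2, 5.4]{GNY} that the integrand stays inside the tautological algebra through the induction; your appeal to \cite{CO} points in the right direction but the disjoint-union trick is the device that actually closes this gap. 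Your final discussion of orientation-preserving diffeomorphisms reversing $K_S$ and the duality Theorem~\ref{dualite'} is not needed: once the integral is reduced to a universal polynomial in $e(S)$, $c_1(S)^2$, $\b_i.\b_j$ (all diffeomorphism-invariant) weighted by $\prod\mathsf{SW}_{\b_i}$, the diffeomorphism invariance of the sum over $\beta$'s follows directly from the diffeomorphism invariance of the set $\{(\b,\mathsf{SW}_\b)\}$ when $b^+>1$, with no case analysis on the sign of $K_S$.
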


When $p_g(S)=0$ this can fail because only the unordered pair $(\mathsf{SW}_\b,\mathsf{SW}_{\b^\vee})$ is invariant under oriented diffeomorphisms, rather than the individual invariant $\mathsf{SW}_\b$.

\medskip\noindent\textbf{Laarakker.}
In \cite{La1} (stable case) and \cite{La2} (general case) Laarakker uses these results to compute Vafa-Witten invariants \cite{TT1,TT2} and \emph{refined} Vafa-Witten invariants \cite{TVW} on surfaces with $p_g>0$ (and $h^{0,1}=0$ for now).

By the comparison result for virtual cycles of Theorem \ref{comparison} the contributions from points and curves split, with the curves contributing Seiberg-Witten invariants --- i.e. well-understood integrals over linear systems of curves. Laarakker evaluates the contributions of Hilbert schemes of points via the method of \cite{EGL}. The result depends only on the curve class $\beta\in H_2(S,\Z)$ and the cobordism class of the surface --- and thus only on $c_1(S)^2,\,c_2(S)$ and $\beta^2$. Therefore these contributions can be calculated on K3 surfaces and toric surfaces (despite these not having $p_g>0$!).

The results are very general, but the absolute simplest to state is for $S$ minimal of general type with $p_g(S)>0$  and $H_1(S,\Z)=0$, such that $K_S$ is not divisible by 2.

\begin{thm*}\cite{La1}
Let $S$ be as above. The monopole branch contributions\footnote{See \cite{TVW} for definitions. The monopole branch contributions come from $\C^*$-fixed Higgs pairs $(E,\phi)$ on $S$ with $\det E=K_S,\,\tr\phi=0$ and $\phi\ne0$.} of rank 2 Higgs pairs with $\det=K_S$ to the refined Vafa-Witten generating series $\sum_n\mathsf{VW}_{2,K_S,n}(t)\;q^n$ can be written
$$  
A(t,q)^{\chi(\cO_S)}B(t,q)^{c_1(S)^2},
$$
where
$$
A(t,q),\ B(t,q)\ \in\ \Q(t^{1/2})(\!(q^{1/2})\!)
$$
are universal functions, independent of $S$. 
\end{thm*}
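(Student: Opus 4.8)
The plan is to reduce the monopole-branch part of $\sum_n\mathsf{VW}_{2,K_S,n}(t)\,q^n$ to $\C^*$-equivariant tautological integrals over products of Hilbert schemes of points, using the comparison theorems of this paper to strip off the curve classes, and then to read off the shape $A^{\chi(\cO_S)}B^{c_1(S)^2}$ from the universality and exponentiation arguments of \cite{EGL}.

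First I would apply the vanishing theorem \cite[Theorem 5.23]{TVW}: since $p_g(S)>0$ and $r=2$ is prime, every nonzero monopole contribution comes from a $\C^*$-fixed Higgs pair $(E,\phi)$ which is a chain of rank-one torsion-free sheaves, $E=E_0\oplus E_1$ with $\det E_i=M_i$, $M_0M_1=K_S$, and $\phi\colon E_0\to E_1\otimes K_S$ nonzero. Such a fixed locus is a nested Hilbert scheme $\S{n_1,n_2}_\beta$ of points together with a curve in a class $\beta=2c_1(M_1)$ determined by the splitting (hence divisible by $2$, since $M_0^{-1}M_1K_S\cong M_1^{\otimes2}$), and the refined contribution of each component is the integral over $\big[\S{n_1,n_2}_\beta\big]^{\vir}$ of a tautological class built from $\cI_1,\cI_2$ and the $\C^*$-weight on $\phi$. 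The $t^{1/2}$-dependence of $\mathsf{VW}_{2,K_S,n}(t)$ enters here and only here, through the $\C^*$-weights on the virtual normal bundle of the fixed locus, which is a $t$-twist of the Carlsson--Okounkov-type complex controlling $\wt D_1(E_\bu)$.

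Next I would push each integral forward along $\S{n_1,n_2}_\beta\into\S{n_1}\times\S{n_2}\times S_\beta$ and invoke Theorems \ref{comparison} and \ref{satis}. By Theorem \ref{satis} the $h^i$-insertions vanish after pushforward for $i>0$ (we are in the $p_g>0$ case), and the $h^0$ part equals $\mathsf{SW}_\beta$ times a Chern class of a Carlsson--Okounkov $K$-theory class on $\S{n_1}\times\S{n_2}$. As $S$ is minimal of general type it has Seiberg--Witten simple type, with $\mathsf{SW}_\beta\ne0$ only for $\beta\in\{0,K_S\}$; but $\beta$ is divisible by $2$ whereas $K_S$ is not, so only $\beta=0$ survives --- forcing $M_0=K_S$, $M_1=\cO_S$ since $H_1(S,\Z)=0$, and $\mathsf{SW}_0=1$. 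Hence the whole monopole series collapses to a single family of equivariant tautological integrals over $\S{n_1}\times\S{n_2}$ against Chern classes of a $t$-twisted $R\hom_\pi(\cI_1,\cI_2)$, with no remaining curve class --- exactly the pure-points situation reviewed above.

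Finally I would evaluate these by the method of \cite{EGL}. Each such integral is a universal polynomial in $\chi(\cO_S)$, $c_1(S)^2$ and $c_2(S)$ --- the only inserted line bundle being $K_S$, so no further Chern numbers enter --- and $c_2(S)=12\chi(\cO_S)-c_1(S)^2$ eliminates $c_2$. The cobordism/multiplicativity argument of \cite{EGL}, in the two-Hilbert-scheme form using Carlsson--Okounkov's vertex-operator formula for the mixed class, then shows that the generating series factors as $A(t,q)^{\chi(\cO_S)}B(t,q)^{c_1(S)^2}$ for universal power series $A,B\in\Q(t^{1/2})(\!(q^{1/2})\!)$; these are independent of $S$ because the universal polynomials are, and in fact may be pinned down on K3 and toric surfaces even though these are not of general type. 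The main obstacle is the first step --- carrying out the $\C^*$-equivariant virtual localization on the monopole branch and matching each fixed-point contribution to the nested-Hilbert-scheme obstruction theory of this paper, weights included, so that the abstract comparison results apply --- together with the final bookkeeping confirming that the $\chi(\cO_S)$- and $c_1^2$-dependences genuinely exponentiate.
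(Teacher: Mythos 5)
Your proposal is correct and takes essentially the same route as Laarakker's argument in \cite{La1}, which (as the paper indicates) rests on the comparison theorems proved here together with the inductive universality method of \cite{EGL}/\cite{GNY}. The decisive point you identify --- that a fixed-determinant rank-$2$ splitting forces the curve class $\beta=2c_1(M_1)$ to be $2$-divisible, so for minimal general type with $K_S$ not $2$-divisible the only possible Seiberg--Witten basic class is $\beta=0$, killing all $\beta$-dependence and reducing to pure point integrals over $S^{[n_1]}\times S^{[n_2]}$ with $\mathsf{SW}_0=1$ --- is exactly what makes the series collapse to a universal polynomial in $\chi(\cO_S)$ and $c_1(S)^2$ alone and exponentiate to $A^{\chi(\cO_S)}B^{c_1(S)^2}$.
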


Furthermore, K3 calculations \cite{GK, TVW} determine $A(t,q)$ completely in terms of modular forms. And $B(t,q)$ can be predicted by modularity and the results of G\"ottsche-Kool \cite{GK} on the instanton contributions; he checks this prediction in low degree by toric computations.\medskip

\smallskip\noindent\textbf{Acknowledgements.} We thank Martijn Kool and Ties Laarakker for help and useful conversations, Bhargav Bhatt for pointing out the Jouanolou trick, and two thorough referees for useful suggestions. A.G. acknowledges partial support from NSF grant DMS-1406788. R.T. is partially supported by EPSRC grant EP/R013349/1.

\medskip\noindent\textbf{Notation.} Given a map $f\colon X\to Y$, we often use the same letter $f$ to denote its basechange by any map $Z\to Y$, i.e. $f\colon X\times_YZ\to Z$. In particular projections $S\times M\to M$ down the surface $S$ are all denoted by $\pi$. We often suppress pullback maps $f^*$ on sheaves when it shouldn't cause confusion. 

For a vector bundle $B$ over $X$, we denote by $\PP(B)$ the projective bundle of lines in $B$. For a coherent sheaf $\cc F$ on $X$, we denote by $\PP^*(\cc F)=\Proj\Sym^\bu\!\cF$ the projective cone of quotient lines of $\cc F$. When $\cF$ is locally free then of course $\PP^*(\cF)\cong\PP(\cF^*)$.

\subsection{Splitting trick}\label{Jtrick} Since we are ultimately interested in Chern class formulae, we often only care about the topological type of a given algebraic vector bundle. In particular, we will often wish to split nontrivial extensions of vector bundles, which does not change their topological type. One way to do this is by working with $C^\infty$ bundles; we prefer to stay within algebraic geometry and use the \emph{Jouanolou trick} \cite{J} instead. 

Namely, if $Y$ is quasi-projective, there exists an \emph{affine bundle} $\rho\colon\wt Y \to Y$ such that $\wt Y$ is an affine variety. 
Therefore, on pulling locally free sheaves back by $\rho^*$, they become \emph{projective} $\cO_{\wt Y}$-modules and any extensions become split. But $\rho$ is a homotopy equivalence, inducing an isomorphism of Chow groups [Kr, Corollary 2.5.7]
\beq{J=}
\rho^* \colon A_{*} (Y) \rt\sim A_{*+\dim\rho\,}\big(\wt Y\big),
\eeq
so formulae for cycles upstairs on $\wt Y$ induce similar formulae on $Y$.

A very explicit such affine bundle was used in \cite[Section 6.1]{GT1}, and all the necessary pullbacks were laboriously documented. In this paper, for the sake of clarity and economy of notation, we will often just say ``\emph{by the Jouanolou trick}" and give no further details. So we will pretend we are working on $Y$, suppressing mention of the fact that we have replaced $Y$ by its homotopic affine model $\wt Y$ and then used the isomorphism \eqref{J=} to recover results on $Y$ at the end.

\section{Virtual resolutions of degeneracy loci}\label{2}
Fix $X$ a smooth complex quasi-projective variety and
\beq{2term2}
\s\,\colon\ E_0\To E_1
\eeq
a map of vector bundles of ranks $e_0$ and $e_1$ over $X$. For each positive integer $r\le e_0$, we let 
$$
D_r(\s)\ :=\ \big\{ x\in X \colon\dim\ker\;(\s_x) \ge r \big\}
$$
denote the $r$\;th degeneracy locus of $\s$, with scheme structure defined by the vanishing of
\beq{wwe}
\wwedge^{e_0-r+1}\s\,\colon\ \wwedge^{e_0-r+1} E_0\To \wwedge^{e_0-r+1} E_1.
\eeq
This is the $(e_1-e_0+r-1)$th Fitting scheme \cite[Section 20.2]{Ei} of the first cohomology sheaf $h^1(E_\bu)=\coker\sigma$ of the complex $E_\bu$ \eqref{2term2}. In particular it depends on $E_\bu$ only up to quasi-isomorphism, and we often denote it $D_r(E_\bu)$.
Then define
\beq{Grdef}
\wt D_r(E_\bu)\=\wt D_r(\s)\ :=\,\Gr\;(\coker \s^*,r)
\eeq
to be the relative Grassmannian of $r$ dimensional quotients of the cokernel sheaf $h^0(E_\bu^\vee)$ of $\s^*\colon E_1^*\to E_0^*$. This is the moduli space of $r$ dimensional subspaces of fibres $(E_0)_x$ which are annihilated by $\sigma$, in the following sense.

Consider the functor
\begin{eqnarray}\nonumber
\mathrm{Schemes}/X &\To& \mathrm{Sets,} \\
\hspace{15mm} (f\colon S\to X) &\Mapsto& \big\{\mathrm{rank\ }r\mathrm{\ subbundles\ }U\subseteq f^*E_0\mathrm{\ which\ are}
\nonumber \\ \label{funct} && \hspace{14mm}
\mathrm{subsheaves\ of\ }h^0(f^*E_\bu)\subseteq f^*E_0\big\}.
\end{eqnarray}
Let $p\colon\Gr\;(r,E_0)\to X$ denote the relative Grassmannian, with universal subbundle $\cU\subset p^*E_0$.

\begin{prop} \label{pcut}
The functor \eqref{funct} is represented by $\wt D_r(\s)/X$. It embeds
\beq{iota1}
\iota\,\colon\,\wt D_r\ \Into\ \Gr\;(r,E_0)
\eeq
as the zero locus of \,$\wt\sigma\in\Gamma(\cU^*\otimes p^*E_1)$ defined by the composition
\beq{0loc}
\cU\Into p^*E_0\rt{p^*\s}p^*E_1.
\eeq
Finally, the projection $p\colon\wt D_r\to X$ has scheme theoretic image inside $D_r$. 
\end{prop}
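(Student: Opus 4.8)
The plan is to read off all three assertions from the universal property of the relative Grassmannian of quotients of a coherent sheaf, translated through the standard duality between locally free quotients of $E_0^*$ and rank-$r$ subbundles of $E_0$, together with one short exterior-algebra computation for the last sentence. I would first recall the standard fact that for any coherent sheaf $\cF$ on $X$ the relative Grassmannian $\Gr(\cF,r)$ represents the functor sending $(f\colon S\to X)$ to the set of rank-$r$ locally free quotients of $f^*\cF$, and that its formation commutes with arbitrary base change. Applied to $\cF=\coker\s^*=h^0(E_\bu^\vee)$, with its presentation $E_1^*\rt{\s^*}E_0^*\to\coker\s^*\to0$ and using right-exactness of $f^*$ to write $f^*\coker\s^*=\coker(f^*\s^*)$, a rank-$r$ locally free quotient of $f^*\coker\s^*$ is the same datum as a rank-$r$ locally free quotient $Q$ of $f^*E_0^*$ for which $f^*E_1^*\rt{f^*\s^*}f^*E_0^*\to Q$ vanishes. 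Dualising, such a $Q$ is the same as a rank-$r$ subbundle $U:=Q^*\subseteq f^*E_0$, and the vanishing condition becomes the vanishing of $U\into f^*E_0\rt{f^*\s}f^*E_1$, i.e.\ $U\subseteq\ker(f^*\s)=h^0(f^*E_\bu)$. This is precisely the datum of \eqref{funct}, naturally in $S$, so $\wt D_r(\s)=\Gr(\coker\s^*,r)$ represents that functor.

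Next I would read off the embedding \eqref{iota1}--\eqref{0loc}: since $\coker\s^*$ is a quotient of $E_0^*$, and rank-$r$ quotients of $E_0^*$ are canonically rank-$r$ subbundles of $E_0$, the construction realises $\Gr(\coker\s^*,r)$ as the closed subscheme of $\Gr(r,E_0)$ on which the composite of $p^*\s^*$ with the universal quotient $p^*E_0^*\to\cU^*$ vanishes ($\cU\subseteq p^*E_0$ the universal subbundle). This is the zero scheme of the corresponding section of $\hom(p^*E_1^*,\cU^*)=\cU^*\otimes p^*E_1$, which dualises to the composite $\wt\sigma\colon\cU\into p^*E_0\rt{p^*\s}p^*E_1$ of \eqref{0loc}; that both $\Gr(\cF,r)$ and the formation of zero loci commute with base change is exactly what makes this explicit scheme structure agree with the functorial one.

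For the last sentence I would argue as follows: on $\wt D_r=\{\wt\sigma=0\}$ the section $\wt\sigma$ vanishes, so $\cU$ lies in the kernel of $(p^*\s)|_{\wt D_r}$, hence $p^*\s$ factors through the rank-$(e_0-r)$ quotient $p^*E_0/\cU$. Applying $\wwedge^{e_0-r+1}$, the map $\wwedge^{e_0-r+1}(p^*\s)$ factors through $\wwedge^{e_0-r+1}(p^*E_0/\cU)=0$, so $p^*\big(\wwedge^{e_0-r+1}\s\big)=0$ on $\wt D_r$; since by \eqref{wwe} the ideal $\cI_{D_r}\subseteq\cO_X$ is locally generated by the matrix entries of $\wwedge^{e_0-r+1}\s$, these pull back to $0$, whence $\cO_X\to p_*\cO_{\wt D_r}$ kills $\cI_{D_r}$ and $p$ factors through $D_r\into X$, giving scheme-theoretic image inside $D_r$. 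I expect the only point needing care to be in the first two steps — nothing there is deep, but $\coker\s^*$ is genuinely non-locally-free, so one must use the Grassmannian-of-a-sheaf in that generality rather than naively bidualising, and one must check that both $\Gr(\cF,r)$ and the formation of zero loci commute with base change, so that the functor \eqref{funct} really is represented by the scheme $\Gr(\coker\s^*,r)$ with exactly the claimed scheme structure. The exterior-algebra step at the end is routine; it is the usual Thom--Porteous input of \cite[Chapter 14]{Fu}.
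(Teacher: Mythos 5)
Your proof is correct and follows essentially the same route as the paper's: identify the functor represented by $\Gr(\coker\s^*,r)$ with \eqref{funct} via duality between locally free quotients of $E_0^*$ and subbundles of $E_0$, realise $\wt D_r$ inside $\Gr(r,E_0)$ as the zero scheme of $\wt\sigma$, and conclude the last sentence from the exterior-power vanishing $\wwedge^{e_0-r+1}(p^*E_0/\cU)=0$. The only cosmetic difference is that where you invoke the general base-change compatibility of $\Gr(\cF,r)$ and zero loci to identify $\Gr(\coker\s^*,r)$ with $Z(\wt\sigma)$, the paper instead verifies the two containments $\wt D_r\subseteq Z$ and $Z\subseteq\wt D_r$ directly by hand; both are sound.
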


\begin{rmks} Clearly $p\colon\wt D_r(\s)\to D_r(\s)$ is a set-theoretic surjection --- the fibre over $x\in D_r(\s)$ is $\Gr\;(r,\ker\sigma_x)$ --- but it need not be onto as a map of schemes. For instance $x.\id\colon\cO^{\oplus2}\to\cO^{\oplus2}$ on $\C_x$ has $D_1=(x^2=0)\subset\C_x$, the thickened origin, whereas $\wt D_1$ is a reduced $\PP^1$ over the origin.

By the definition of the functor \eqref{funct}, $\wt D_r(\s)$ comes equipped with a universal subbundle $\cU\subseteq p^*E_0$. It is the restriction of the universal subbundle on $\Gr\;(r,E_0)$, and the dual of the universal quotient bundle on $\Gr\;(\coker(\s^*),r)$. Its fibre over the point $\big(x,U\subseteq\ker\;(\s_x)\big)$ is $U$.
\end{rmks}

\begin{proof}
Since $\Gr\;(\mathrm{coker}\,\s^*,r)/X$ represents the functor
\begin{eqnarray}\nonumber
\mathrm{Schemes}/X &\To& \mathrm{Sets,} \\
\hspace{15mm} (f\colon S\to X) &\Mapsto& \big\{\mathrm{rank\ }r\mathrm{\ locally\ free\ quotients\ of\ coker\,}f^*\sigma^*\big\} \label{funct2}
\end{eqnarray}
it is sufficient to prove this is isomorphic to \eqref{funct}. A locally free quotient $Q$ of coker$\,(f^*\s^*)$ gives a complex
\beq{Qeq}
f^*E_1^*\rt{f^*\s^*}f^*E_0^*\To Q\To0
\eeq
which is exact at $Q$. Since $Q$ is locally free, the dual complex
\beq{Ueq}
0\To U\To f^*E_0\rt{f^*\s}f^*E_1,
\eeq
is exact at $U:=Q^*$ \emph{after any basechange} --- thus $U\into f^*E_0$ is a subbundle, giving an element of the set \eqref{funct}. The converse is easier: any complex \eqref{Ueq} with $U$ a subbundle dualises to a complex \eqref{Qeq} exact at $Q:=U^*$.

%
%
\medskip

The functor \eqref{funct} is a subfunctor of the functor taking $f\colon S\to X$ to the set $\{$rank $r$ subbundles $U\subset f^*E_0\}$ represented by $\Gr\;(r,E_0)/X$. This induces an embedding of $X$-schemes $\wt D_r(\s)\into\Gr\;(r,E_0)$ under which the universal subbundle $\cU$ pulls back to the dual $\cQ^*$ of the universal quotient bundle. Its image is contained in the zero locus $Z$ of \eqref{0loc} by applying \eqref{funct} to $S=\wt D_r$. So $\wt D_r(\s)\subseteq Z$.

Conversely, setting $f\colon Z\to X$ to be the projection, the  restriction of \eqref{0loc} to $Z$ gives
a rank $r$ subbundle $\cU|_Z\subset f^*E_0$ which factors through $\ker\;(f^*\s)$ by the definition of $Z$ as the zero locus. This gives an element of the set \eqref{funct}, classified by a map $Z\to \wt D_r(\s)$. The two maps are mutual inverses by construction.

Finally $p^*\s$ factors through $p^*E_0/\cU$  over $Z=\wt D_r(\s)$ by \eqref{0loc}. Therefore
$p^*\wwedge^{e_0-r+1}\s$ \eqref{wwe} factors through 
$\wwedge^{e_0-r+1}p^*E_0/\cU$. But this is zero because $\rk\;(p^*E_0/\cU)=e_0-r$, so $\wt D_r(\s)$ projects into the zero locus $D_r$ of \eqref{wwe}.
\end{proof}

\noindent\textbf{Perfect obstruction theory.} Let $I\subset\cO_{\Gr\;(r,E_0)}$ denote the ideal of $\wt D_r$ generated by $\wt\s$ \eqref{0loc}. Then $\wt D_r$ inherits the standard perfect obstruction theory
\beq{potbarn}
\xymatrix@R=5pt@C=14pt{
\cU\otimes p^*E_1^*|_{\wt D_r} \ar[dd]_{\wt\s} \ar[rr]^-{d\;\widetilde\sigma|_{\wt D_r}}&& \Omega_{\Gr}|_{\wt D_r} \ar@{=}[dd] &&& \LL^{\vir}_{\wt D_r} \ar[dd] \\ &&& \ar@{=}[r]&  \\
I/I^2 \ar[rr]^d&& \Omega_{\Gr}|_{\wt D_r} &&& \LL_{\wt D_r}.\!\!}
\eeq
That is, the complex on the bottom row is a representative of the truncated cotangent complex $\LL_{\wt D_r}$ and the top row is the virtual cotangent bundle of the obstruction theory.

\begin{prop}\label{vcb}
The virtual cotangent bundle $\LL^{\vir}_{\wt D_r}$ is a cone
$$
\Cone\Big\{\cU\otimes p^*E_\bullet^\vee\big|_{\wt D_r}[-1]\To\big(\cU^*\!\otimes\cU\;[-1]\ \oplus\ p^*\Omega_X\big)\big|_{\wt D_r}\Big\}
$$
of virtual dimension $\vd:=\dim X-r(e_1-e_0+r)$. It depends only on the quasi-isomorphism class of the complex $E_\bullet$ \eqref{2term}. The pushforward of the resulting virtual cycle
$$
\big[\wt D_r(E_\bullet)\big]^{\vir}\ \in\ A_{\vd}\big(\wt D_r(E_\bullet)\big)
$$
to $X$ is given by the Thom-Porteous formula
$$
\Delta\;_{r-\op{rk}(E_\bu)}^r\big(c(-E_\bullet)\big)\ \in\ A_{\vd}(X).
$$
\end{prop}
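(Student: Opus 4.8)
The plan is to combine the standard obstruction theory of a zero locus with the two Euler sequences of the Grassmann bundle $\Gr(r,E_0)\xrightarrow{p}X$, and then to compute the pushforward of the virtual cycle in two stages, the second of which is the classical content of \cite[Chapter 14]{Fu}. By Proposition \ref{pcut}, $\wt D_r$ is cut out of the smooth space $\Gr(r,E_0)$ as the zero locus of the section $\wt\sigma$ of $\cU^*\otimes p^*E_1$ defined by \eqref{0loc}, so it carries the standard perfect obstruction theory \eqref{potbarn}; that is, $\LL^{\vir}_{\wt D_r}=\big[\cU\otimes p^*E_1^*\xrightarrow{d\wt\sigma}\Omega_{\Gr}\big]\big|_{\wt D_r}$ in degrees $[-1,0]$, and its virtual dimension is $\dim\Gr(r,E_0)-\rk(\cU^*\otimes p^*E_1)=\big(\dim X+r(e_0-r)\big)-re_1=\dim X-r(e_1-e_0+r)=\vd$.

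To obtain the cone description I would work on $\wt D_r$, where $\wt\sigma=0$, and feed into \eqref{potbarn} the exact sequences
$$0\to p^*\Omega_X\to\Omega_{\Gr}\to\Omega_{\Gr/X}\to0,\qquad 0\to\Omega_{\Gr/X}\to\cU\otimes p^*E_0^*\xrightarrow{\,g\,}\cU^*\otimes\cU\to0,$$
the second being the dual tautological sequence $0\to\cQ^*\to p^*E_0^*\to\cU^*\to0$ tensored with $\cU$; thus $\Omega_{\Gr/X}$ is resolved by $[\cU\otimes p^*E_0^*\xrightarrow{g}\cU^*\otimes\cU]$ and $\Omega_{\Gr}$ is its extension by $p^*\Omega_X$. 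One then splits $d\wt\sigma$ into its vertical part — on the resolution this is $\id_{\cU}\otimes p^*\sigma^*\colon\cU\otimes p^*E_1^*\to\cU\otimes p^*E_0^*$, landing in $\ker g=\Omega_{\Gr/X}$ precisely because $\wt\sigma=0$ — and its horizontal part $\cU\otimes p^*E_1^*\to p^*\Omega_X$, the derivative of $\wt\sigma$ along $X$ (canonical along the zero locus). Splicing these presents $\LL^{\vir}_{\wt D_r}$ as the total complex of $\cU\otimes p^*E_1^*\xrightarrow{\id\otimes p^*\sigma^*}\cU\otimes p^*E_0^*\xrightarrow{g}\cU^*\otimes\cU$ together with the map from its leftmost term to $p^*\Omega_X$ — which is exactly $\Cone\big\{\cU\otimes p^*E_\bullet^\vee[-1]\to(\cU^*\otimes\cU[-1]\oplus p^*\Omega_X)\big\}\big|_{\wt D_r}$, since $\cU\otimes p^*E_\bullet^\vee[-1]$ is the two-term complex $[\cU\otimes p^*E_1^*\xrightarrow{\id\otimes\sigma^*}\cU\otimes p^*E_0^*]$; surjectivity of $g$ is what makes the cone have amplitude $[-1,0]$. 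Quasi-isomorphism invariance then follows: with the model $\wt D_r(E_\bullet)=\Gr(\coker\sigma^*,r)$ of \eqref{Grdef}, a quasi-isomorphism $E_\bullet\simeq E'_\bullet$ induces an isomorphism $\coker\sigma^*\cong\coker(\sigma')^*$ on $h^0$ of the duals, hence an isomorphism $\wt D_r(E_\bullet)\cong\wt D_r(E'_\bullet)$ matching the $\cU$'s, together with a quasi-isomorphism $\cU\otimes p^*E_\bullet^\vee\simeq\cU\otimes p^*(E'_\bullet)^\vee$; since cones of quasi-isomorphic maps agree, the two obstruction theories and their virtual cycles coincide.

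For the Thom--Porteous formula I would push $[\wt D_r(E_\bullet)]^{\vir}$ forward along $\wt D_r\xrightarrow{\iota}\Gr(r,E_0)\xrightarrow{p}X$. Since $\wt D_r$ is the zero locus of $\wt\sigma$ with its standard obstruction theory, the principle that the virtual class of a zero locus pushes to the Euler class gives $\iota_*[\wt D_r(E_\bullet)]^{\vir}=c_{re_1}(\cU^*\otimes p^*E_1)\cap[\Gr(r,E_0)]$, a class of dimension $\vd$. Pushing this forward along $p$ is exactly the integral over a Grassmann bundle evaluated in \cite[Chapter 14]{Fu}: expanding $c_{re_1}(\cU^*\otimes p^*E_1)=\prod_{i=1}^{r}\prod_{j=1}^{e_1}(b_j-u_i)$ in the Schubert basis of $A_*(\Gr(r,E_0))$ over $A_*(X)$ and applying the Grassmann-bundle pushforward formula produces the determinant $\Delta^r_{e_1-e_0+r}\big(c(E_1-E_0)\big)=\Delta^r_{r-\rk(E_\bullet)}\big(c(-E_\bullet)\big)$, as claimed.

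I expect the main obstacle to be the cone identification: making it a genuine quasi-isomorphism compatible with the map to $\LL_{\wt D_r}$, with correct bookkeeping of the extension $0\to p^*\Omega_X\to\Omega_{\Gr}\to\Omega_{\Gr/X}\to0$ and of the horizontal/vertical splitting of $d\wt\sigma$. A clean way to sidestep the extension is to split it first via the Jouanolou trick of Section \ref{Jtrick}, at the harmless cost — for the Chern-class consequences — of only proving the statement after that homotopy equivalence. The final pushforward identity is classical, but it is the computational core of a self-contained argument.
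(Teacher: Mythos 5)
Your overall strategy matches the paper's: start from the standard perfect obstruction theory of the zero locus of $\wt\sigma$ on $\Gr(r,E_0)$, use the two Euler-type sequences of the Grassmann bundle to rewrite the virtual cotangent bundle, and compute the pushforward via the localized Euler class followed by \cite[Chapter 14]{Fu}. The Thom--Porteous step and the virtual-dimension count are exactly as in the paper.

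The gap is precisely where you flag it: your cone identification rests on splitting $d\wt\sigma$ into a ``vertical part'' $\cU\otimes p^*E_1^*\to\Omega_{\Gr/X}$ and a ``horizontal part'' $\cU\otimes p^*E_1^*\to p^*\Omega_X$. The vertical part (the composition with $\Omega_{\Gr}\to\Omega_{\Gr/X}$) is canonical, but the horizontal part requires a retraction $\Omega_{\Gr}\to p^*\Omega_X$ of the extension $0\to p^*\Omega_X\to\Omega_{\Gr}\to\Omega_{\Gr/X}\to0$, which does not exist in general. Invoking the Jouanolou trick would split this, but then you only obtain the cone description after pulling back to an affine bundle, which is weaker than the stated proposition (though adequate for the Chern-class corollaries). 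The paper proves the cone description on $\wt D_r$ itself by a different route: it first adds the acyclic column $\cU\otimes p^*E_0^*=\cU\otimes p^*E_0^*$ to get the total complex \eqref{mod3}, so that the question becomes whether $\Cone(\phi)$ splits as $\cU^*\otimes\cU\oplus p^*\Omega_X[1]$, and then shows the relevant connecting map $\cU\otimes\cU^*\to p^*\Omega_X[2]$ vanishes because the relative Atiyah class $\At_{\Gr/X}(\cU)$ lifts to the absolute Atiyah class $\At_{\Gr}(\cU)$, whence the composition factors through two consecutive arrows of an exact triangle. This is a genuinely different (and sharper) mechanism than splitting $\Omega_{\Gr}$. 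You should either reproduce the Atiyah-class argument, or explicitly downgrade the statement to ``after passing to an affine bundle'' and note that this suffices for the Chern-class consequences.
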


\begin{proof}
Modifying $\LL^{\vir}_{\wt D_r}$ \eqref{potbarn} by an acyclic complex, it is quasi-isomorphic to the total complex
\beq{mod3}
\xymatrix@R=18pt@C=40pt{
\cU\otimes p^*E_1^*|_{\wt D_r} \ar[d]_{\id_{\cU\!}\otimes\!}^{\!p^*\sigma^*} \ar[r]^-{d\;\widetilde\sigma|_{\wt D_r}}& \Omega_{\Gr}|_{\wt D_r} \ar[d]^\phi \\
\cU\otimes p^*E_0^*|_{\wt D_r} \ar@{=}[r]& \cU\otimes p^*E_0^*|_{\wt D_r},\!\!}
\eeq
where $\phi$ is the composition $\Omega_{\Gr}\cong\cU\otimes(p^*E_0/\cU)^*\to\cU\otimes p^*E_0^*$. This diagram was shown to commute in \cite[Claim 2, proof of Theorem 3.6]{GT1}.

Therefore $\LL^{\vir}_{\wt D_r}\cong\Cone\!\big(\cU\otimes p^*E_\bu^\vee|_{\wt D_r}\To\Cone(\phi)\big)[-1]$. To determine $\Cone(\phi)$ we fit $\phi$ into the following commutative diagram, all of whose rows and columns are exact triangles:
\beq{tris}
\xymatrix@R=16pt@C=20pt{
p^*\Omega_X|_{\wt D_r} \ar[r]& \Omega_{\Gr}|_{\wt D_r} \ar[r]\ar[d]_\phi& \Omega_{\Gr\!/X}|_{\wt D_r} \ar[d] \\
& \cU\otimes p^*E_0^*|_{\wt D_r} \ar[d]\ar@{=}[r]& \cU\otimes p^*E_0^*|_{\wt D_r} \ar[d] \\
& \Cone(\phi) \ar[r]& \cU\otimes\cU^*|_{\wt D_r} \ar[r]& p^*\Omega_X|_{\wt D_r}[2].\!}
\eeq
We calculate the lower right hand arrow. By definition it is the composition from the top left to the bottom right of
\beq{tri2}
\xymatrix@R=14pt{
\cU\otimes\cU^*|_{\wt D_r} \ar[r]& \Omega_{\Gr\!/X}|_{\wt D_r}[1] \ar@{=}[d]\ar[r]& \cU\otimes p^*E_0^*|_{\wt D_r}[1] \\
\Omega_{\Gr}|_{\wt D_r}[1] \ar[r]& \Omega_{\Gr\!/X}|_{\wt D_r}[1] \ar[r]& p^*\Omega_X|_{\wt D_r}[2].\!}
\eeq
Here the upper row is the rightmost vertical exact triangle in \eqref{tris}, while the lower row is the exact triangle along the top row of \eqref{tris}.
The first arrow in \eqref{tri2} is well known to be the relative (to $X$) Atiyah class of the bundle $\cU$. So using its absolute Atiyah class we get the commutative diagram
$$
\xymatrix@R=16pt@C=50pt{
\cU\otimes\cU^*|_{\wt D_r} \ar[r]^-{\At_{\Gr\!/X}(\cU)}\ar[d]_{\At_{\Gr}(\cU)}& \Omega_{\Gr\!/X}|_{\wt D_r}[1] \ar@{=}[d]\ar[r]& \cU\otimes p^*E_0^*|_{\wt D_r}[1] \\
\Omega_{\Gr}|_{\wt D_r}[1] \ar[r]& \Omega_{\Gr\!/X}|_{\wt D_r}[1] \ar[r]& p^*\Omega_X|_{\wt D_r}[2]}
$$
which shows the composition $\cU\otimes\cU^*\to p^*\Omega_X[2]$ is zero. Therefore the bottom row of \eqref{tris} gives $\Cone(\phi)\cong\cU\otimes\cU^*\oplus p^*\Omega_X|_{\wt D_r}[1]$, implying the claimed result. \medskip

Finally, since the perfect obstruction theory arises from $\wt D_r$'s description as the zero locus of a section of the bundle $\cU^*\otimes p^*E_1\to\Gr$, the resulting virtual cycle is the localised top Chern class \cite[Section 14.1]{Fu} of that bundle. Its pushforward to $\Gr$ is
\beq{cUE}
c_{\;re_1}(\cU^*\otimes p^*E_1);
\eeq
further pushing down $p\colon\Gr\to X$ shows the pushforward of $[\wt D_r]^{\vir}$ to $X$ is given by 
$$
p_*\;c_{\;r\;e_1}(\cU^*\otimes p^*E_1)\=\Delta\;_{e_1-e_0+r}^r\big(c(-E_\bullet)\big)
$$
by \cite[Theorem 14.4]{Fu}.
\end{proof}

This proves Proposition \ref{mainp} in the Introduction. But in passing from $\Gr\;(r,E_0)$ down to $X$ we have contracted $\wt D_r$ back to $D_r$  and so lost information in general. (The exception is the case studied in \cite{GT1} where $r$ is the largest integer for which $D_r$ is nonempty; then $p|_{\wt D_r}$ is an embedding.)

We could work in $\Gr\;(r,E_0)$, in which the class of the virtual cycle is $c_{\;r\;e_1}(\cU^*\otimes p^*E_1)$, but this is not a quasi-isomorphism invariant of the complex $E_\bu$. In examples such as (\ref{**}, \ref{embedB}) we can replace $E_0$ by a more canonical bundle $B$, so now we assume we have chosen $B$ such that
\beq{inx}
h^0(E_\bu|_x)\Into B_x \quad\text{for each }x\in D_r.
\eeq
For basechange reasons the precise condition is most easily stated via duals. \smallskip

\noindent\textbf{Choice.} \emph{We choose a vector bundle $B$ on $X$ and a surjection} 
\beq{Esurj}
B^*\big|_{D_r}\To h^0\big(E_\bu^\vee\big)\big|_{D_r}\To0.
\eeq
\emph{For some purposes we will require the surjection \eqref{Esurj} extends to all of $X$,}
\beq{Esurj2}
B^*\To h^0\big(E_\bu^\vee\big)\To0.
\eeq
\emph{For instance if $r=1$ this is immediate from \eqref{Esurj}.}\medskip

Note that by restricting both exact sequences $E_1^*\to E_0^*\to h^0(E_\bu^\vee)\to0$ and \eqref{Esurj} to $x\in D_r$ and dualising we recover \eqref{inx}.

\medskip
The surjection $B^*|_{D_r}\to(\coker\s^*)|_{D_r}$ of \eqref{Esurj} induces an embedding $\Gr\;(\coker\s^*,r)\subset\Gr\;(B^*,r)$ since $\Gr\;(\coker\s^*,r)$ lies over $D_r\subset X$. By Proposition \ref{pcut} this is an embedding
\beq{iot2}
\iota\_{B}\,\colon\ \wt D_r\Into\Gr\;(r,B)\rt{q}X
\eeq
such that $\iota_B^*\;\cU_B\cong\iota^*\cU$. Here $\cU_B$ and $\cQ_B=q^*B/\cU_B$ denote the universal sub- and quotient bundles on $\Gr\;(r,B)$, and $\iota\,\colon\,\wt D_r\into\Gr\;(r,E_0)$ is the embedding \eqref{iota1}. We will compare these via a third embedding,
$$
I:=\iota\times\_X\iota\_{B}\ \colon\ \wt D_r\Into
\Gr\;(r,E_0)\times\_X\Gr\;(r,B),
$$
fitting into the Cartesian diagram
\beq{dgr}
\xymatrix@R=20pt{
\wt D_r\times\_X\Gr\;(r,B) \INTO^-{\iota\times1}\ar[d]_q& \Gr\;(r,E_0)\times\_X\Gr\;(r,B) \ar[d]^{q\_0} \\
\wt D_r\! \ar[ur]-<15pt,13pt><1ex>^(.45)I\ar@/^{-2ex}/[u]_s \INTO^(.45){\iota}& \Gr\;(r,E_0).}
\eeq
Here the vertical maps are flat and the section $s:=\id_{\wt D_r}\!\times\_{\!X\,}\iota\_{B}$. As usual we suppress various pullback or basechange maps.

\begin{lem} \label{s*s*} The section $s$ in \eqref{dgr} is a regular embedding, with image cut out by the regular section of $\cU^*\!\otimes\cQ_B$ defined by the composition
\beq{compU}
\cU\Into B\To\hspace{-5.5mm}\To\cQ_B \qquad\mathrm{on}\quad \wt D_r\times\_X\Gr\;(r,B).
\eeq
\end{lem}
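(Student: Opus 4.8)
The plan is to verify the two assertions — that $s$ is a regular embedding, and that its image is the zero locus of the displayed section $\mu$ of $\cU^*\otimes\cQ_B$ — by working on the flat (indeed smooth, since $q$ is a Grassmann bundle) ambient space $\wt D_r\times_X\Gr(r,B)$ over $\wt D_r$. First I would describe $\mu$ precisely: on $\wt D_r\times_X\Gr(r,B)$ we have the two tautological subbundles $\cU\subseteq q^*E_0$ (pulled back from $\wt D_r\subseteq\Gr(r,E_0)$) and $\cU_B\subseteq q^*B$, and the chosen surjection \eqref{Esurj2} (valid on all of $X$ when $r=1$, and assumed in general) dualises to an inclusion $h^0(E_\bu|_x)\hookrightarrow B_x$, i.e. a map of sheaves that carries $\cU\hookrightarrow q^*E_0$ into $q^*B$ compatibly with the fibrewise condition $U\subseteq\ker\sigma_x = h^0(E_\bu|_x)$. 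Composing $\cU\hookrightarrow q^*B$ with the quotient $q^*B\twoheadrightarrow\cQ_B$ gives the section $\mu\in\Gamma(\cU^*\otimes\cQ_B)$. The key fibrewise picture: over a point $(x,U\subseteq\ker\sigma_x,\,W\in\Gr(r,B_x))$, $\mu$ vanishes exactly when the image of $U$ under $U\subseteq h^0(E_\bu|_x)\subseteq B_x$ lands inside $W$; since both $U$ and $W$ have dimension $r$, this forces $W$ to equal the image of $U$. That image is precisely the fibre of $\iota_B^*\cU_B$, so the zero locus of $\mu$ is exactly the graph of $\iota_B\colon\wt D_r\to\Gr(r,B)$, which is $s(\wt D_r)$.

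Next I would upgrade this set-theoretic statement to a scheme-theoretic one and establish regularity. The cleanest route is to recognise $s$ as a section of the Grassmann bundle $q\colon\wt D_r\times_X\Gr(r,B)\to\wt D_r$ pulled back from $\Gr(r,B)\to X$, and to recall the standard fact that a section of a $\Gr(r,B)$-bundle determined by a sub-bundle $V\subseteq$ (the rank-$\mathrm{rk}\,B$ bundle) sits inside $\Gr(r,B)$ as the zero locus of the composite $\cU_B\hookrightarrow q^*B\twoheadrightarrow q^*B/V$; here $V=\iota_B^*\cU_B$ pulled back, i.e. the image of $\cU$. Dualising, or equivalently using the symmetric incidence description, one gets that the graph of $\iota_B$ is the zero scheme of $\cU\hookrightarrow q^*B\twoheadrightarrow\cQ_B$, which is $\mu$. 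Since $\cU^*\otimes\cQ_B$ has rank $r(\mathrm{rk}\,B-r)$, which equals the relative dimension $\dim q$ of the Grassmann bundle, and since the zero locus $s(\wt D_r)$ has codimension exactly $\dim q$ in $\wt D_r\times_X\Gr(r,B)$ (it is a section, hence isomorphic to the base), the section $\mu$ is regular and $s$ is a regular embedding. I would also note this is consistent with the general principle that a section of a vector bundle whose zero locus has the expected codimension is automatically a regular section.

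The main obstacle is the scheme-theoretic comparison of the ideal sheaf generated by $\mu$ with the ideal of the graph $s(\wt D_r)$ — i.e. checking that $\mu$ cuts out $s(\wt D_r)$ with its \emph{reduced-from-the-section} structure and not some thickening, and that the relevant maps of sheaves $\cU\to q^*B$ are honestly defined on the nose (not just up to the fibrewise description). This requires care because the inclusion $h^0(E_\bu|_x)\hookrightarrow B_x$ comes from dualising \eqref{Esurj2}, and one must confirm that $\cU$, which a priori only maps to $q^*E_0$, genuinely factors through the image of $h^0(E_\bu^\vee)^\vee$ inside $q^*B$ — this is exactly the content of Proposition \ref{pcut}, namely that over $\wt D_r$ the sub-bundle $\cU$ is a subsheaf of $h^0(E_\bu)$, so the composite $\cU\to h^0(E_\bu)\to B$ makes sense, and over $\Gr(r,B)$ the incidence locus is classically cut out by the displayed composition. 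Once these identifications are pinned down, the regularity and the codimension count are immediate, and the dimension bookkeeping $\dim(\cU^*\otimes\cQ_B)=r(\mathrm{rk}\,B-r)=\dim q$ closes the argument.
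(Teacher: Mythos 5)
Your argument takes a different route from the paper: you invoke a classical incidence description of graphs of sections of Grassmann bundles (the graph of $\iota_B$ is cut out by $\cU\hookrightarrow q^*B\twoheadrightarrow\cQ_B$), whereas the paper proves the scheme-theoretic identity $Z(\mu)=s(\wt D_r)$ by hand using the functor of points: one shows any $T\to Z$ factors uniquely through $s$ by comparing the two short exact sequences $0\to\cU\to B\to\cQ_B\to0$ and $0\to\cU_B\to B\to\cQ_B\to0$ and noting the induced map $\cU\to\cU_B$ is an isomorphism. Your incidence fact is true and can be checked in charts, but it is not especially "standard" in the form you cite, so if you go this way you should prove it — at which point the two arguments are essentially the same amount of work.

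There is a genuine gap at the end. Your closing "general principle that a section of a vector bundle whose zero locus has the expected codimension is automatically a regular section" is false without an ambient Cohen--Macaulay hypothesis. For instance on $\Spec k[x,y]/(x^2,xy)$ the function $y$ has zero locus of codimension one, but $y$ is a zerodivisor and hence not a regular section of $\cO$. The space $\wt D_r\times_X\Gr(r,B)$ here is a Grassmann bundle over $\wt D_r$, and $\wt D_r$ is a fairly arbitrary zero locus (of $\wt\sigma$ on $\Gr(r,E_0)$), so you cannot assume Cohen--Macaulayness. The correct argument is: $s$ is a section of the smooth morphism $q$, hence a regular closed embedding of codimension $\dim q=r(\rk B-r)$ for free, with no codimension count needed; then, since you have shown $\mu$ generates the same ideal $I_{s(\wt D_r)}$ with exactly $r(\rk B-r)$ entries, those entries form a regular sequence by the standard commutative-algebra fact that any minimal-length generating set of an ideal generated by a regular sequence is itself a regular sequence (Matsumura, Thm.~16.3). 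So $\mu$ is a regular section. Your proof gestures at "$s$ is a section, hence isomorphic to the base" but does not invoke the smoothness of $q$, which is what actually carries the regularity; the codimension count by itself carries none.
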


\begin{proof}
Since $s^*q^*\iota^*\cU=\iota^*\cU=s^*\cU_B$ on $\wt D_r\times\_X\Gr\;(r,B)$ we see that $s^*$ applied to \eqref{compU} gives the same as $s^*$ applied instead to the composition
$$
\cU_B\Into B\To\hspace{-5.5mm}\To \cQ_B \qquad\mathrm{on}\quad \wt D_r\times\_X\Gr\;(r,B).
$$
This is zero by the definition of $\cQ_B$, so $s(\wt D_r)$ lies in the zero locus $Z$ of \eqref{compU}. We are left with showing that $Z\subseteq  s(\wt D_r)$.

By the definition of $Z$, pulling back the composition \eqref{compU} by $\iota\_Z\colon Z\into\wt D_r\times\_X\Gr\;(r,B)$ shows the composition along the top row of the following diagram is zero. This uniquely fills in the dotted arrow $\phi$ to the lower short exact sequence.
$$
\xymatrix@R=16pt{
0 \ar[r]& \iota_Z^*\cU \ar[r]\ar@{..>}[d]_\phi& \iota_Z^*B \ar[r]\ar@{=}[d]& \iota_Z^*\cQ_B \ar[r]& 0 \\
0 \ar[r]& \iota_Z^*\cU_B \ar[r]& \iota_Z^*B \ar[r]& \iota_Z^*\cQ_B \ar[r]& 0.}
$$
Since $\phi$ is a vector bundle injection it is an isomorphism. And since $\cU=\iota_B^*\;\cU_B=q^*s^*\cU_B$ is pulled back from $\wt D_r$, this gives
$$
\xymatrix@R=16pt{
\iota_Z^*\;q^*s^*\cU_B \INTO\ar@{=}[d]_{\phi\;}& \iota_Z^*B \ar@{=}[d] \\
\iota_Z^*\;\cU_B \INTO& \iota_Z^*B.\!}
$$
Thus this subbundle is represented by both of the maps $\iota\_Z$ and $s\circ q\circ\iota\_Z$, but any subbundle is classified by a unique map $Z\to\wt D_r\times\_X\Gr\;(r,B)$. So $\iota\_Z=s\circ q\circ\iota\_Z$, which gives $Z\subseteq s(\wt D_r)$.

Since codim\,$(s(\wt D_r))=\dim\Gr\;(r,B)-\dim X=\rk\,(\cU^*\!\otimes\!\cQ_B)$ it follows that $s$ is a regular embedding.
\end{proof}

\begin{thm} \label{bar} Let $b:=\rk B$. The pushforward of $[\wt D_r]^{\vir}$ to $\Gr\;(r,B)$ is given by 
$$
\iota\_{B*}[\wt D_r]^{\vir}\ =\ 
\Delta^{r}_{b+e_1-e_0}\big(c\big(\cQ_B-E_\bu\big)\big).
$$
If \eqref{Esurj2} holds this also equals
\beq{leo}
c\_{r(b-e_0+e_1)}\big(\cU_B^*\;\otimes(B-E_\bu)\big).
\eeq
\end{thm}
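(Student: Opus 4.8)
\emph{Proof plan.} The strategy is to push the formula \eqref{cUE} for $\iota_*[\wt D_r]^{\vir}$ around the Cartesian square \eqref{dgr} using Lemma \ref{s*s*}, and then apply the Thom--Porteous pushforward \cite[Theorem~14.4]{Fu} once more; the second identity will then follow from a Chern class manipulation that becomes available under \eqref{Esurj2}. Throughout I suppress pullbacks as in the paper, and write $Z:=\wt D_r\times_X\Gr(r,B)$ and $Z':=\Gr(r,E_0)\times_X\Gr(r,B)$.

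\emph{Step 1.} Flat base change along $q_0$ in the upper square of \eqref{dgr} turns \eqref{cUE} into $(\iota\times1)_*\,q^*[\wt D_r]^{\vir}=c_{re_1}(\cU^*\otimes E_1)\cap[Z']$. By Lemma \ref{s*s*} together with the fact that $s$ is a section of the flat map $q$, one has $s_*[\wt D_r]^{\vir}=c_{r(b-r)}(\cU^*\otimes\cQ_B)\cap q^*[\wt D_r]^{\vir}$. Pushing this forward by $\iota\times1$, using $(\iota\times1)\circ s=I$, the fact that $\cU$ on $Z$ is restricted from the tautological subbundle on $Z'$, and the projection formula, I get
$$
I_*[\wt D_r]^{\vir}\ =\ c_{r(b-r)}(\cU^*\otimes\cQ_B)\,c_{re_1}(\cU^*\otimes E_1)\cap[Z']\ =\ c_{\rk F}(\cU^*\otimes F)\cap[Z'],
$$
where $F:=\cQ_B\oplus q^*E_1$ and the last equality holds because the top Chern class of a direct sum is the product of the top Chern classes.

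\emph{Step 2.} As $Z'\to\Gr(r,B)$ is the Grassmann bundle $\Gr(r,q^*E_0)$ with tautological subbundle $\cU$, and $\iota_B$ is this projection precomposed with $I$, applying \cite[Theorem~14.4]{Fu} to $c_{\rk F}(\cU^*\otimes F)$ --- exactly as in the proof of Proposition \ref{vcb} --- gives
$$
\iota_{B*}[\wt D_r]^{\vir}\ =\ \Delta^r_{(b-r)+e_1-e_0+r}\big(c(F-q^*E_0)\big)\ =\ \Delta^r_{b+e_1-e_0}\big(c(\cQ_B-E_\bu)\big),
$$
using $c(F-q^*E_0)=c(\cQ_B)c(E_1)c(E_0)^{-1}=c(\cQ_B-E_\bu)$. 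This is the first formula.

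\emph{Step 3 and the main obstacle.} Assume \eqref{Esurj2}, so $B^*\to h^0(E_\bu^\vee)\to0$ on $X$; let $K$ be its kernel, so that $[B-E_\bu]=[K^\vee]+[(\ker\sigma^*)^\vee]$ in K-theory. When $\sigma^*$ is injective as a sheaf map, $h^0(E_\bu^\vee)=\coker\sigma^*$ has homological dimension $\le1$, hence $K$ is locally free of rank $b-e_0+e_1$ with $[K^\vee]=[B-E_\bu]$, and the dual of $K\into B^*$ is a two-term complex $[B\to K^\vee]$ quasi-isomorphic to $E_\bu$; Proposition \ref{vcb}/\eqref{cUE} applied to it --- its ``$\Gr(r,E_0)$'' now being literally $\Gr(r,B)$ --- gives $\iota_{B*}[\wt D_r]^{\vir}=c_{r(b-e_0+e_1)}(\cU_B^*\otimes K^\vee)=c_{r(b-e_0+e_1)}(\cU_B^*\otimes(B-E_\bu))$, which also matches the $\Delta$-symbol above. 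In general it suffices to establish the identity $\Delta^r_{b+e_1-e_0}\big(c(\cQ_B-E_\bu)\big)=c_{r(b-e_0+e_1)}\big(\cU_B^*\otimes(B-E_\bu)\big)$ on $\Gr(r,B)$: writing $\cQ_B-E_\bu=(B-E_\bu)-\cU_B$ and using $c(\cU_B)c(\cQ_B)=c(B)$ with $\rk\cU_B=r$, this is the Giambelli-type identity $\Delta^r_v\big(c(\cV-\cU_B)\big)=c_{rv}(\cU_B^*\otimes\cV)$ for a \emph{genuine} rank-$v$ bundle $\cV$, $v=b-e_0+e_1$, and \eqref{Esurj2} is precisely what lets $B-E_\bu$ be represented by such a bundle. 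I expect the main obstacle to be exactly this last reduction when $K$ fails to be locally free --- i.e. isolating how \eqref{Esurj2} (rather than merely \eqref{Esurj}) forces the Giambelli identity on $\Gr(r,B)$; Steps 1 and 2 are routine bookkeeping once \eqref{cUE} and Lemma \ref{s*s*} are granted.
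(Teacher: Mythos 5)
Your Steps 1 and 2 reproduce the paper's computation essentially verbatim: base change around \eqref{dgr}, apply Lemma \ref{s*s*} to write $I_*[\wt D_r]^{\vir}$ as the top Chern class of $\cU^*\otimes(\cQ_B\oplus E_1)$ on $\Gr(r,E_0)\times_X\Gr(r,B)$, then push down to $\Gr(r,B)$ by the Thom--Porteous formula. This part is correct and is the paper's route.

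Step 3 names the right target --- write $[B-E_\bu]$ as the class of an honest rank $b-e_0+e_1$ bundle so that \cite[Example~14.4.12]{Fu} converts the $\Delta$-symbol $\Delta^r_{b+e_1-e_0}(c(C-\cU_B))$ into $c_{r(b+e_1-e_0)}(\cU_B^*\otimes C)$ --- but the mechanism you propose does not close in general, and you say as much. Your bundle candidate is the kernel $K=\ker(B^*\twoheadrightarrow h^0(E_\bu^\vee))$. When $\sigma^*$ is injective this works, since then $h^0(E_\bu^\vee)=\coker\sigma^*$ has projective dimension $\le 1$ and dimension-shifting makes $K$ locally free. But in general $\ker\sigma^*\ne0$, so $\coker\sigma^*$ need not have projective dimension $\le1$, and $K$ is just a torsion-free subsheaf of $B^*$ with no reason to be locally free --- nor does the Jouanolou trick help here, since pulling back to an affine bundle makes $B^*$ projective as a module but does not make submodules of it projective.

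The paper's resolution is a different and cleaner move. Apply Jouanolou so that $B^*$ is a projective $\cO$-module, and lift the surjection $B^*\twoheadrightarrow\coker(\sigma^*)$ through $E_0^*\twoheadrightarrow\coker(\sigma^*)$ to a map $\psi\colon B^*\to E_0^*$ (this lift is exactly where projectivity of $B^*$ is used; see \eqref{Lift}). Dualising and restricting to each point $x$, \eqref{Esurj2} gives an injection $h^0(E_\bu|_x)\into B_x$ for all $x\in X$, which is precisely the statement that $E_0\rt{(\sigma,\psi^*)}E_1\oplus B$ has zero kernel fibrewise, i.e.\ is a \emph{subbundle} map. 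Its cokernel $C$ is then an honest vector bundle of rank $b-e_0+e_1$ with $[C]=[B-E_\bu]$ in K-theory, and \cite[Example~14.4.12]{Fu} finishes: $\Delta^r_{b+e_1-e_0}(c(C-\cU_B))=c_{r(b-e_0+e_1)}(\cU_B^*\otimes C)$. The point is to realise $B-E_\bu$ as a \emph{cokernel} of an injective bundle map built from the lift $\psi$, rather than as (the dual of) a kernel of the surjection \eqref{Esurj2} itself; the former is automatically locally free under the pointwise-injectivity that \eqref{Esurj2} encodes, whereas the latter is not.
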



\begin{proof}
By Proposition \ref{pcut} and flat basechange around the diagram \eqref{dgr},
$$
(\iota\times1)_*\;q^*[\wt D_r]^{\vir}\=q_0^*\,\iota\_{*}[\wt D_r]^{\vir}\=q_0^*\;c\_{re_1}(\cU^*\otimes E_1).
$$
Therefore Lemma \ref{s*s*} gives
\beqa
I_*[\wt D_r]^{\vir} &=& (\iota\times1)_*s_*[\wt D_r]^{\vir}
\\ &=& (\iota\times1)_*s_*s^*q^*[\wt D_r]^{\vir} \\ &=&
(\iota\times1)_*\big(c\_{r(b-r)}(q^*\cU^*\otimes\cQ_B)\cap q^*[\wt D_r]^{\vir}\big) \\
&=& c\_{r(b-r)}(q_0^*\cU^*\otimes\cQ_B)\cap(\iota\times1)_*\;q^*[\wt D_r]^{\vir} \\
&=& c\_{r(b-r)}(q_0^*\cU^*\otimes\cQ_B)\cap c\_{re_1}(q_0^*\cU^*\otimes E_1) \\
&=& c\_{r(b-r+e_1)}\big(q_0^*\cU^*\otimes(\cQ_B\oplus E_1)\big).
\eeqa
Pushing down $p\colon\Gr\;(r,E_0)\times\_X\Gr\;(r,B)\to\Gr\;(r,B)$ gives 
\beq{toob}
\iota\_{B*}[\wt D_r]^{\vir}\=\Delta^{r}_{b+e_1-e_0}\big(c\big(\cQ_B+E_1-E_0\big)\big)
\eeq
by \cite[Proposition 14.2.2]{Fu}.

To prove \eqref{leo} we need to lift the surjection $B^*\to\coker(\s^*)$ of \eqref{Esurj2} to a map $\psi\colon B^*\to E_0^*$; see \eqref{Lift} below for how to do this after using the Jouanolou trick of Section \ref{Jtrick}. This then gives $h^0(E_\bu|_x)\into B_x$ for all $x\in X$, which means that
$$
E_0\rt{(\s,\psi^*)}E_1\oplus B
$$
is a subbundle. Therefore its cokernel $C$ is a rank $b+e_1-e_0$ \emph{vector bundle} over $X$, equivalent in K-theory to $B-E_\bu$. Thus we can apply \cite[Example 14.4.12]{Fu} to rewrite \eqref{toob} as
\beqa
\iota\_{B*}[\wt D_r]^{\vir} &=& \Delta^{r}_{b+e_1-e_0}\big(c\big(C-\cU_B\big)\big) \\
&=& c\_{r(b+e_1-e_0)}\big(\cU_B^*\;\otimes C\big) \\
&=& c\_{r(b+e_1-e_0)}\big(\cU_B^*\;\otimes(B-E_\bu)\big). 
\eeqa
This formula should be compared with \eqref{cUE}.
\end{proof}


\subsection*{Digression: description as a deepest degeneracy locus}
As usual fix a smooth quasi-projective variety $X$ carrying
a 2-term complex of bundles
$$
\sigma\colon E_0\To E_1.
$$
Almost by definition the virtual resolution $\wt D_r$ of the $r$th degeneracy locus can be seen as the \emph{deepest degeneracy locus} of the complex $\cU\to p^*E_1$ \eqref{vanloc} up on $\Gr\;(r,E_0)$. Such deepest degeneracy loci (those for which $r$ is maximal --- i.e. $D_{r+1}$ is empty) were studied in \cite{GT1}.

Since $E_0$ is not a quasi-isomorphism invariant we would again prefer to replace  $\Gr\;(r,E_0)$ with (an affine bundle over) $\Gr\;(r,B)$.
For this we require \eqref{Esurj2} to hold, giving
\beq{Lift}
\xymatrix@R=18pt{&& B^* \ar@{->>}[d]\ar@{-->}[dl]_\psi \\
E_1^* \ar[r]^{\s^*}& E_0^* \ar[r]& \coker(\s^*) \ar[r]& 0.}
\eeq
Using the Jouanolou trick described in Section \ref{Jtrick} we may, on replacing $X$ by an affine bundle over it (and pulling everything back to it), assume that $B^*$ is a projective $\cO$-module. Then we may pick a lift $\psi\colon B^*\to E_0^*$.

Over $\Gr\;(r,B)$, with its universal quotient bundle $\pi\colon B\twoheadrightarrow\cQ_B$ (suppressing some pullback maps as usual) we consider the composition
\beq{tor}
\xymatrix@=30pt{
E_0 \ar@/^{-2ex}/[rr]_{\tau}\ar[r]^-{(\sigma,\,\psi^*)}& E_1\oplus B \ar[r]^-{(\id,\,\pi)}& E_1\oplus\cQ_B.}
\eeq
At a closed point $(x,V)\in\Gr\;(r,B)$ (i.e. a point $x\in X$ and an $r$-dimensional subspace $V\le B_x$) we have
$$
\ker\tau_x\=\ker\s_x\cap\ker\;(\pi_x\circ \psi_x^*)\=\ker\s_x\cap V,
$$
where in the last expression we have used $\psi_x^*$ to identify $\ker\s_x$ as a subspace of $B_x$ by \eqref{inx}. In particular $\ker\tau_x$ is at most $r$-dimensional --- so $D_r(\tau)$ is a \emph{deepest} degeneracy locus --- and the locus of points where it is precisely $r$-dimensional is
\beq{taus}
D_r(\tau)\=\big\{(x,U)\in\Gr\;(r,B)\ \colon\ U\subseteq\ker\;(\s_x)\big\}\=\wt D_r(\s).
\eeq
This bijection of sets generalises to $S$-points of $X$: given $f\colon S\to X$ we get a bijection between the sets
$$
\hspace{5mm}\big\{\mathrm{rank\ }r\mathrm{\ subbundles\ }U\subset f^*E_0\ \colon\ U\!\Into\!f^*E_0\rt{\!f^*\s\!}f^*E_1\mathrm{\ is\ zero}\big\}
\vspace{-3mm}$$
\begin{multline*}
\hspace{-3mm}\mathrm{and}\quad\big\{\mathrm{rank\ }r\mathrm{\ subbundles\ }(U\subset f^*E_0,\ V\subset f^*B)\ \colon\text{ the composition} \\
U\!\Into\!f^*E_0\rt{\!f^*\tau\!}f^*E_1\oplus f^*B/V\mathrm{\ is\ zero}\big\}.
\end{multline*}
The functors taking $f\colon S\to X$ to either of these two sets are therefore isomorphic. The first is \eqref{funct} and is represented by $\wt D_r(\s)$ by Proposition \ref{pcut}. The second is represented by $D_r(\tau)\subset\Gr\;(r,B)/X$. Therefore \eqref{taus} is an isomorphism of schemes.

Furthermore we can compare the perfect obstruction theory of Proposition \ref{vcb} for $\wt D_r(\s)$ to the deepest degeneracy locus perfect obstruction theory of \cite[Theorem 3.6]{GT1} for $D_r(\tau)$. In both cases the K-theory class of the virtual cotangent bundle is the restriction of
$$
\Omega_X+\cU\otimes(E_\bu^\vee-\cU^*),
$$
so their virtual cycles agree. Finally, the Thom-Porteous formula for $\wt D_r(\s)$ of Theorem \ref{bar} and the Thom-Porteous formula for $D_r(\tau)$ of \cite[Theorem 3.6]{GT1} both give
$$
\Delta^{r}_{b+e_1-e_0}\big(c\big(\cQ_B-E_\bu\big)\big)
$$
as the pushforward of the virtual cycle to $\Gr\;(r,B)$.

\begin{thm} \label{deep}
Suppose \eqref{Esurj2} holds. Then after pulling back to an affine bundle, the embedding $\iota\_{B}\colon\wt D_r(\s) \into \Gr\;(r,B)$ of \eqref{iot2} is the deepest degeneracy locus of the map of vector bundles \eqref{tor},
$$
\wt D_r(\s)\ \cong\ D_r(\tau).
$$
The two resulting virtual cycles and Thom-Porteous formulae agree.\hfill \qed
\end{thm}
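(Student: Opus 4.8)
The plan is to exhibit $D_r(\tau)$ and $\wt D_r(\sigma)$ as representing the same functor on $X$-schemes, identify their universal subbundles, and then match the two obstruction theories $K$-theoretically so that Siebert's formula forces the virtual cycles, and hence the Thom--Porteous formulae, to agree. First I would invoke the Jouanolou trick of Section~\ref{Jtrick}: after replacing $X$ by an affine bundle over it (and pulling everything back), $B^*$ becomes a projective $\cO$-module, so the surjection \eqref{Esurj2} admits a lift $\psi\colon B^*\to E_0^*$ as in \eqref{Lift}. This lift is exactly the ingredient needed to form the composition $\tau\colon E_0\to E_1\oplus\mathcal Q_B$ of \eqref{tor} over $\Gr(r,B)$; and since an affine bundle induces an isomorphism on Chow groups \eqref{J=}, every cycle-level statement proved upstairs descends to $X$.

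Next I would establish the scheme isomorphism. The pointwise identity $\ker\tau_{(x,V)}=\ker\sigma_x\cap V$ --- where $\psi_x^*$ is used to view $\ker\sigma_x=h^0(E_\bullet|_x)$ inside $B_x$ as in \eqref{inx} --- shows $\dim\ker\tau\le r$ everywhere, so $D_{r+1}(\tau)=\varnothing$ and $D_r(\tau)\subset\Gr(r,B)$ is a \emph{deepest} degeneracy locus, equal as a set to $\wt D_r(\sigma)$ by \eqref{taus}. To upgrade this to an isomorphism of schemes I would compare functors of points: for $f\colon S\to X$, giving a rank-$r$ subbundle $U\subset f^*E_0$ killed by $f^*\sigma$ is the same as giving a pair of rank-$r$ subbundles $(U\subset f^*E_0,\ V\subset f^*B)$ for which the composite $U\hookrightarrow f^*E_0\xrightarrow{f^*\tau}f^*E_1\oplus f^*B/V$ vanishes --- here $V$ is recovered from $U$ via $\psi^*$, and the two vanishing conditions are equivalent after any basechange because all the sheaves involved are subbundles (the same device as in the proof of Proposition~\ref{pcut}). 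The first functor is \eqref{funct}, represented by $\wt D_r(\sigma)$; the second is represented by $D_r(\tau)$ via the deepest-degeneracy-locus construction of \cite[Theorem~3.6]{GT1}. Hence $\wt D_r(\sigma)\cong D_r(\tau)$ as $\Gr(r,B)$-schemes, and under this isomorphism the universal subbundle $\mathcal U$ of $\wt D_r(\sigma)$ is identified with $\mathcal U_B$ restricted to $D_r(\tau)$.

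Finally I would match the deformation-obstruction data. The obstruction theory of Proposition~\ref{vcb} for $\wt D_r(\sigma)$ and the deepest-degeneracy-locus obstruction theory of \cite[Theorem~3.6]{GT1} for $D_r(\tau)$ both have virtual cotangent bundle whose $K$-theory class is the restriction of $\Omega_X+\mathcal U\otimes(E_\bullet^\vee-\mathcal U^*)$, using the identification of $\mathcal U$ above. Since Siebert's formula \cite[Theorem~4.6]{S} depends on a perfect obstruction theory only through the $K$-theory class of its virtual tangent bundle, the two virtual cycles coincide. And Theorem~\ref{bar} evaluates $\iota_{B*}[\wt D_r(\sigma)]^{\vir}$ as $\Delta^r_{b+e_1-e_0}\big(c(\mathcal Q_B-E_\bullet)\big)$, which is exactly the Thom--Porteous formula that \cite[Theorem~3.6]{GT1} gives for $D_r(\tau)\subset\Gr(r,B)$, so the push-forwards agree as well; everything then descends along \eqref{J=}.

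The step I expect to be the main obstacle is the scheme-theoretic --- as opposed to merely set-theoretic --- identification in the second paragraph: one must verify that the zero-locus scheme structure carried by $\wt D_r(\sigma)$ and the Fitting/deepest-degeneracy scheme structure carried by $D_r(\tau)$ really agree, which is why the comparison is phrased functorially and why exactness ``after any basechange'' of the relevant complexes is doing the real work. Keeping track of the Jouanolou pullbacks is a secondary annoyance, but since no information about the virtual cycle is lost on passing to an affine bundle, it is a matter of notation rather than of substance.
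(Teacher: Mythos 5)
Your proposal follows essentially the same route as the paper's own argument: use the Jouanolou trick to lift \eqref{Esurj2} to $\psi\colon B^*\to E_0^*$ and form $\tau$; observe $\ker\tau_{(x,V)}=\ker\sigma_x\cap V$ so $D_r(\tau)$ is deepest; upgrade the set bijection to a functor isomorphism on $X$-schemes (with $V$ recovered from $U$ via $\psi^*$), hence a scheme isomorphism $\wt D_r(\sigma)\cong D_r(\tau)$ identifying the universal subbundles; and finally match the two perfect obstruction theories via the common K-theory class $\Omega_X+\cU\otimes(E_\bu^\vee-\cU^*)$, invoking Siebert's theorem, and match the Thom--Porteous formulae via Theorem~\ref{bar} versus \cite[Theorem~3.6]{GT1}. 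This is the paper's proof, organized the same way.
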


\subsection*{Generalised Carlsson-Okounkov vanishing}
\begin{prop}
Under the assumptions of Theorem \ref{deep},
$$
c_{\;b+e_1-e_0+i}\big(B+E_1-E_0\big)\=0\quad\forall\,i>0.
$$
\end{prop}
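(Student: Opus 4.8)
The idea is simply that, under \eqref{Esurj2}, the K-theory class $B+E_1-E_0$ is represented by an \emph{honest} vector bundle of rank $b+e_1-e_0$, so all of its Chern classes in higher degree vanish. This bundle is the one already manufactured in the proofs of Theorems \ref{bar} and \ref{deep}, so there is almost nothing new to do.

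Concretely, the plan is as follows. By the Jouanolou trick of Section \ref{Jtrick} (already built into the hypotheses of Theorem \ref{deep}) we may work on the affine model of $X$, where the surjection \eqref{Esurj2} lifts to a map $\psi\colon B^*\to E_0^*$ as in \eqref{Lift}. I would then check that the induced bundle map
$$E_0\ \xrightarrow{\,(\sigma,\,\psi^*)\,}\ E_1\oplus B$$
is fibrewise injective, hence a subbundle inclusion: if $v\in\ker\sigma_x\cap\ker\psi^*_x$ then $v$ annihilates $\im(\sigma^*_x)$ (since $\sigma_xv=0$) and annihilates $\psi_x(B^*_x)$ (since $\psi^*_xv=0$), while $\im(\sigma^*_x)+\psi_x(B^*_x)=(E_0^*)_x$ because $B^*\twoheadrightarrow\coker(\sigma^*)$ is onto on fibres; therefore $v=0$. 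Its cokernel $C$ is then a vector bundle of rank $b+e_1-e_0$ with $[C]=[B]+[E_1]-[E_0]$ in $K^0$, exactly the bundle appearing in the proof of Theorem \ref{bar}.

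It then follows that $c(B+E_1-E_0)=c(C)$, and since $\rk C=b+e_1-e_0$ we get $c_j(C)=0$ for $j>b+e_1-e_0$, i.e. $c_{b+e_1-e_0+i}(B+E_1-E_0)=0$ for every $i>0$. Finally I would transport this back to $X$: the affine bundle projection is a flat homotopy equivalence inducing the isomorphism \eqref{J=} on Chow groups, and Chern classes commute with pullback, so the vanishing on the affine model forces the same vanishing on $X$.

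I do not anticipate a genuine obstacle here; the content is entirely the (already established) fact that $B-E_\bu$ is an honest bundle of the expected rank. The only minor points needing care are that the fibrewise-injectivity check for $(\sigma,\psi^*)$ really uses surjectivity of \eqref{Esurj2} on \emph{fibres}, not just generically, and the usual bookkeeping to carry the Chern-class identity back through the Jouanolou trick.
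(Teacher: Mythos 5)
Your proof is correct and essentially identical to the paper's: both recognize that under \eqref{Esurj2}, after the Jouanolou trick, the bundle map $(\sigma,\psi^*)\colon E_0\to E_1\oplus B$ of \eqref{tor} is a subbundle inclusion whose cokernel is a rank $b+e_1-e_0$ vector bundle representing $B+E_1-E_0$ in K-theory, whence the vanishing, transported back to $X$ via \eqref{J=}. Your explicit fibrewise-injectivity check is a welcome expansion of a step the paper asserts without detail (deferring to the earlier proof of Theorem \ref{bar}).
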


\begin{proof}
On an affine bundle over $X$ recall the map \eqref{tor}
\beq{CObdl}
E_0\rt{(\s,\psi^*)}E_1\oplus B
\eeq
of the last Section. By \eqref{Esurj2} it is an injective map of bundles and so is quasi-isomorphic to a rank $b-e_0+e_1$ vector bundle (its cokernel). Therefore its higher Chern classes vanish on the affine bundle, and so on $X$ by \eqref{J=}.
\end{proof}

Taking $B=\cO_X$ and $E_\bu=R\hom_\pi(\cI_1,\cI_2)$ (in the notation of Section \ref{nHs}) over the product of  Hilbert schemes of points $X=S^{[n_1]}\times S^{[n_2]}$ this recovers the original Carlsson-Okounkov vanishing of \cite{CO} and \cite[Corollary 8.1]{GT1}, at least when $H^{\ge1}(\cO_S)=0$ to ensure that $E_\bu$ is 2-term. The general surface $S$ can be handled by another splitting trick to remove $H^{\ge1}(\cO_S)$ from $E_\bu$; see \cite[Section 8]{GT1}.

\section{Comparison}

Given a map of 2-term complexes $F_\bu\to E_\bu$ we can try to compare the virtual cycles of Section \ref{2}. We begin with maps of a specific shape.

\begin{prop} \label{compare}
Suppose given a commutative diagram of vector bundles
\beq{map2term}\xymatrix@R=20pt{
\ F_0\,\ar@{^{(}->}<0pt,-11pt>;[d]<-2pt>^(.35){u_0}\ar[r]^\tau& \ F_1\ \ar@{->>}[d]^{u_1} \\ 
\ E_0\ \,\ar<11pt,-36pt>;[r]^-{\sigma} & \ E_1\ }
\eeq
over a smooth quasi-projective variety $X$, with $u_0$ a vector bundle injection and $u_1$ a surjection. Then there is an embedding
$$
\iota\,\colon\ \wt D_r(F_\bu)\ \Into\ \wt D_r(E_\bu)
$$
such that the tautological bundle $\cU$ on $\wt D_r(E_\bu)$ pulls back to that on $\wt D_r(F_\bu)$ and
$$
\iota_*\big [\wt D_r(F_\bu)\big]^{\vir}\=c_{\;r.\rk(E_\bu-F_\bu)}\big(\cU^*\otimes(E_\bu-F_\bu)\big)\cap \big[\wt D_r(E_\bu)\big]^{\vir}.
$$
\end{prop}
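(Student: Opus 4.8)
The plan is to realize both $\wt D_r(F_\bu)$ and $\wt D_r(E_\bu)$ inside a common Grassmann bundle and then apply Lemma \ref{s*s*} and the excess intersection machinery used in the proof of Theorem \ref{bar}. First I would construct the embedding $\iota$. Using Proposition \ref{pcut}, $\wt D_r(F_\bu)$ represents the functor of rank $r$ subbundles $U\subseteq f^*F_0$ that are annihilated by $f^*\tau$. Given such a $U$, the injection $u_0\colon F_0\into E_0$ sends it to a rank $r$ subbundle $U'=u_0(U)\subseteq f^*E_0$, and the commutativity of \eqref{map2term} together with $\tau|_U=0$ forces $\sigma\circ u_0|_U = u_1\circ\tau|_U=0$, so $U'$ lies in $\ker(f^*\sigma)$ — i.e. it is a point of $\wt D_r(E_\bu)$. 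This is a natural transformation of functors, hence gives a morphism $\iota$; it is a closed embedding because $u_0$ is a bundle injection (so $U\mapsto u_0(U)$ is injective on subbundles and the condition ``$U'\subseteq F_0$'' is closed). By construction $\cU_{E_\bu}$ pulls back to $\cU_{F_\bu}$. For the class formula, the cleanest route mirrors Theorem \ref{deep}/\ref{bar}: realize $\wt D_r(E_\bu)\subset\Gr(r,E_0)$ as the zero locus of $\wt\sigma\in\Gamma(\cU^*\otimes E_1)$, and realize $\wt D_r(F_\bu)$ — via the injection $F_0\into E_0$ — as a degeneracy locus inside $\Gr(r,E_0)$ as well, not just inside $\Gr(r,F_0)$.

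More precisely, I would argue that under $\iota$ the scheme $\wt D_r(F_\bu)$ is cut out inside $\wt D_r(E_\bu)$ by a section of $\cU^*\otimes(E_\bu - F_\bu)$ in the appropriate derived sense, and that this section is regular of the expected codimension $r\cdot\rk(E_\bu - F_\bu)$. The key point: a subbundle $U\subseteq p^*E_0$ classified by a point of $\wt D_r(E_\bu)$ lies in the image of $p^*F_0$ precisely when the composition $\cU\into p^*E_0\twoheadrightarrow p^*(E_0/F_0)$ vanishes; and, given that it does, the fact that $\tau$ annihilates $U$ is automatic from $\sigma$ annihilating $U$ only up to the discrepancy measured by $F_1$ versus $E_1$. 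Dualizing \eqref{map2term} and using that $u_0$ is injective and $u_1$ surjective, the cone of $F_\bu\to E_\bu$ is a genuine two-term complex of bundles $[E_0/F_0 \to \ker u_1]$ concentrated so that in $K$-theory $\Cone(F_\bu\to E_\bu) = E_\bu - F_\bu$ is represented by honest bundles. The section of $\cU^*\otimes(E_\bu-F_\bu)$ over $\wt D_r(E_\bu)$ whose vanishing carves out $\iota(\wt D_r(F_\bu))$ is then built from $\cU\into p^*E_0\to p^*(E_0/F_0)$ together with the induced map involving $\ker u_1$; its zero locus is exactly the locus where $U$ descends to $F_0$ and $F_1$-obstructions vanish, which by the functorial description above is $\wt D_r(F_\bu)$.

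Granting that $\iota(\wt D_r(F_\bu))$ is the zero scheme of a regular section of $\cU^*\otimes(E_\bu-F_\bu)$ on $\wt D_r(E_\bu)$ — of the right codimension $r\cdot\rk(E_\bu-F_\bu)$ — I would invoke compatibility of virtual cycles under such a ``cutting down'': the perfect obstruction theory of $\wt D_r(F_\bu)$ (Proposition \ref{vcb}, whose virtual cotangent bundle is $\Omega_X + \cU\otimes(F_\bu^\vee - \cU^*)$ in $K$-theory) differs from the restriction of that of $\wt D_r(E_\bu)$ (namely $\Omega_X + \cU\otimes(E_\bu^\vee-\cU^*)$) exactly by $\cU\otimes(F_\bu^\vee - E_\bu^\vee)$, i.e. by the conormal bundle $(\cU^*\otimes(E_\bu-F_\bu))^\vee$ of the regular embedding $\iota$. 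Hence by the standard functoriality of virtual classes under regular embeddings refining a compatible obstruction theory (the same Gysin/excess argument as in the proof of Theorem \ref{bar}, using \cite[Section 14.1]{Fu} on localized top Chern classes), $\iota_*[\wt D_r(F_\bu)]^{\vir} = e(\cU^*\otimes(E_\bu-F_\bu))\cap[\wt D_r(E_\bu)]^{\vir} = c_{r\cdot\rk(E_\bu-F_\bu)}(\cU^*\otimes(E_\bu-F_\bu))\cap[\wt D_r(E_\bu)]^{\vir}$, as claimed.

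The main obstacle I anticipate is the middle step: rigorously identifying $\wt D_r(F_\bu)$, as a \emph{scheme}, with the zero locus of a specific regular section of $\cU^*\otimes(E_\bu - F_\bu)$ over $\wt D_r(E_\bu)$, and checking this section is regular (codimension $=r\cdot\rk(E_\bu-F_\bu)$) without transversality hypotheses on $\sigma,\tau$. This requires carefully choosing a representative of the two-term complex $\Cone(F_\bu\to E_\bu)[-1]\simeq[E_0/F_0\to\ker u_1]$ (using that $u_0$ is a bundle injection and $u_1$ a surjection, so both $E_0/F_0$ and $\ker u_1$ are bundles), and matching the derived zero locus with the functorial description of $\wt D_r(F_\bu)$ from Proposition \ref{pcut}; the Jouanolou trick of Section \ref{Jtrick} may be needed to split an intermediate extension. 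Once the scheme-theoretic and obstruction-theoretic identifications are in place, the Chern class computation is a formal consequence exactly as in Theorems \ref{bar} and \ref{deep}.
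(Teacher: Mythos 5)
Your overall strategy — relate the two virtual classes by a Chern class of a bundle representing $\cU^*\otimes(E_\bu-F_\bu)$, using that $E_\bu-F_\bu$ is represented by the honest bundle $E_0/F_0\oplus\ker u_1$ because $u_0$ is a bundle injection and $u_1$ a surjection — is the right idea, and your construction of the embedding $\iota$ via the functor of Proposition \ref{pcut} matches the paper's. But the load-bearing step as you describe it has a real gap, and you flag it yourself without resolving it: you want to exhibit $\iota(\wt D_r(F_\bu))$ as the zero scheme of a \emph{regular} section of $\cU^*\otimes(E_\bu-F_\bu)$ on $\wt D_r(E_\bu)$. There are two problems. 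First, no such single section exists on $\wt D_r(E_\bu)$: the $E_0/F_0$ part of the section (coming from $\cU\into p^*E_0\to p^*(E_0/F_0)$) makes sense there, but the $\ker u_1$ part does not — $\wt\tau$ is a section of $\cU^*\otimes F_1$ defined only over $\Gr(r,F_0)$, and it only lands in $\cU^*\otimes\ker u_1$ after you have already imposed the vanishing of $\wt\sigma$. Second, even if you produce some section, $\wt D_r(E_\bu)$ is typically singular and the section will not be regular, so you cannot invoke excess intersection directly; and the appeal to ``standard functoriality of virtual classes under regular embeddings refining a compatible obstruction theory'' is not a theorem one can quote in this generality — it requires a compatible triple in the sense of Behrend–Fantechi, which you would still have to construct.

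The paper avoids both issues by never leaving the smooth ambient Grassmann bundles. Both virtual classes are \emph{localized top Chern classes} on $\Gr(r,E_0)$ and $\Gr(r,F_0)$, and the comparison is done in two stages through the intermediate scheme $u^{-1}(\wt D_r(E_\bu))$, where $u\colon\Gr(r,F_0)\into\Gr(r,E_0)$: (i) the short exact sequence $0\to\cU^*\otimes K_1\to\cU^*\otimes F_1\to\cU^*\otimes E_1\to0$ on $\Gr(r,F_0)$, with $\wt\tau\mapsto u^*\wt\sigma$, is handled by the dedicated Lemma \ref{fulton}, which is precisely the statement that localized top Chern classes behave as you want under such a quotient of bundles and sections; this produces the $c_{r(f_1-e_1)}(\cU^*\otimes K_1)$ factor. (ii) The pushforward along $i\colon u^{-1}(\wt D_r(E_\bu))\into\wt D_r(E_\bu)$ is computed by the excess intersection formula (\cite[Example 6.3.4]{Fu}) for the fiber square over the regular embedding $u$ of smooth ambients, producing the $c_{r(e_0-f_0)}(\cU^*\otimes E_0/F_0)$ factor. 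The product of the two factors is the class you wrote. So the paper's proof is essentially what you would get if you gave up on producing a single regular section over $\wt D_r(E_\bu)$, split $E_\bu-F_\bu$ into its $E_0/F_0$ and $\ker u_1$ pieces, and handled each with the appropriate piece of Fulton's machinery (excess intersection for the first, Lemma \ref{fulton} for the second), working always on smooth ambient Grassmann bundles where regularity and Gysin maps are available.
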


\begin{proof}
Recall that $\wt D_r(E_\bu)$ (and its perfect obstruction theory) is cut out of $\Gr\;(r,E_0)$ by the section $\wt\s$ of $\cU^*\otimes E_1$ \eqref{0loc}. Therefore its virtual cycle is given by the localised top Chern class of $\cU^*\otimes E_1$ \cite[Section 14.1]{Fu},
$$
[\wt D_r(E_\bu)]^{\vir}\=\wt\s\;^!\;[0_{\cU^*\otimes E_1}]\ \in\ A_{\;\dim X+r(e_0-e_1+r)}(\wt D_r(E_\bu)),
$$
where $0_{\cU^*\otimes E_1}\cong\Gr\;(r,E_0)$ is the zero section of $\cU^*\otimes E_1$ and $e_i:=\rk(E_i)$.

Now basechange by the embedding $u\colon\Gr\;(r,F_0)\into\Gr\;(r,E_0)$ induced by the vector bundle injection $u_0\colon F_0\into E_0$. This is a regular embedding cut out by the canonical section of $\cU^*\!\otimes(E_0/F_0)$ on $\Gr\;(r,E_0)$.\footnote{This is a special case of Proposition \ref{pcut} applied to the 2-term complex of bundles $\ker\;(u_0^*)\to E_0^*$ in place of $E_\bu\;$.} Therefore \cite[Proposition 14.1(d)(ii)]{Fu} gives
\beq{que}
u^![\wt D_r(E_\bu)]^{\vir}\=(u^*\tilde\s)^!\big[0_{u^*(\cU^*\otimes E_1)}\big].
\eeq

Set $K_1:=\ker\;(u_1)$. Suppressing pull back maps, on $\Gr\;(r,F_0)/X$ we have the exact sequence
$$
0\To\cU^*\otimes K_1\To\cU^*\otimes F_1\rt{\id\otimes u_1}\cU^*\otimes E_1\To0
$$
in which the section $\wt\tau$ of the second bundle projects to $u^*\wt\s$ in the third. Therefore \eqref{que} and Lemma \ref{fulton} below give the identity
$$
c_{r(f_1-e_1)}(\cU^*\otimes K_1)\cap u^![\wt D_r(E_\bu)]^{\vir}\=j_*\wt\tau^!\big[0_{\cU^*\otimes F_1}\big]\=j_*\big[\wt D_r(F_\bu)\big]^{\vir},
$$
where $j\colon\wt D_r(F_\bu)\into u^{-1}\big(\wt D_r(E_\bu)\big)$ and $f_i:=\rk(F_i)$. Pushing forward by $i\colon u^{-1}\big(\wt D_r(E_\bu)\big)\into\wt D_r(E_\bu)$ and using $\iota=i\circ j$ gives
$$
\iota_*\big[\wt D_r(F_\bu)\big]^{\vir}\=i_*u^!\big(c_{r(f_1-e_1)}(\cU^*\otimes K_1)\cap[\wt D_r(E_\bu)]^{\vir}\big).
$$
We then apply \cite[Example 6.3.4]{Fu} to the fibre square
$$
\xymatrix@R=18pt{\ u^{-1}\big(\wt D_r(E_\bu)\big)\ \ar@{^(->}[r]^-i\ar@{^(->}<0pt,-11pt>;[d]<-2pt>&
\wt D_r(E_\bu)\ar@{^(->}<87pt,-11pt>;[d]<-2pt> \\
\Gr\;(r,F_0)\ \ar@{^(->}[r]^u& \Gr\;(r,E_0)}
$$ 
in which the bottom row is the regular embedding cut out by the canonical section of $\cU^*\!\otimes(E_0/F_0)$. The result is
\beqa
\iota_*\big[\wt D_r(F_\bu)\big]^{\vir} &\!=\!& c_{r(e_0-f_0)}\big(\cU^*\otimes E_0/F_0\big)\cap c_{r(f_1-e_1)}(\cU^*\otimes K_1)\cap[\wt D_r(E_\bu)]^{\vir} \\
&\!=\!& c_{r(e_0-f_0)+r(f_1-e_1)}\big(\cU^*\otimes(E_0/F_0\oplus K_1)\big)\cap[\wt D_r(E_\bu)]^{\vir}.
\eeqa
Substituting $K_1=F_1-E_1$ in K-theory gives the result.
\end{proof}

\begin{lem}\label{fulton}
Let $0\to A\to B\stackrel{\pi\,}\to C\to0$ be an exact sequence of vector bundles over a quasi-projective variety $X$. Given a section $s$ of $B$ we let $t:=\pi(s)$ be the induced section of $C$. Then the resulting localised top Chern classes of $B$ and $C$ are related by the identity
$$
c_a(A)\,\cap\,t^{\;!}\;[0_C]\=\iota_*\;s^!\;[0_B],
$$
where $\iota\colon Z(s)\into Z(t)$ is the embedding of zero schemes and $a:=\rk(A)$.
\end{lem}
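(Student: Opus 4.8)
The plan is to reduce to a \emph{split} exact sequence and then compute by factoring the section $s$. By the Jouanolou trick of Section~\ref{Jtrick} we may replace $X$ by an affine bundle over it: the zero schemes, the inclusion $\iota$, the surjection $\pi$, the subbundle $A$ and both localized top Chern classes all pull back, and that pullback is an isomorphism on Chow groups, so it is enough to treat the case $X$ affine. Then $\Ext^1_{\cO_X}(C,A)=H^1\big(X,C^\vee\otimes A\big)=0$, so $B\cong A\oplus C$ and, under this splitting, $s=(\alpha,t)$ for some $\alpha\in\Gamma(A)$ with $t=\pi(s)$. Hence $Z(s)=Z(\alpha)\cap Z(t)$, the map $\iota$ is the evident closed immersion into $Z(t)$, and $\alpha|_{Z(t)}\in\Gamma\big(A|_{Z(t)}\big)$ has zero scheme $Z(s)$.

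Next I would factor the regular embedding $s\colon X\into B$. Put $B_t:=\pi^{-1}\big(t(X)\big)\subset B$. The inclusion $j\colon B_t\into B$ is the flat base change of the regular embedding $t\colon X\into C$ along $\pi$, hence regular, with normal bundle the pullback of $C$ along $B_t\to X$; in the split case $B_t\to X$ is canonically the bundle $A$, and under this identification the induced map $X\to B_t$ is $\alpha$, so $s=j\circ\alpha$. Functoriality of refined Gysin maps \cite[Chapter~6]{Fu} gives $s^![0_B]=\alpha^!\big(j^![0_B]\big)$. Unwinding $j^!$ via the normal cone and using that $N_{B_t/B}$ is pulled back from $C$ identifies $j^![0_B]$ with the localized top Chern class $t^![0_C]$ --- both are obtained by capping $c\big(C|_{Z(t)}\big)$ with the Segre class of $Z(t)$ in $X$ --- with the only difference that $j^![0_B]$ is realised as a class supported on the copy $0_A\big(Z(t)\big)$ of $Z(t)$ inside the zero section of $A$. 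Therefore $s^![0_B]=\alpha^!\big(t^![0_C]\big)$, with $t^![0_C]$ regarded as a class on $0_A\big(Z(t)\big)\subset A$.

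Finally I would push forward along $\iota$. Because $t^![0_C]$ is supported on the zero section of $A|_{Z(t)}$, the class $\alpha^!\big(t^![0_C]\big)$ is, by definition, the localized top Chern class of $A|_{Z(t)}$ for the section $\alpha|_{Z(t)}$ capped with $t^![0_C]\in A_*\big(Z(t)\big)$; by the basic property of localized top Chern classes \cite[Section~14.1]{Fu}, its image under $A_*\big(Z(s)\big)\to A_*\big(Z(t)\big)$ equals $c_a\big(A|_{Z(t)}\big)\cap t^![0_C]=c_a(A)\cap t^![0_C]$, which is the asserted identity. The step I expect to be the main obstacle is the identification $j^![0_B]=t^![0_C]$ in the middle paragraph: it requires keeping careful track of which subscheme supports each class and which normal bundle enters, though this is a purely mechanical unwinding of the definitions in \cite[Chapters~6 and~14]{Fu}. (One could avoid the splitting step by treating $B_t\to X$ directly as an affine bundle --- a torsor under $A$ --- and showing that the refined Gysin of one of its sections, applied to a class supported on another section, introduces $c_a(A)$; reducing to the split case is merely a convenient way to make that last point routine.)
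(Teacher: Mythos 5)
Your proof is correct, but it takes a genuinely different route from the paper's. The paper works directly on the total spaces of the bundles with no reduction: it fits $A,B,C$ into a Cartesian square with the flat projection $\pi\colon B\to C$, writes $t^{\;!}[0_C]=[0_C]\cdot\pi_*[s(X)]$, applies the projection formula to move everything onto $B$, and observes that capping with $c_a(A)$ converts $j_*[A]$ into $j_*[0_A]=[0_B]$, at which point the remaining intersection is literally $s^![0_B]$. You instead reduce (via Jouanolou) to the split case, factor the section $s=j\circ\alpha$ through the intermediate subscheme $B_t=\pi^{-1}(t(X))\cong A$, invoke functoriality of refined Gysin maps to get $s^![0_B]=\alpha^!(j^![0_B])$, identify $j^![0_B]=t^![0_C]$ (either by the explicit formula $\{c(C|_{Z(t)})\cap s(Z(t),X)\}_{\dim X-\rk C}$ for both, as you do, or more slickly by compatibility of refined Gysin with flat base change, since $j$ is the base change of $t$ along $\pi$ and $\pi\circ 0_B=0_C$), and finish with the pushforward property of localized top Chern classes applied to the section $\alpha|_{Z(t)}$ of $A|_{Z(t)}$. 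Your approach is conceptually clean in that the $c_a(A)$ factor visibly arises from the section of $A$, but it pays with the extra reduction step and the need to identify $B_t$; the paper's argument is shorter and works in full generality at once. Both are sound. One small point worth making explicit in your argument: when you assert "by definition" that $\alpha^!(t^![0_C])$ is the localized top Chern class of $A|_{Z(t)}$ applied to $t^![0_C]$, you should note that $Z(t)\to B_t$ in the relevant Cartesian square is the zero section of $A|_{Z(t)}$ (as you computed), so that the refined Gysin of $\alpha$ really is the one whose pushforward is cap with $c_a(A)$ --- that is the step you flagged as requiring bookkeeping, and it does go through.
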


\begin{proof}
The total spaces of the bundles fit in the fibre square
$$
\xymatrix@C=30pt{\ A\ \ar@{^(->}[r]^j\ar[d]_\pi& B \ar[dl]+<13pt,10pt><.4ex>\ar[d]^\pi \\
\ X\ \ar@{^(->}[r]<.35ex>^(.6)t\ar@{^(->}[ur]<.35ex>^s& C,\! \ar[l]<.35ex>}
$$
with both maps $\pi$ being \emph{flat}. Letting $0_C$ denote the zero section of $C$, the top Chern class of $C$ localised by $t$ is
$$
t^{\;!}\;[0_C]\=[0_C]\cdot[t(X)]\=[0_C]\cdot\pi_*[s(X)],
$$
where $\,\cdot\,$ denotes the refined intersection product of \cite[Chapter 8]{Fu}, lying in the Chow group of the scheme-theoretic intersection $0_C\cap t(X)=Z(t)$.

So by the projection formula \cite[Example 8.1.7]{Fu},
$$
t^{\;!}\;[0_C]\=\pi_*\big(\pi^*[0_C]\cdot[s(X)]\big)
\=\pi_*\big(j_*[A]\cdot[s(X)]\big).
$$
Now cap with the top Chern class of $A$ to get its zero section:
\beqa
c_a(A)\cap t^{\;!}\;[0_C] &=&
\pi_*\big(j_*[0_A]\cdot[s(X)]\big) \\ \nonumber &=& \pi_*\big([0_B]\cdot[s(X)]\big) \\
&=& \pi_*(s^![0_B]).
\eeqa
But $\pi|_{0_B\cap s(X)}\colon 0_B\cap s(X)\to 0_C\cap t(X)$ is the inclusion $\iota\colon Z(s)\into Z(t)$.
\end{proof}

Proposition \ref{compare} now implies the following comparison results.

\begin{thm}\label{Gee} Let $X$ be a smooth quasi-projective variety. Suppose
\beq{extr}
F_\bu\To E_\bu\To G
\eeq
is an exact triangle in $D(\mathrm{Coh}(X))$ with $F_\bu,\,E_\bu$ being 2-term complexes of vector bundles supported in degrees $0, 1$. If $G$ is quasi-isomorphic to a rank $g$ vector bundle then there is an embedding $\iota\colon\wt D_r(F_\bu)\into\wt D_r(E_\bu)$ such that
\beqa
\iota_*\big[\wt D_r(F_\bu)\big]^{\vir} &=& c_{\;r.\rk\;(E_\bu-F_\bu)}\big(\cU^*\otimes(E_\bu-F_\bu)\big)\cap \big[\wt D_r(E_\bu)\big]^{\vir}\\ &=& c_{rg}(\cU^*\otimes G)\cap \big[\wt D_r(E_\bu)\big]^{\vir}.
\eeqa
Similarly if $G[1]$ is a rank $g$ vector bundle then $\wt D_r(E_\bu)\cong\wt D_r(F_\bu)$ and
\beqa
\big [\wt D_r(E_\bu)\big]^{\vir} &=&
c_{\;r.\rk(F_\bu-E_\bu)}\big(\cU^*\otimes(F_\bu-E_\bu)\big)\cap \big[\wt D_r(F_\bu)\big]^{\vir} \\
&=& c_{rg}(\cU^*\otimes G[1])\cap \big[\wt D_r(F_\bu)\big]^{\vir}.
\eeqa
\end{thm}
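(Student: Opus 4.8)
The plan is to reduce both cases to Proposition~\ref{compare} and Lemma~\ref{fulton} after passing to a convenient two-term model. By the Jouanolou trick of Section~\ref{Jtrick} I may assume $X$ is affine, so that every morphism in $D(\mathrm{Coh}(X))$ between bounded complexes of vector bundles is an honest chain map up to homotopy and every short exact sequence of such bundles splits; the resulting Chow-theoretic identities descend to the original $X$ via \eqref{J=}. Since $\wt D_r$, its tautological bundle $\cU$, its perfect obstruction theory and its virtual cycle depend only on the quasi-isomorphism class of the underlying complex (Propositions~\ref{pcut} and~\ref{vcb}), I may freely replace $F_\bu$ or $E_\bu$ by quasi-isomorphic two-term complexes of bundles compatible with the given maps.

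For the first case I would rotate the triangle to $F_\bu\cong\Cone(g)[-1]$ with $g\colon E_\bu\to G$. Since $G$ is a vector bundle in degree $0$, the chain map $g$ is just a bundle map $g_0\colon E_0\to G$ carrying no compatibility constraint, so $\Cone(g)[-1]$ is the two-term complex $\{E_0\to E_1\oplus G\}$ with differential $(\sigma,g_0)$, and the natural map to $E_\bu$ is $(\id_{E_0},\mathrm{pr}_{E_1})$. Replacing $F_\bu$ by this model puts us exactly in the hypotheses of Proposition~\ref{compare} ($u_0$ an isomorphism, $u_1$ the projection $E_1\oplus G\twoheadrightarrow E_1$), which then produces the embedding $\iota$, the statement about $\cU$, and the formula with $E_\bu-F_\bu$; since $[E_\bu]-[F_\bu]=[G]$ in $K$-theory this last Chern class equals $c_{rg}(\cU^*\otimes G)$, giving the second equality.

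For the second case $G$ is a vector bundle placed in degree $1$, say $G\cong G''[-1]$ with $\rk G''=g$. The connecting morphism of the triangle lies in $\Hom_{D(X)}(G,F_\bu[1])=\Ext^2_X(G'',F_\bu)$, which vanishes since $G''$ is locally free (hence projective, as $X$ is affine) and $F_\bu$ is supported in degrees $0,1$; a distinguished triangle with zero connecting map splits, so $E_\bu\cong F_\bu\oplus G''[-1]$. Taking the model $E_0=F_0$, $E_1=F_1\oplus G''$ with differential $(\tau,0)$ ($\tau$ the differential of $F_\bu$) gives $\coker\sigma^*=\coker\tau^*$, hence $\wt D_r(E_\bu)=\wt D_r(F_\bu)$ inside $\Gr(r,F_0)$ with the same $\cU$, while the section of $\cU^*\otimes E_1=(\cU^*\otimes F_1)\oplus(\cU^*\otimes G'')$ cutting it out in \eqref{0loc} is $(\wt\tau,0)$. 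Lemma~\ref{fulton}, applied to the split sequence $0\to\cU^*\otimes G''\to\cU^*\otimes E_1\to\cU^*\otimes F_1\to0$ with this section projecting to $\wt\tau$ — the two zero schemes coincide, so the map $\iota$ there is the identity — then gives $[\wt D_r(E_\bu)]^{\vir}=c_{rg}(\cU^*\otimes G'')\cap[\wt D_r(F_\bu)]^{\vir}$; and $[F_\bu]-[E_\bu]=-[G]=[G[1]]$ in $K$-theory yields both displayed forms.

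The only real difficulty is the derived-categorical bookkeeping: one has to check that the morphisms in the given triangle can be promoted to chain maps of precisely the shape that feeds Proposition~\ref{compare} (first case) or that forces the splitting (second case). The passage to an affine model through the Jouanolou trick is exactly what supplies both the chain-map representation and the $\Ext^2$-vanishing, neither of which is available on a general $X$; once the right two-term representatives are fixed, both statements follow immediately from results already established.
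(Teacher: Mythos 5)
Your proof is correct and follows essentially the same route as the paper. In the first case the paper trims a sufficiently negative resolution of $E_\bu$ to obtain a chain-map representative of $E_\bu\to G$, whereas you instead affinize via the Jouanolou trick; both produce the same two-term model $\{E_0\xrightarrow{(\sigma,g_0)}E_1\oplus G\}\to\{E_0\xrightarrow{\sigma}E_1\}$ and then invoke Proposition~\ref{compare}. In the second case the paper realizes the triangle as a short exact sequence of complexes $\{E_0\xrightarrow{\tau}F_1\hookrightarrow E_1\twoheadrightarrow H\}$ without splitting it and finishes with Fulton's excess intersection formula, whereas you affinize, split the triangle via the vanishing of $\Ext^2(G'',F_\bu)$, and then apply Lemma~\ref{fulton} to the split sequence $0\to\cU^*\otimes G''\to\cU^*\otimes E_1\to\cU^*\otimes F_1\to0$. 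These are equivalent ways of packaging the same computation (Lemma~\ref{fulton} being a cousin of the excess intersection formula), so the difference is stylistic: your version spends the Jouanolou trick to avoid quoting Fulton Theorem~6.3, at the mild cost of having to descend the resulting Chern-class identity along \eqref{J=}. All the intermediate verifications you supply (the shape of $\Cone(g)[-1]$, the vanishing of the connecting map, $\coker\sigma^*=\coker\tau^*$ under the split model, and the identification $Z((\wt\tau,0))=Z(\wt\tau)$ so that $\iota$ in Lemma~\ref{fulton} is the identity) are correct.
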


\begin{proof}
In the first case fix a representative vector bundle $G$ and a locally free resolution of $E_\bu$, sufficiently negative that the morphism $E_\bu\to G$ in $D(\mathrm{Coh}(X))$ is represented by a genuine map of complexes. Then trim $E_\bu$ by removing $E_{<0}$ and replacing $E_0$ by $E_0/\im(E_{-1})$. Similarly remove $E_{>1}$ and replace $E_1$ by $\ker\;(E_2\to E_3)$. The upshot is a quasi-isomorphic 2-term complex of bundles $\sigma\colon E_0\to E_1$ with a map $f\colon E_0\to G$.

Then since $F_\bu\cong\Cone\;(E_\bu\to G)[-1]$ we can realise the exact triangle \eqref{extr} by the following short exact sequence of vertical complexes
\beq{stan}
\xymatrix@R=0pt{
&&& E_0 \ar@{=}[r]\ar[dd]_(.42){\s\;\oplus\!}^(.42){\!f}& E_0 \ar[dd]^(.42)\s \\ G[-1]\To F_\bu\To E_\bu &= \\
&&G \ar[r]^-{(0,\id)}& E_1\oplus G \ar[r]^(.55){(\id,0)}& E_1}.
\eeq
Therefore the map of 2-term complexes $F_\bu\to E_\bu$ takes the form of \eqref{map2term} with $u_0=\id$ and $u_1=(\id,0)$, and Proposition \ref{compare} now gives the result. \medskip

When $G[1]$ is a vector bundle $H$ a similar argument represents the exact triangle as 
$$
\xymatrix@R=0pt{
&& E_0 \ar@{=}[r]\ar[dd]_{\tau}& E_0 \ar[dd]^\s \\ F_\bu\To E_\bu\To G &= \\
&&F_1 \INTO^-{u_1}& E_1 \ar@{->>}[r]& H.\!}
$$
So now $\wt D_r(F_\bu)$ and $\wt D_r(E_\bu)$ are both cut out of $\Gr\;(r,E_0)$ by sections $\wt\tau$ and $\wt\s=(\id\otimes u_1)\circ\wt\tau$  of $\cU^*\otimes F_1$ and $\cU^*\otimes E_1$ respectively, as in \eqref{0loc}.
Since the former is a subbundle of the latter we find $\wt D_r(F_\bu)\cong\wt D_r(E_\bu)$ and the claimed formula is the excess intersection formula\footnote{Equivalently, the obstruction sheaf of $\wt D_r(E_\bu)$ is an extension of $\cU^*\!\otimes G$ by the obstruction sheaf of $\wt D_r(F_\bu)$, from which the formula also follows.} of \cite[Theorem 6.3]{Fu} applied to the fibre diagram
$$\xymatrix@R=18pt{
\wt D_r \ar[d] \ar[r]& \Gr\;(r,E_0)\ar[d]^-{\wt  \tau}\\ 
\Gr\;(r,E_0) \ar[r]^-{0} \ar@{=}[d] & \cc U^*\otimes F_1\ar[d]_-{\id\!}^-{\!\otimes u_1 }\\
\Gr\;(r,E_0) \ar[r]^-{0} & \cc U^*\otimes E_1}
$$
with excess normal bundle $\cU^*\otimes E_1\big/\cU^*\otimes F_1\cong\cU^*\otimes H$.
\end{proof}

\section{Nested Hilbert schemes}\label{nHs}
A nested Hilbert scheme represents the functor which takes a base scheme $B$ to the set of families of ideals
\beq{jj}
J_1\,\subseteq\,J_2\subseteq\cdots\subseteq\,J_n\,\subseteq\,\cO_{S\times B}
\eeq
with each $\cO_{S\times B}/J_i$ flat over $B$ and of fixed topological class on each $S$ fibre. By \cite[Proof of Lemma 6.13]{Ko} the double duals $J_i^{**}\subseteq\cO_{S\times B}$ are locally free of the form $\cO(-D_i)\subseteq\cO_{S\times B}$ for some Cartier divisor $D_i\subset S\times B$ flat over $B$. Therefore $J_i\subseteq J_i^{**}$ can be written as $I_{Z_i}(-D_i)\subseteq\cO(-D_i)$ for some subscheme $Z_i\subset S\times B$, flat over $B$ of relative dimension 0. Writing $I_i:=I_{Z_i}$ then \eqref{jj} takes the form
\beq{hilbdef}
I_1(-D_1)\,\subseteq\,I_2(-D_2)\subseteq\cdots\subseteq\,I_n(-D_n)\,\subseteq\,\cO_{S\times B}.
\eeq
As we will see in Section \ref{dko} the Hilbert schemes $S_\b$ of pure divisors $\cO(-D)\subseteq\cO_S$ have been heavily studied for their relationship to Seiberg-Witten theory; see \cite{DKO} for instance. Taking the product $S_{\beta_i}\times S^{[n_i]}$ with a (smooth) Hilbert scheme of points gives the Hilbert scheme $S_{\b_i}^{[n_i]}$ of ideals $I_i(-D_i)\subseteq\cO_S$, where $n_i=$\,length\,$(Z_i)$. So to understand \emph{nested} Hilbert schemes what remains is to find a way to impose, one by one, the inclusions
\beq{ii}
I_i(-D_i)\ \subseteq\ I_{i+1}(-D_{i+1})
\eeq
on these products of Hilbert schemes. Taking double duals implies $E_i:=D_i-D_{i+1}$ is effective, so \eqref{ii} is equivalent to the two inclusions
$$
I_i(-E_i)\ \subseteq\ I_{i+1} \quad\mathrm{and}\quad \cO(-D_{i+1})\ \subseteq\ \cO_{S\times B}.
$$
It is sufficient to study the first, since the second is a special case of it. So we are reduced to studying the 2-step nested Hilbert scheme
$$
S_\beta^{[n_1,n_2]}, \qquad\beta\in H^2(S,\Z),\ n_1,n_2\ge0,
$$
which represents the functor mapping $B$ to the set of families of ideals
\beq{in}
I_1(-D)\ \subseteq\ I_2\ \subseteq\ \cO_{S\times B},
\eeq
with $D,\,I_1,\,I_2$ flat over $B$ and such that on closed fibres $S_b$ we have $[D_b]=\beta$ and colength\,$(I_i|_{S_b})=n_i$. The map to double duals recovers $Z_1,\,Z_2$ and $D$, giving a classifying map to a smooth space
\beq{2step}
S_\beta^{[n_1,n_2]}\To S^{[n_1]}\times S^{[n_2]}\times\Pic_\beta(S)\ =:\ X.
\eeq
Since \eqref{in} is the same data as a 1-dimensional subspace of $\Hom(I_1(-D),I_2)$ we see that, at the level of points, $S_\beta^{[n_1,n_2]}$ \eqref{2step} is the virtual resolution $\wt D_1$ of the $r=1$ degeneracy locus of the complex over $X$ which restricts to $R\Hom(I_1,I_2\otimes L)$ at $(I_1,I_2,L)\in X$. To make a scheme theoretic statement we use
$$
\pi\,\colon\ S\times S^{[n_1]}\times S^{[n_2]}\times\Pic_\beta(S)\To S^{[n_1]}\times S^{[n_2]}\times\Pic_\beta(S)
$$
with its universal ideal sheaves $\cI_1,\,\cI_2$ and a fixed choice of Poincar\'e line bundle\footnote{We always normalise $\cL_\b$ (by tensoring it by the pullback of $\cL_\b^{-1}|_{\{x\}\times\Pic_\b(S)}$ if necessary) so that $\cL_\b|_{\{x\}\times\Pic_\b(S)}$ is trivial on some fixed basepoint $x\in S$.} $\cL_\beta$. Using basechange, the Nakayama lemma and the fact that $\Ext^i(I_1,I_2\otimes L)=0$ for $i\not\in[0,2]$ on any $S$ fibre of $\pi$, we can trim
\beq{3term}
R\hom_\pi(\cI_1,\cI_2\otimes\cL_\beta)\ \cong\ \big\{E_0\rt\s E_1\To E_2\big\}\ =:\ E_\bu
\eeq
to be a 3-term complex of locally free sheaves. Using its stupid truncation $E_0\to E_1$ gives the following result, which will be modified later to give versions invariant under quasi-isomorphisms.

\begin{prop} \label{props}
The nested Hilbert scheme \eqref{2step} is the virtual resolution of the $r=1$ degeneracy locus of $\s\colon E_0\to E_1$  \eqref{3term}, giving isomorphisms
$$
S_\beta^{[n_1,n_2]}\ \cong\ \wt D_1(\s)\ \cong\ \PP^*\big(\ext^2_\pi(\cI_2,\cI_1\otimes K_S\otimes\cL_\beta^*)\big)
$$
under which the tautological bundle $\cO(-1)$ on $\wt D_1(\s)$ corresponds to the dual of the tautological quotient bundle on $\PP^*$.
\end{prop}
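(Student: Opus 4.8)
The plan is to establish the three isomorphisms of the statement in turn. The isomorphism $\S{n_1,n_2}_\b\cong\wt D_1(\s)$ over $X=\S{n_1}\times\S{n_2}\times\Pic_\b(S)$ is the substantive one, proved by a comparison of functors of points; given it, the identification $\wt D_1(\s)\cong\PP^*\big(\ext^2_\pi(\cI_2,\cI_1\otimes K_S\otimes\cL_\b^*)\big)$ and the statement about tautological bundles are bookkeeping from \eqref{Grdef}, the Remarks following Proposition \ref{pcut}, and relative Serre duality.

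For $\S{n_1,n_2}_\b\cong\wt D_1(\s)$: by Proposition \ref{pcut}, $\wt D_1(\s)/X$ represents the functor \eqref{funct} with $r=1$, so it is enough to identify that functor with the functor represented by $\S{n_1,n_2}_\b$ over $X$, which sends $f\colon T\to X$ to the set of $T$-flat families of nested ideals $\cI_1(-\cD)\subseteq\cI_2\subseteq\cO_{S\times T}$ as in \eqref{in} lying over $f$; here $\cI_1,\cI_2,\cL_\b$ are the pullbacks along $f$ and $\pi_T$ is the projection to $T$. First, since $E_\bu$ of \eqref{3term} is a complex of vector bundles quasi-isomorphic to $R\hom_\pi(\cI_1,\cI_2\otimes\cL_\b)$ and the $\cI_i$ are flat over $X$, (derived) flat base change gives $f^*E_\bu\cong R\hom_{\pi_T}(\cI_1,\cI_2\otimes\cL_\b)$, hence $h^0(f^*E_\bu)=\hom_{\pi_T}(\cI_1,\cI_2\otimes\cL_\b)$. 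Next, by adjunction a rank $1$ subbundle $U\subseteq f^*E_0$ which is a subsheaf of $h^0(f^*E_\bu)$ is the same datum as a line bundle $U$ on $T$ together with a map $\cI_1\otimes\cL_\b^*\otimes\pi_T^*U\to\cI_2$ that is injective on each surface fibre $S_t$. Passing to reflexive hulls, the induced map $\pi_T^*U\otimes\cL_\b^*\to\cO_{S\times T}$ cuts out a relative effective Cartier divisor $\cD$ with $\cO(\cD)\cong\cL_\b\otimes\pi_T^*U^{-1}$, and the map then becomes an ideal inclusion $\cI_1(-\cD)\subseteq\cI_2\subseteq\cO_{S\times T}$ as in \eqref{in}; conversely such nested ideals recover the line bundle $U=\cL_\b\otimes\cO(\cD)^{-1}$. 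The point to be careful about is that $\Pic_\b(S)$ carries no universal divisor, only the Poincar\'e bundle $\cL_\b$, so the line bundle $U$ is precisely the twist that records the choice of $\cD$ in its linear system; the identifications of $\Hom$-groups with sections (so that, for instance, a nonzero map between rank-$1$ torsion-free sheaves on a surface fibre is automatically injective) reduce to the vanishing $\ext^i_S(\cO_Z,\cO_S)=0$ for $i\neq 2$ and $Z\subset S$ finite, exactly as in the point-level discussion preceding the Proposition and in the $\b=0$ case of \cite{GT1}. Checking that these assignments are mutually inverse and functorial then identifies the two $X$-schemes, compatibly with the classifying map \eqref{2step}.

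For $\wt D_1(\s)\cong\PP^*\big(\ext^2_\pi(\cI_2,\cI_1\otimes K_S\otimes\cL_\b^*)\big)$: by \eqref{Grdef}, $\wt D_1(\s)=\Gr(\coker\s^*,1)=\Proj\Sym^\bu\!(\coker\s^*)=\PP^*(\coker\s^*)$, where $\coker\s^*=h^0(E_\bu^\vee)$ — and this $h^0$ is unchanged if we take $E_\bu$ to be the full $3$-term complex of \eqref{3term}, since its degree-$2$ term only affects $E_\bu^\vee$ in degree $-2$. Now $E_\bu\simeq R\hom_\pi(\cI_1,\cI_2\otimes\cL_\b)$ sits in cohomological degrees $[0,2]$, so $E_\bu^\vee$ lies in degrees $[-2,0]$, and relative Grothendieck--Serre duality for $\pi\colon S\times X\to X$ (relative dimension $2$, relative dualising sheaf $K_S$; valid because the $\cI_i$ are perfect on the smooth variety $S\times X$) gives
$$E_\bu^\vee\ \simeq\ R\hom_\pi(\cI_1,\cI_2\otimes\cL_\b)^\vee\ \simeq\ R\hom_\pi\big(\cI_2,\cI_1\otimes K_S\otimes\cL_\b^*\big)[2].$$
Taking $h^0$ — equivalently $h^2$ of the unshifted complex on the right — yields $\coker\s^*=\ext^2_\pi(\cI_2,\cI_1\otimes K_S\otimes\cL_\b^*)$, which is the asserted description of $\wt D_1(\s)$. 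The tautological-bundle statement is then immediate: on $\wt D_1(\s)=\Gr(\coker\s^*,1)$ the Remarks following Proposition \ref{pcut} identify $\cU=\cO(-1)$ with the dual of the universal rank $1$ quotient of $\coker\s^*$, and under $\Gr(\coker\s^*,1)=\Proj\Sym^\bu\!(\coker\s^*)$ this quotient is exactly the tautological quotient line bundle on $\PP^*$.

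I expect the first step to be the main obstacle: promoting the evident point-level bijection to an isomorphism of functors over $X$, keeping careful track of the twisting line bundle $U$ that is forced by the absence of a universal divisor over $\Pic_\b(S)$, together with the routine but unavoidable base-change and $\Hom$-vanishing inputs.
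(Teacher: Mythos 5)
Your proposal is correct and follows essentially the same route as the paper's own proof: apply Proposition \ref{pcut} with $r=1$, use adjunction to turn a line subbundle of $h^0(f^*E_\bu)$ into a twisted map $\cI_1\to\cI_2\otimes\cL_\b\otimes\pi_T^*U^{-1}$, pass to double duals/reflexive hulls to extract the relative divisor $\cD$ and vice versa, and finally identify $\wt D_1(\s)=\PP^*(\coker\s^*)$ and compute $\coker\s^*$ by relative Serre duality. The only differences are presentational (you make the Serre-duality step more explicit, and phrase "nonzero on each fibre" as "injective on each fibre," which is equivalent for rank-one torsion-free sheaves on a surface, as you note).
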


\begin{proof}
By the $r=1$ case of Proposition \ref{pcut}, $\wt D_1(\s)$ represents the functor which maps an $X$-scheme $f\colon B\to X$ to the set of line subbundles $\curly L\into f^*E_0$ over $B$ factoring through
$$
\curly L\Into h^0(f^*E_\bu)\=\pi\_{B*}\;\hom\big(f^*\cI_1,f^*(\cI_2\otimes\cL_\beta)\big),
$$
where $\pi\_B\colon S\times B\to B$ is the basechange of $\pi$. By adjunction this is equivalent to the set of line bundles $\curly L\to B$ and maps
\beq{ItoI}
f^*\cI_1\Into f^*(\cI_2\otimes\cL_\beta)\otimes\pi_B^*\;\curly L^{-1} \quad\mathrm{over}\ S\times B
\eeq
which are \emph{nonzero on restriction to any fibre $S_b$}. We need to show this is the same as the functor \eqref{in} represented by $S^{[n_1,n_2]}_\beta$, i.e. that \eqref{ItoI} is equivalent to the set of families
\beq{I2I}
f^*\cI_1(-D)\Into f^*\cI_2 \quad\mathrm{over}\ S\times B
\eeq
with $\cI_1,\,\cI_2$ and $D$ flat over $B$ of the correct topological type on each fibre.

Firstly, taking double duals in \eqref{ItoI} gives a map $\cO\to\cO(D)$ for some divisor $D$ which does not contain any fibres and so is flat over $B$. This gives \eqref{I2I} as required. Conversely, given a family \eqref{I2I}, note that $\cO_{S\times B}(D)$ is isomorphic to $f^*\cL_\beta$ on each $S$ fibre, so may be written globally as $f^*\cL_\beta\otimes\pi_B^*\;\curly L^{-1}$, where $\curly L$ is the line bundle $\pi_{B*}\;(f^*\cL_\b(D))$. This gives \eqref{ItoI}, with the map nonzero on every fibre (since $D$ is flat over $B$ and so does not contain any fibre).
\medskip

The second claimed isomorphism follows from the definition \eqref{Grdef} of $\wt D_1(\s)$: it is the space of 1-dimensional quotients of the cokernel of $\s^*\colon E_1^*\to E_0^*$,
$$
\wt D_1(\s)\=\PP^*(\coker\s^*)\=\PP^*\big(h^0(E_\bu^\vee)\big)\=\PP^*\big(\ext^2_\pi(\cI_2,\cI_1\otimes K_S\otimes\cL_\beta^*)\big)
$$
by relative Serre duality down $\pi$.
\end{proof}

\subsection{Embedding and reduced virtual cycle.}\label{redoos} Throughout this Section we require $\beta$ to be of Hodge type (1,1). Until now everything has been true (but vacuous) when this assumption fails, and the results deformation invariant. The constructions of this Section are only invariant under deformations of $S$ inside the Noether-Lefschetz locus in which $\beta$ is of type (1,1).

As discussed in the Introduction, each fibre of $\wt D_1(\s)\to D_1(\s)$ admits a natural embedding
\beq{emb}
\PP\big(\!\Hom(I_1,I_2\otimes L)\big)\ \subseteq\ \PP\big(H^0(L)\big)\,\stackrel{s\_{\!A}}\Into\,\PP\big(H^0(L(A))\big).
\eeq
Here we have fixed, once and for all, $s\_{\!A}\in H^0(\cO_S(A))$ cutting out a divisor $A\subset S$ sufficiently positive that 
$$
H^{\ge 1}(L(A))=0 \quad \quad \forall  L\in \Pic_\b(S).
$$
(When $H^{\ge1}(\cO_S)=0$ and $L$ is effective we may take $A=\emptyset$ and the proof of Theorem \ref{pushBX}  below reduces to a proof of Theorem \ref{ez} instead.) So we let $B$ be the vector bundle
\beq{BB}
B\,:=\,\pi_*(\cL_\beta(A)) \quad\mathrm{over}\ X\,=\,S^{[n_1]}\times S^{[n_2]}\times\Pic_\beta(S).
\eeq
Then \eqref{emb} gives the embedding $h^0(E_\bu)\subset B_x$ of \eqref{Bembed} at each point $x=(I_1,I_2,L)\in X$. The global version \eqref{Esurj2} is the composition of surjections
\begin{align} \nonumber
B^*\,\cong\,R^2\pi_*(\cc L^*_\b(K_S-A))\!\!\onto{\!s_A}\!\!R^2\pi_*(\cL_\beta^*\otimes K_S)\,\cong\,R^2\pi_*(\cI_1\otimes\cL_\beta^*\otimes K_S)
\ &\\ \label{Bsuj}
\onto{}\!\!\ext^2_\pi(\cI_2,\cI_1\otimes\cL_\beta^*\otimes K_S).&
\end{align}
By \eqref{iot2} we get an embedding
\beq{inB}
\iota\_{B}\,\colon\ S_\beta^{[n_1,n_2]}\ \cong\ \wt D_1(E_\bu)\Into\PP(B)\rt{q}X.
\eeq
Suppose now that, as in the statements of Theorems \ref{ez} and \ref{pushBX},
\beq{assu}
H^2(\cO(D))=0 \quad\mathrm{\ for\ any\ effective\ divisor\ }D\mathrm{\ in\ class\ }\beta.
\eeq
The resulting surjection
$$
0\ =\ H^2(L)\ \cong\ \Ext^2(\cO,I_2(L)) \onto{} \Ext^2(I_1,I_2(L))\ \cong \ h^2(E_\bu|_x)
$$
and basechange show that $E_\bu$ has $h^{\ge2}=0$ over the image of $\S{n_1,n_2}_\b$ in $X$, and therefore also over a neighbourhood $U$ thereof. Therefore in \eqref{3term} we can take $E_\bu$ to be a 2-term complex of vector bundles  
\beq{Unhd}
R\hom_\pi(\cI_1,\cI_2\otimes\cL_\beta)\ \cong\ \big\{E_0\rt\s E_1\big\}\,=:\,E_\bu \quad\mathrm{over}\ U\subseteq X.
\eeq
Proposition \ref{props} then gives
$$
\S{n_1,n_2}_\b\ \cong\ \wt D_1(E_\bu)
$$ 
and Proposition \ref{vcb} endows it with a perfect obstruction theory which we call the \emph{reduced} obstruction theory.\footnote{This is because \eqref{VW-h2} shows that at the level of virtual tangent bundles it is obtained from the Vafa-Witten perfect obstruction theory by removing a copy of $H^2(\cO_S)$.}

\begin{thm}\label{robs}
Suppose $H^2(\cO(D))=0$ for every effective divisor $D$ in class $\beta$. The above construction gives a reduced obstruction theory with virtual tangent bundle
\begin{multline} \label{VW-h2}
T^{\vir}_{S^{[n_1,n_2]}_\b}\=
-R\hom_{\pi}(\cI_1,\cI_1)\_0-R\hom_{\pi}(\cI_2,\cI_2)\_0\,+\\[-4pt]
R\hom_\pi\big(\cI_1,\cI_2\otimes\cL_\b(1)\big)+R^1\pi_*\;\cO-\cO 
\end{multline}
in K-theory, and reduced virtual cycle
\beq{redvc}
\big[\S{n_1,n_2}_\b\big]^{\op{red}}\ \in\ A_{n_1+n_2+\vd_\b+p_g}\big(\S{n_1,n_2}_\b\big),
\eeq
where $\cO(1):=\cO_{\bb P(B)}(1)$ and $\vd_\b:=\b(\b-K_S)/2$. \medskip

\noindent If $H^2(L)=0$ for all $L\in\Pic_\b(S)$ then the pushforward of \eqref{redvc} to $\PP(B)$ is
$$
\iota\_{B*}\big[\S{n_1,n_2}_\b\big]^{\op{red}}\=c\_{n_1+n_2+d}\big(B(1)-R\hom_\pi\big(\cI_1,\cI_2\otimes\cc L_\b(1)\big)\big),
$$
where $d=\frac12A.(2\b+A-K_S)=\chi(L(A))-\chi(L)$ for any $L\in\Pic_\beta(S)$.
\end{thm}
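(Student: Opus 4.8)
The plan is to derive all three assertions from Propositions \ref{props} and \ref{vcb} and from Theorem \ref{bar}, applied to $E_\bu = R\hom_\pi(\cI_1,\cI_2\otimes\cL_\beta)$ of \eqref{3term} over $X = S^{[n_1]}\times S^{[n_2]}\times\Pic_\beta(S)$; the only preliminary is to know $E_\bu$ is $2$-term where it is needed. By \eqref{Unhd}, the hypothesis $H^2(\cO(D)) = 0$ for effective $D$ together with basechange gives $h^{\ge2}(E_\bu) = 0$ over a neighbourhood $U\supseteq\S{n_1,n_2}_\beta$, so Proposition \ref{props} identifies $\S{n_1,n_2}_\beta\cong\wt D_1(E_\bu)$ over $U$ and Proposition \ref{vcb} with $r=1$ equips it with the (reduced) perfect obstruction theory. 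Under the stronger hypothesis $H^2(L) = 0$ for \emph{all} $L\in\Pic_\beta(S)$ one has $U = X$, so $E_\bu$ is globally $2$-term and Theorem \ref{bar} is available on all of $\PP(B) = \Gr(1,B)$.

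First I would compute the virtual tangent bundle. Dualising the K-theory class $\Omega_X + \cU\otimes(E_\bu^\vee - \cU^*)$ of $\LL^{\vir}_{\wt D_1}$ from Proposition \ref{vcb} and using that $\cU$ is a line bundle (so $\cU^*\otimes\cU = \cO$) gives $T^{\vir} = T_X + \cU^*\otimes E_\bu - \cO$. Under the embedding $\iota_B\colon\wt D_1\into\PP(B)$ of \eqref{inB} one has $\cU\cong\cO_{\PP(B)}(-1)|_{\wt D_1}$ by Proposition \ref{props}, so $\cU^*\otimes E_\bu = R\hom_\pi(\cI_1,\cI_2\otimes\cL_\beta(1))$ by the projection formula; substituting the standard K-theory identities $T_{S^{[n_i]}} = -R\hom_\pi(\cI_i,\cI_i)_0$ (which rest on the fibrewise vanishings $\Ext^0(\cI_i,\cI_i)_0 = 0 = \Ext^2(\cI_i,\cI_i)_0$) and $T_{\Pic_\beta(S)} = R^1\pi_*\cO$ then yields \eqref{VW-h2}. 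For the dimension, Proposition \ref{vcb} gives $\vd = \dim X - (e_1 - e_0 + 1)$; here $\dim X = 2n_1 + 2n_2 + h^1(\cO_S)$, while Riemann--Roch on $S$ (with $\ch(\cI_i) = 1 - n_i[\pt]$, $\ch(L) = 1 + \beta + \frac12\beta^2$) evaluates $e_0 - e_1 = \chi(\cI_1,\cI_2\otimes L) = \vd_\beta - n_1 - n_2 + \chi(\cO_S)$, and combining gives $\vd = n_1 + n_2 + \vd_\beta + p_g$, as in \eqref{redvc}.

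For the pushforward I would apply the form \eqref{leo} of Theorem \ref{bar} with $r = 1$, $B = \pi_*(\cL_\beta(A))$ and the globally $2$-term $E_\bu$. Its hypothesis \eqref{Esurj2} is the surjection \eqref{Bsuj} (and for $r = 1$ follows anyway from \eqref{Esurj}), so $\iota_{B*}[\wt D_1]^{\op{red}} = c_{\,b - e_0 + e_1}(\cU_B^*\otimes(B - E_\bu))$. With $\cO(1) := \cO_{\PP(B)}(1) = \cU_B^*$, this is $c_{\,b - e_0 + e_1}(B(1) - R\hom_\pi(\cI_1,\cI_2\otimes\cL_\beta(1)))$, and it remains to see the degree is $n_1 + n_2 + d$: since $A$ is ample enough, $b = \rk B = h^0(L(A)) = \chi(L(A))$; Riemann--Roch gives $\chi(L) = \chi(\cO_S) + \vd_\beta$ and $d = \chi(L(A)) - \chi(L) = \frac12 A\cdot(2\beta + A - K_S)$; and $e_0 - e_1 = \chi(L) - n_1 - n_2$ from the preceding step, so $b + e_1 - e_0 = n_1 + n_2 + d$.

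I expect the main obstacle to be bookkeeping rather than anything conceptual: one must carry the $\cO(1)$-twist consistently through the identifications $\cU\cong\cO(-1)|_{\wt D_1}$ and $\iota_B^*\cU_B\cong\iota^*\cU$ --- it is what supplies the $\cL_\beta(1)$ in \eqref{VW-h2} and both the $B(1)$ and the $\cL_\beta(1)$ in the pushforward formula --- while also keeping in mind that for the first two assertions the obstruction theory is constructed only over the neighbourhood $U$ on which $E_\bu$ is $2$-term, which causes no trouble since $\S{n_1,n_2}_\beta\subseteq U$.
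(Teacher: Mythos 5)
Your proposal is correct and takes essentially the same route as the paper: the virtual tangent bundle comes from Proposition \ref{vcb} with $r=1$ and $\cU = \cO(-1)$ (together with the identifications $T_{\Pic_\beta(S)}\cong R^1\pi_*\cO$ and $T_{S^{[n_i]}}\cong R\hom_\pi(\cI_i,\cI_i)_0[1]$), and the pushforward formula comes from applying Theorem \ref{bar} with $r=1$ once $E_\bu$ is globally two-term. The only difference is that you spell out the Riemann--Roch bookkeeping for the virtual dimension and for the index $n_1+n_2+d$, which the paper leaves implicit; your computations check out.
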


\begin{proof}
The virtual tangent bundle is described by the $r=1$ case of  Proposition \ref{vcb} with $\cU=\cO(-1)$, giving the restriction of
$$
T_X-\cO_X-\cO(-1)^*\otimes R\hom_\pi(\cI_1,\cI_2\otimes\cL_\b)
$$
to $\S{n_1,n_2}_\b$. Since $X=S^{[n_1]}\times S^{[n_2]}\times\Pic_\beta(S)$ we derive \eqref{VW-h2} from
$$
T_{\Pic_\beta(S)}\,\cong\,R^1\pi_*\;\cO\quad\mathrm{and}\quad
T_{S^{[n_i]}}\,\cong\,\ext^1_\pi(\cI_i,\cI_i)\_0\,\cong\,R\hom_\pi(\cI_i,\cI_i)\_0[1].
$$

When $H^2(L)=0$ for all $L\in\Pic_\b(S)$ then $E_\bu$ is 2-term over the whole of $X$, so we may apply Theorem \ref{bar} with $r=1$ to give the formula for the pushforward.
\end{proof}

\begin{rmk} This proves Theorems \ref{ez} and \ref{pushBX} from the Introduction. Since the virtual class depends only on the K-theory class of the virtual tangent bundle, Theorem \ref{robs} shows that $\big[\S{n_1,n_2}_\b\big]^{\op{red}}$ coincides with the reduced virtual cycles of \cite{KT1} (when $n_1=0$) and \cite[Theorem 3]{GSY1} (when the latter is defined).
\end{rmk}

\subsection{Comparison of reduced cycles.}\label{comp1}
Next we will compare the reduced cycles on $S^{[n_1,n_2]}_\b$ and $S_\b$. Note that $S^{[n_1,n_2]}_\b$ and $S_\b\times S^{[n_1]}\times S^{[n_2]}$ are the virtual resolutions of the degeneracy loci of
$$
R\hom_\pi(\cI_1,\cI_2\otimes\cL_\b) \quad\mathrm{and}\quad R\pi_*\;\cL_\b\quad\mathrm{over\ }X
$$
respectively. These are related by the obvious diagram
\beq{vert}
\xymatrix@R=18pt{
& R\pi_*\;\cL_\b \ar[d] \\
R\hom_\pi(\cI_1,\cI_2\otimes\cL_\b) \ar@{..>}^u[ur]\ar[r]&
R\hom_\pi(\cI_1,\cL_\b) \ar@{.>}@/^{-2ex}/[u]\ar[d] \\
& H[-1] \ar@{.>}@/^{-2ex}/[u]}
\eeq
with the vertical column an exact triangle.
Here $H:=\ext^2_\pi(\cO/\cI_1,\cL_\b)$, so
$$
H^*\ \cong\ \big(\cc L^*_\b\otimes K_S\big)^{[n_1]}
$$
by relative Serre duality down $\pi$. Thus $H$ is a vector bundle.

So we now use the Jouanolou trick of Section \ref{Jtrick}, pulling back to an affine bundle over $U$, so that $H$ becomes a projective $\cO$-module. In particular its connecting homomorphism to $R\pi_*\;\cL_\b[2]$ in the vertical exact triangle \eqref{vert} is zero over the open neighbourhood $U$ \eqref{Unhd} over which it is supported in degrees $-2$ and $-1$. (Recall our assumption \eqref{assu} that $H^2(\cO_S(D))=0$ for every effective $D$ of class $\beta$.)
So over $U$ we may choose the splittings marked with dotted arrows in \eqref{vert}.

\begin{lem} \label{vbG} $\op{Cone}(u)$ is quasi-isomorphic to a rank $n_1+n_2$ vector bundle.
\end{lem}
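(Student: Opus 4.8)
The plan is to unwind the definition of $u$ from the diagram \eqref{vert} and identify $\op{Cone}(u)$ with a genuine vector bundle using the octahedral axiom. First I would note that $u$ is the composition $R\hom_\pi(\cI_1,\cI_2\otimes\cL_\b)\to R\hom_\pi(\cI_1,\cL_\b)\to R\pi_*\cL_\b$, where the first map is induced by $\cI_2\hookrightarrow\cO$ and the second is the splitting of the top square (available over $U$ since the connecting map to $R\pi_*\cL_\b[2]$ vanishes there). The second map is, up to the chosen splitting, a quasi-isomorphism $R\hom_\pi(\cI_1,\cL_\b)\cong R\pi_*\cL_\b\oplus H[-1]$ onto its first summand; so $\op{Cone}(u)$ agrees with the cone of $R\hom_\pi(\cI_1,\cI_2\otimes\cL_\b)\to R\hom_\pi(\cI_1,\cL_\b)$ shifted by the (vector bundle) error term $H[-1]$. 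More precisely, applying the octahedral axiom to the two-step composition, $\op{Cone}(u)$ sits in an exact triangle with $\op{Cone}\big(R\hom_\pi(\cI_1,\cI_2\otimes\cL_\b)\to R\hom_\pi(\cI_1,\cL_\b)\big)$ and $\op{Cone}\big(R\hom_\pi(\cI_1,\cL_\b)\to R\pi_*\cL_\b\big)\cong H$.

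Next I would compute the first of those cones. Applying $R\hom_\pi(\cI_1,-)$ to the short exact sequence $0\to\cI_2\otimes\cL_\b\to\cL_\b\to(\cO/\cI_2)\otimes\cL_\b\to0$ shows that this cone is $R\hom_\pi\big(\cI_1,(\cO/\cI_2)\otimes\cL_\b\big)$. Since $\cO/\cI_2$ is the structure sheaf of a length-$n_2$ subscheme finite over $X$, flat over $X$, and $\cI_1$ restricts to $\cO$ in a neighbourhood of its support generically — more carefully, $R\hom_\pi(\cI_1,(\cO/\cI_2)\otimes\cL_\b)$ is computed fibrewise and, on each fibre $S_b$, $\op{Ext}^i_{S_b}(I_1,(\cO_{Z_2})\otimes L)$ vanishes for $i\neq 0$ because $\cO_{Z_2}$ has zero-dimensional support and $I_1$ is a rank-one torsion-free (hence locally free in codimension one, and in fact locally free away from $Z_1$) sheaf; one checks the local $\ext$-sheaves vanish and $\hom(I_1,\cO_{Z_2}\otimes L)$ has length $n_2$. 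Hence $R\hom_\pi(\cI_1,(\cO/\cI_2)\otimes\cL_\b)$ is a vector bundle of rank $n_2$ on $X$ (by cohomology-and-base-change, since the fibre dimension is constant).

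Finally I would assemble: $\op{Cone}(u)$ sits in an exact triangle
$$
H\To\op{Cone}(u)\To R\hom_\pi\big(\cI_1,(\cO/\cI_2)\otimes\cL_\b\big)
$$
after the Jouanolou pullback, with both outer terms vector bundles — $H$ of rank $n_1$ (as $H^*\cong(\cc L_\b^*\otimes K_S)^{[n_1]}$ is the tautological rank-$n_1$ bundle on $S^{[n_1]}$, pulled back) and $R\hom_\pi(\cI_1,(\cO/\cI_2)\otimes\cL_\b)$ of rank $n_2$. A vector bundle extended by a vector bundle (the connecting map $R\hom_\pi(\cI_1,(\cO/\cI_2)\otimes\cL_\b)\to H[1]$ has no target in a vector bundle, so it forces the triangle to split into a two-term complex of bundles concentrated in degree $0$) is again a vector bundle, of rank $n_1+n_2$, as claimed.

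I expect the main obstacle to be the bookkeeping in the octahedral argument: one must be careful that the chosen splittings of \eqref{vert} are only available over $U$ and after the Jouanolou pullback, and that $\op{Cone}(u)$ as defined via the dotted arrow $u$ in \eqref{vert} literally equals the composition I described (rather than differing by the vector-bundle summand $H[-1]$ in an inconvenient way). Once the identification $\op{Cone}(u)\simeq R\hom_\pi(\cI_1,(\cO/\cI_2)\otimes\cL_\b)\oplus H$ in the derived category is pinned down, the vanishing of the relevant $\ext$-sheaves and constancy of fibre dimensions — hence local freeness — is routine base-change.
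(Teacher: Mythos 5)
Your central claim --- that $R\hom_\pi(\cI_1,(\cO/\cI_2)\otimes\cL_\b)$ is a rank-$n_2$ vector bundle --- is false, and this is the gap. Over a closed point $(I_1,I_2,L)\in U$ the object restricts to $R\Hom_S(I_1,\cO_{Z_2}\otimes L)$, and $\Ext^1_S(I_1,\cO_{Z_2}\otimes L)$ need not vanish. If $Z_1$ meets $Z_2$, e.g.\ $I_1=I_2=\mathfrak m_p$ (which arises from points of $\S{1,1}_\b$), the Koszul resolution $0\to\cO\to\cO^{\oplus 2}\to\mathfrak m_p\to0$ gives $\ext^0(\mathfrak m_p,\cO_p)\cong\cO_p^{\oplus2}$ and $\ext^1(\mathfrak m_p,\cO_p)\cong\cO_p$, so $\dim\Hom(I_1,\cO_{Z_2}\otimes L)=2$ and $\dim\Ext^1(I_1,\cO_{Z_2}\otimes L)=1$, both jumping from their generic values $(n_2,0)=(1,0)$. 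The Euler characteristic and K-theory class are still $n_2$, but the complex has two nonzero, non-constant-rank cohomology sheaves and so is not quasi-isomorphic to a bundle. Hence one outer term of your octahedral triangle fails to be a bundle, and the ``connecting map vanishes for degree reasons'' step collapses. (Also, the octahedral triangle actually runs $R\hom_\pi(\cI_1,(\cO/\cI_2)\otimes\cL_\b)\to\Cone(u)\to H$, the opposite direction from what you wrote.)

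The paper's proof never factors through $\cO/\cI_2$. It checks directly that the two-term complex $\Cone(u)$ has fibrewise cohomology concentrated in degree $0$: taking $h^0$ of \eqref{vert} at each closed point of $U$ shows the diagonal map $h^0(u)\colon\Hom(I_1,I_2\otimes L)\to H^0(L)$ is injective, so by Nakayama $h^{<0}(\Cone(u))=0$ after any basechange; taking $h^1$ and chasing shows $h^1(u)\colon\Ext^1(I_1,I_2\otimes L)\to H^1(L)$ is surjective, using that $\Ext^1(I_1,I_2\otimes L)$ surjects onto $\ker(a)\supseteq H^1(L)$ and that the chosen splitting retracts $\Ext^1(I_1,L)$ onto $H^1(L)$; hence $h^{\ge1}(\Cone(u))=0$ as well. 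The composite $h^1(u)$ behaves uniformly even though the intermediate groups $\Ext^1(I_1,\cO_{Z_2}\otimes L)$ jump --- precisely the cancellation your route discards by breaking the map up.
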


\begin{proof} 
Taking $H^0$ of \eqref{vert} at a closed point $x=(I_1,I_2,L)$ of $U$  gives
$$
\xymatrix@=18pt{
& H^0(L) \ar@{=}[d] \\ \Hom(I_1,I_2\otimes L)\ \ar@{^(->}[r]\ar[ur]^{h^0(u)} & \,\Hom(I_1,L).}
$$ 
Since this diagram commutes the diagonal arrow is an injection. Thus, by the Nakayama lemma, $\op{Cone}(u)$, and any basechange of it, has $h^{<0}=0$.

Taking $H^1$ instead gives part of the diagram
$$
\xymatrix@=18pt{
H^1(I_2\otimes L)\ \ar[r] \ar[d]& H^1(L) \ar@{_(->}[d]\ar[r]& 0 \ar[d] \\ \Ext^1(I_1,I_2\otimes L) \ar[r]\ar[ur]^(.35){h^1(u)} & \Ext^1(I_1,L) \ar@/^{-2ex}/[u]\ar[r]^-a& \Ext^1\!\big(I_1,L/(L\otimes I_2)\big).}
$$
Again this commutes and the rows are exact. Since $\Ext^1(I_1,I_2\otimes L)$ surjects onto $\ker\;(a)$, in which $H^1(L)$ lies, the diagonal arrow is surjective. Thus $\Cone\;(u)$ has $h^{\ge1}=0$ and is a vector bundle as claimed.
\end{proof}

So it follows immediately from Theorem \ref{Gee} and the Jouanolou trick that
$$
\big[S^{[n_1,n_2]}_\b\big]^{\op{red}}\=c_{n_1+n_2}\big(\!\Cone\;(u)(1)\big)\cap\big[S_\b\big]^{\op{red}}\times\big[S^{[n_1]}\times S^{[n_2]}\big].
$$
On $S\times S_\beta$ the tautological composition $\cO(-1)\to\pi^*\pi_*\cL_\beta\to\cL_\b$ gives the universal section
\beq{univs}
\cO\rt{s_\b}\cL_\b(1) \quad\mathrm{cutting\ out\ the\ universal\ divisor}\ \cD_\b\subset S\times S_\beta.
\eeq
In particular $\cL_\b(1)\cong\cO(\cD_\b)$ on $S\times S_\beta$.
Thus $\Cone\;(u)(1)$ represents the Carlsson-Okounkov K-theory class of \eqref{COdef}:
$$
\Cone\;(u)(1)\=\mathsf{CO}_\beta^{[n_1,n_2]}\=R\pi_*\;\cO(\cD_\b)-R\hom_\pi(\cI_1,\cI_2(\cD_\b)).
$$
Hence we have proved the second formula in Theorem \ref{comparison}:

\begin{cor} If $H^2(\cO_S(D)) =0$ for all effective divisors in class $\beta$ then
\[
\big[S^{[n_1,n_2]}_\b\big]^{\op{red}}\=
c_{n_1+n_2}\big(\mathsf{CO}_\beta^{[n_1,n_2]}\big) \cap \big[\S{n_1}\times\S{n_2}\big] \times \big[S_\b\big]^{\op{red}}.\vspace{-6mm}
\]
$\hfill\square$
\end{cor}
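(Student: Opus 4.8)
The plan is to read the Corollary off Theorem~\ref{Gee}, fed by Lemma~\ref{vbG} and a short K-theory computation of a twisted cone --- indeed this is precisely the second formula of Theorem~\ref{comparison}, and all the substantive work has already been assembled in the paragraphs above, so what remains is to put the pieces together. Throughout I would work over the open neighbourhood $U\subseteq X$ of \eqref{Unhd}: this is where the hypothesis $H^2(\cO_S(D))=0$ for effective $D$ in class $\beta$ enters, ensuring that both $R\hom_\pi(\cI_1,\cI_2\otimes\cL_\b)$ and $R\pi_*\,\cL_\b$ are represented there by $2$-term complexes of vector bundles in degrees $0,1$. After the Jouanolou trick of Section~\ref{Jtrick} the sheaf $H$ in \eqref{vert} becomes projective, so its connecting map to $R\pi_*\cL_\b[2]$ vanishes, the vertical triangle of \eqref{vert} splits, and the dotted arrow $u$ is realised as a genuine map of $2$-term complexes $R\hom_\pi(\cI_1,\cI_2\otimes\cL_\b)\to R\pi_*\cL_\b$ fitting into an exact triangle whose third term is $\Cone(u)$.

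By Lemma~\ref{vbG}, $\Cone(u)$ is quasi-isomorphic to a vector bundle of rank $n_1+n_2$, which puts us in the first case of Theorem~\ref{Gee} with $F_\bu=R\hom_\pi(\cI_1,\cI_2\otimes\cL_\b)$, $E_\bu=R\pi_*\cL_\b$, $G=\Cone(u)$, $r=1$ and $g=n_1+n_2$. That theorem supplies the embedding
\[
\iota\,\colon\ S^{[n_1,n_2]}_\b\ \cong\ \wt D_1(F_\bu)\ \Into\ \wt D_1(E_\bu)\ \cong\ S_\b\times S^{[n_1]}\times S^{[n_2]},
\]
under which the tautological line bundle $\cU$ on $\wt D_1(E_\bu)$ pulls back to that on $\wt D_1(F_\bu)$, together with the identity
\[
\iota_*\big[S^{[n_1,n_2]}_\b\big]^{\op{red}}\ =\ c_{n_1+n_2}\big(\cU^*\otimes\Cone(u)\big)\cap\big[\wt D_1(R\pi_*\cL_\b)\big]^{\vir}.
\]
Here the left-hand class is the reduced cycle of Theorem~\ref{robs} (the virtual cycle of $\wt D_1(F_\bu)$), and $\big[\wt D_1(R\pi_*\cL_\b)\big]^{\vir}=\big[S^{[n_1]}\times S^{[n_2]}\big]\times\big[S_\b\big]^{\op{red}}$ since $R\pi_*\cL_\b$ is pulled back from $\Pic_\b(S)$, so that the obstruction theory of Proposition~\ref{vcb} splits off the smooth factors $S^{[n_i]}$ and restricts over $\Pic_\b(S)$ to the reduced obstruction theory of $S_\b$.

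It remains to evaluate $c_{n_1+n_2}\big(\cU^*\otimes\Cone(u)\big)$. Writing $\cO(1)$ for the restriction of $\cO_{\PP(B)}(1)$ to $\wt D_1$, we have $\cU^*\cong\cO(1)$, so $\cU^*\otimes\Cone(u)\cong\Cone(u)(1)$, and I would identify this class in K-theory: from the triangle, $\Cone(u)=R\pi_*\cL_\b-R\hom_\pi(\cI_1,\cI_2\otimes\cL_\b)$, and twisting by $\cO(1)$ --- which is pulled back along the base, hence commutes with $R\pi_*$ and $R\hom_\pi$ --- turns $\cL_\b(1)$ into $\cO(\cD_\b)$ by the universal section \eqref{univs}, so
\[
\Cone(u)(1)\ =\ R\pi_*\cO(\cD_\b)-R\hom_\pi\big(\cI_1,\cI_2(\cD_\b)\big)\ =\ \mathsf{CO}_\beta^{[n_1,n_2]}
\]
as in \eqref{COdef}. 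Since this K-theory class again has rank $n_1+n_2$, its $c_{n_1+n_2}$ agrees with $c_{n_1+n_2}\big(\cU^*\otimes\Cone(u)\big)$; substituting into the displayed identity gives the formula of the Corollary.

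As for the main obstacle: all the genuinely non-formal content is upstream --- realising $u$ as a map of $2$-term complexes (the Jouanolou splitting, which needs the $H^2$-vanishing) and proving $\Cone(u)$ is an honest vector bundle, which is Lemma~\ref{vbG} and comes down to the two diagram chases on $H^0$ and $H^1$ there. Granting those, the Corollary is bookkeeping; the two points to watch are the twist by $\cO(1)$ in the last display, and confirming one is in the first rather than the second case of Theorem~\ref{Gee}, i.e.\ that $\Cone(u)$ and not $\Cone(u)[1]$ is the vector bundle --- which is exactly what Lemma~\ref{vbG} asserts, with $\rk\Cone(u)=n_1+n_2>0$.
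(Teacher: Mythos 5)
Your proposal is correct and tracks the paper's own proof essentially step by step: work over $U$ where both complexes are $2$-term, apply the Jouanolou trick to split the vertical triangle in \eqref{vert}, invoke Lemma~\ref{vbG} to see $\Cone(u)$ is a rank-$(n_1+n_2)$ bundle, feed this into the first case of Theorem~\ref{Gee}, and finally identify $\Cone(u)(1)$ with $\mathsf{CO}^{[n_1,n_2]}_\beta$ via $\cL_\b(1)\cong\cO(\cD_\b)$. You have made explicit one point the paper leaves implicit, namely that $\big[\wt D_1(R\pi_*\cL_\b)\big]^{\vir}=\big[\S{n_1}\times\S{n_2}\big]\times[S_\b]^{\op{red}}$ because $R\pi_*\cL_\b$ is pulled back from $\Pic_\b(S)$.
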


\subsection{Comparison of virtual cycles.}\label{preaff}
To recover the full perfect obstruction theory of \cite{GSY1} or Vafa-Witten theory on $S_\beta^{[n_1,n_2]}$ from (the virtual resolution of) a degeneracy locus we would have to modify the 3-term complexes $R\hom_\pi(\cI_1,\cI_2\otimes\cL_\b)$ and $R\pi_*\;\cL_\b$ by $H^2(\cO_S)$ terms to make them 2-term.
This cannot be done over $X=S^{[n_1]}\times S^{[n_2]}\times\Pic_\b(S)$.\footnote{A referee suggested the following illustrative example. Let $\beta$ be the fibre class in an elliptic $K3$ surface $S$ and take $n_1=0=n_2$. Then $S_\beta=\PP^1$ and its Vafa-Witten virtual tangent bundle is easily worked out to be $T_{\PP^1}-\cO_{\PP^1}$ (using \eqref{VWTvir}, for example). To see $\PP^1$ as $\wt D_1$ of a complex over $S^{[n_1]}\times S^{[n_2]}\times\Pic_\b(S)=\,$point we must take $\C^2\rt0\C^q$, but this has virtual tangent bundle $T_{\PP^1}-\cO_{\PP^1}^{\oplus q}(1)$.}

However, up on (the product of $S$ with) $S^{[n_1]}\times S^{[n_2]}\times S_\b$ we can use the universal section $s_\b$ \eqref{univs} of $\cL_\b(1)$ --- notice the twist! --- together with a choice of splitting $R\pi_*\;\cO\cong \bigoplus_{i=0}^2R^i\pi_*\;\cO[-i]$, to induce maps
$$
\hspace{5mm}\xymatrix@R=16pt@C=14pt{
& R^2\pi_*\;\cO[-2] \ar[d]^{s\_\b}\ar@{.>}[dl]\ar[dr]\\
R\pi_*\big(\cI_2\otimes\cL_\b(1)\big) \ar[r]&
R\pi_*\;\cL_\b(1) \ar[r]& \pi_*\big((\cO/\cI_2)\otimes\cL_\b(1)\big)&
\hspace{-3mm}\mathrm{on}\ S^{[n_1,n_2]}_\b.}
$$
The lower row is an exact triangle.
We apply the Jouanolou trick of Section \ref{Jtrick}, pulling back from $S^{[n_1]}\times S^{[n_2]}\times S_\b$ to an affine bundle over it. Here $R^2\pi_*\;\cO$ becomes a projective $\cO$-module, so the right hand diagonal arrow becomes zero. Thus we can fill in the dotted arrow, giving a commutative diagram
\beq{ddgg}
\xymatrix@R=16pt@C=-4pt{
& \hspace{-5mm}R^2\pi_*\;\cO[-2]\hspace{-5mm} \ar[dr]\ar[dl]&
H[-2] \ar[d] \\
R\pi_*(\cI_2\otimes\cL_\b(1)) \ar[rr]\ar[d]&&
R\pi_*\;\cL_\b(1) \ar[d] \\
R\hom_\pi(\cI_1,\cI_2\otimes\cL_\b(1)) \ar[rr]&&
R\hom_\pi(\cI_1,\cL_\b(1)).}
\eeq
Here the right hand column is an exact triangle, with $H:=\ext^2_\pi(\cO/\cI_1,\cL_\b(1))$ a vector bundle Serre dual to
$$
H^*\ \cong\ \big(K_S\otimes\cL_\b^{-1}(-1)\big)^{[n_1]}.
$$
On second cohomology sheaves $h^2$ we claim the diagram gives surjections
\beq{sur}
\qquad\xymatrix@R=16pt@C=0pt{
& \hspace{-2mm}R^2\pi_*\;\cO\hspace{-2mm} \ar@{->>}[dl]_a\ar@{->>}[dr]^b\\
R^2\pi_*(\cI_2\otimes\cL_\b(1)) \ar@{=}[rr]^-c\ar@{->>}[d]_d&&
R^2\pi_*\;\cL_\b(1) \ar@{->>}[d]^e \\
\ext^2_\pi(\cI_1,\cI_2\otimes\cL_\b(1)) \ar@{->>}[rr]^-f&&
\ext^2_\pi(\cI_1,\cL_\b(1)).}
\eeq
The claim for the maps $b,c,d,f$ follows from zeros in the obvious exact sequences in which each sits. It then follows from the diagram that $a$ and $e$ are also onto. So if we use the notation
$$
\tau(\cF)\ :=\ \Cone\big(R^2\pi_*\;\cO[-2]\To\cF\big)
$$
for any of the maps $a,b,d\!\;\circ\;\!a$ or $e\!\;\circ\!\;b$ in \eqref{ddgg}, then each $\tau(\cF)$ has $h^{\ge2}=0$ and can be represented by a 2-term complex of vector bundles.
The bottom row and right hand column of \eqref{ddgg} now give the diagram
\beq{vert7}
\xymatrix@R=16pt{
& \tau\big(R\pi_*\;\cL_\b(1)\big) \ar[d] \\
\tau\big(R\hom_\pi(\cI_1,\cI_2\otimes\cL_\b(1))\big) \ar@{.>}[ur]^u\ar[r]&
\tau\big(R\hom_\pi(\cI_1,\cL_\b(1))\big) \ar[d]\ar@{.>}@/^{-2ex}/[u]\\
& H[-1], \ar@{.>}@/^{-2ex}/[u]}
\eeq
which should be compared to \eqref{vert}. Again the column is an exact triangle, and again $H$ is a projective $\cO$-module on the affine bundle so has vanishing connecting homomorphism to the complex $\tau\big(R\pi_*\;\cL_\b(1)\big)[2]$ supported in degrees $-2$ and $-1$. Thus we may split the vertical exact triangle and fill in the dotted arrows on $S^{[n_1]}\times S^{[n_2]}\times S_\b$. \medskip

\noindent\textbf{Extension to ambient space.} Let $B=\pi_*(\cL(A))$ be the bundle \eqref{BB}. Applying \eqref{inB} when $n_1=0=n_2$ gives an embedding $S_\b\subseteq\PP(B)/\Pic_\b(S)$ and so
$$
S^{[n_1]}\times S^{[n_2]}\times S_\b\ \subseteq\ \PP(B)/X,
$$
where $X=S^{[n_1]}\times S^{[n_2]}\times\Pic_\beta(S)$ as in \eqref{2step}.

We apply the Jouanolou trick to $\PP(B)$, giving an affine bundle over it whose total space is an affine variety. The restriction of this affine bundle to $S^{[n_1]}\times S^{[n_2]}\times S_\b$ is therefore also an affine variety, to which the constructions of the last Section \ref{preaff} apply. As usual we suppress pullbacks to these affine bundles.

All of the terms in \eqref{ddgg} extend to (the affine bundle over) $\PP(B)$, as do the horizontal and vertical arrows (canonically). As for the diagonal arrows, their targets are quasi-isomorphic to complexes of vector bundles $G_0\to G_1\to G_2$, and $R^2\pi_*\;\cO$ is a projective $\cO$-module on an affine variety, so the arrows can be represented by maps $R^2\pi_*\;\cO\to G_2$. Again since $R^2\pi_*\;\cO$ is a projective $\cO$-module these maps can be extended to (the affine bundle over) $\PP(B)\supset S^{[n_1]}\times S^{[n_2]}\times S_\b$. So we get an extension of the diagram \eqref{vert7} to (the affine bundle over) $\PP(B)$.

Since the surjectivity \eqref{sur} is an open condition, all the terms of this extended diagram can be represented by 2-term complexes of vector bundles over a neighbourhood $U$ of (the affine bundle over) $S^{[n_1]}\times S^{[n_2]}\times S_\b$ inside (the affine bundle over) $\PP(B)$. By shrinking $U$ if necessary, we may assume that it is affine.\footnote{Let $U=P\take Z$ be an open neighbourhood of a closed subvariety $S$ of an affine variety $P$. Then $Z$ is disjoint from $S$ so we get a surjection $\cO[P]\to\hspace{-4mm}\to\cO[S\sqcup Z]\cong\cO[S]\oplus\cO[Z]$. Let $f$ map to $(1,0)$. Then $P\take f^{-1}(0)\subseteq U$ is an affine neighbourhood of $S$.}

So finally the splitting map $H\to\tau\big(R\hom_\pi(\cI_1,\cL_\b(1))\big)$ of the vertical exact sequence in \eqref{vert7} extends from (the affine bundle over) $S^{[n_1]}\times S^{[n_2]}\times S_\b$ to its affine neighbourhood $U$ since $H|_U$ is a projective $\cO$-module (and hence has vanishing connecting homomorphism to the complex $\tau\big(R\pi_*\;\cL_\b(1)\big)[2]$ supported in degrees $-2$ and $-1$). So we can fill in the dotted arrows, giving 
$$
\hspace{-15mm}\xymatrix@=15pt{
& \tau\big(R\pi_*\;\cL_\b(1)\big) \ar[d] \\
\tau\big(R\hom_\pi(\cI_1,\cI_2\otimes\cL_\b(1))\big) \ar[ur]^u\ar[r]&
\tau\big(R\hom_\pi(\cI_1,\cL_\b(1))\big) \ar@/^{-2ex}/[u]}\vspace{-17mm}
$$
\beq{vert9}
\hspace{9.5cm}\mathrm{over}\ U.\vspace{9mm}
\eeq
Lemma \ref{vbG} now applies verbatim to the map $u$ of \eqref{vert7} to show that
\beq{vbn1}
\Cone\;(u)\text{ is a vector bundle of rank }n_1+n_2\text{ over }U.\footnote{Hence the K-theory class $\mathsf{CO}^{[n_1,n_2]}_\b$ \eqref{COdef} is represented by a bundle after pullback to an affine bundle. By \eqref{J=} this gives a more conceptual proof of the vanishing $c_{n_1+n_2+i}(\mathsf{CO}^{[n_1,n_2]}_\b)=0\ \forall\,i>0$ of \cite[Corollary 8.11]{GT1}. As Aravind Asok kindly explained to us, it is possible that any bundle on the affine bundle is automatically a pullback from the base; this would prove $\mathsf{CO}^{[n_1,n_2]}_\b$ is represented by a bundle on $\S{n_1}\times \S{n_2}\times S_\b$.}
\eeq
Next we describe a nice form of these complexes $\tau(\cF)$ on the open set $U$.

\begin{lem}\label{real}
Over $U$ each of the terms $\tau(\cF)$ in \eqref{vert9} has a 2-term resolution by vector bundles
\beq{rep}
\tau(\cF)(-1)\ \simeq\,\big\{B\To F\big\}
\eeq
which, on dualising, induces the surjection $B^*\to\hspace{-3mm}\to h^0\big((\tau(\cF)(-1))^\vee\big)$ of \eqref{Bsuj}. Moreover the arrows in \eqref{vert9} may be taken to be genuine maps of complexes between these resolutions.
\end{lem}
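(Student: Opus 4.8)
The plan is to apply, over the affine scheme $U$, the standard device for rewriting a complex concentrated in degrees $[0,1]$ as a two-term complex of vector bundles whose degree-$0$ term is a \emph{prescribed} bundle $B$: this is possible exactly when $B^*$ surjects onto the $0$th cohomology sheaf of the dual complex. So the lemma splits into (i) producing, for each $\cF$ occurring in \eqref{vert9}, a surjection $B^*\twoheadrightarrow h^0\big((\tau(\cF)(-1))^\vee\big)$ which on dualising gives the map \eqref{Bsuj} (or its evident analogue), and (ii) feeding this into routine homological algebra over $U$.

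\textbf{Identifying the target and producing the surjection.} For each $\cF$ in \eqref{vert9}, $\tau(\cF)(-1)$ is quasi-isomorphic to a two-term complex of vector bundles in degrees $[0,1]$, so its dual lies in degrees $[-1,0]$. As $R^2\pi_*\,\cO$ is a vector bundle pulled back from $X$, dualising and twisting the triangle $R^2\pi_*\,\cO[-2]\to\cF\to\tau(\cF)$ gives $h^0\big((\tau(\cF)(-1))^\vee\big)\cong h^0\big((\cF(-1))^\vee\big)$, where $\cF(-1)$ is one of $R\hom_\pi(\cI_1,\cI_2\otimes\cL_\b)$, $R\hom_\pi(\cI_1,\cL_\b)$, $R\pi_*(\cI_2\otimes\cL_\b)$, $R\pi_*\,\cL_\b$. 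Relative Serre duality down $\pi$ (dualising sheaf $K_S$), together with the vanishing of $R^{\ge1}\pi_*$ on the torsion part of $\cI_i^\vee$, then identifies this sheaf with $\ext^2_\pi(\cI_2,\cI_1\otimes K_S\otimes\cL_\b^*)$ in the first case and with $R^2\pi_*(\cL_\b^*\otimes K_S)$ in the remaining cases. The former is exactly the target of \eqref{Bsuj}; the latter receives the surjection $B^*\cong R^2\pi_*\big(\cL_\b^*(K_S-A)\big)\twoheadrightarrow R^2\pi_*(\cL_\b^*\otimes K_S)$ that is the first step of \eqref{Bsuj}, which is onto because $A$ is ample enough that $R^2\pi_*$ of the cokernel (a sheaf on $A$) vanishes. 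This is the step that requires care: keeping track of the various relative Serre dualities and of the twists by $\cO(1)$, and checking the surjections obtained for the different $\cF$ fit together compatibly with the morphisms in \eqref{sur} and \eqref{vert9}.

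\textbf{Building the resolution.} Given such a surjection $B^*\twoheadrightarrow h^0\big((\tau(\cF)(-1))^\vee\big)$, affineness of $U$ makes $\Ext^{>0}_U(B^*,-)$ vanish, so it lifts to a morphism $B^*\to(\tau(\cF)(-1))^\vee$ in $D^b(U)$. Representing $(\tau(\cF)(-1))^\vee$ by a two-term vector-bundle complex $\{G^{-1}\to G^0\}$ in degrees $[-1,0]$ and the lift by an honest bundle map $B^*\to G^0$, the combined map $G^{-1}\oplus B^*\to G^0$ is surjective (this is where surjectivity on $h^0$ is used), so its kernel $F^*$ is a vector bundle and the cone of $B^*\to(\tau(\cF)(-1))^\vee$ is $F^*[1]$. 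Rotating the triangle gives $(\tau(\cF)(-1))^\vee\simeq\{F^*\to B^*\}$ in degrees $[-1,0]$ with induced surjection $B^*\to\coker(F^*\to B^*)$ the given one; dualising yields $\tau(\cF)(-1)\simeq\{B\to F\}$, recovering \eqref{Bsuj} (or its analogue) on dualisation.

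\textbf{Making the arrows into chain maps.} Each resolution $\{B\to F\}$ is a bounded complex of vector bundles on the affine $U$, hence homotopy-projective, so every morphism of $D^b(U)$ between two of them --- in particular each arrow of \eqref{vert9}, and each edge of the exact triangles it sits in --- is represented by a genuine chain map (unique up to homotopy), and each triangle by an honest mapping cone. The only homotopy freedom in degree $0$ is precomposition of a map $F_1\to B$ with the differential $B\to F_1$; since the degree-$0$ terms are all the single bundle $B$ and the surjections of the first step are compatible along \eqref{sur} and \eqref{vert9}, this freedom suffices to normalise each chain map so that its degree-$0$ component is the identity $B\to B$, which is the form in which Proposition \ref{compare} and Theorem \ref{Gee} are applied in the next Section. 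The homological algebra of the last two steps is routine once the compatible surjections out of $B^*$ are in hand, so that first step is the crux.
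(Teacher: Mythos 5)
Your approach is genuinely different from the paper's, and it is largely sound, but let me flag where it diverges and where it is thinner than the paper's argument.

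The paper does not build the three resolutions independently and then reconcile them. It starts downstairs on $X$ with a single \emph{three}-term resolution $R\pi_*\cL_\b\simeq\{B\to G_1\to G_2\}$, obtained by observing that $s_A\colon R\pi_*\cL_\b\to B$ is injective on $h^0$ fibrewise (so its cone lives in degrees $1,2$); the compatibility with \eqref{Bsuj} is then automatic. It then pulls back to $\PP(B)$, twists, and absorbs $R^2\pi_*\cO$ into $G_1$ to get $\tau(R\pi_*\cL_\b(1))\simeq\{B(1)\to G(1)\}$ over $U$. The resolutions of the other two $\tau(\cF)$'s are produced \emph{from this one}, using the exact triangle $\tau(\ldots)\xrightarrow{u}\{B(1)\to G(1)\}\to\Cone(u)$ and the fact (\eqref{vbn1}) that $\Cone(u)$ is a vector bundle, giving $\{B(1)\to\Cone(u)\oplus G(1)\}$. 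This incremental construction buys two things for free: the surjections in \eqref{Bsuj} are tautologically those induced on $h^0$, and the arrows of \eqref{vert9} are \emph{literal} projections/inclusions between the degree-$1$ terms, with the identity in degree $0$. The latter is not part of the Lemma's statement but is exactly what makes the next step, $\wt u\circ s=t$ in the proof of Theorem \ref{33}, immediate.

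Your route instead constructs each $\{B\to F\}$ in parallel: identify $h^0((\tau(\cF)(-1))^\vee)$ via the triangle and relative Serre duality, use affineness of $U$ to lift the surjection $B^*\twoheadrightarrow h^0$ to a derived-category morphism, and build the two-term resolution as the kernel of $G^{-1}\oplus B^*\twoheadrightarrow G^0$. This is a clean and valid alternative, and your use of homotopy-projectivity over the affine $U$ does establish the ``genuine maps of complexes'' clause. Two caveats, both of which you partly flag but do not close. First, you correctly identify the compatibility of the three surjections out of $B^*$ with the maps in \eqref{sur}/\eqref{vert9} as the crux, but you do not carry it out; in the paper's construction there is nothing to check because the resolutions are built out of one another. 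Second, your final ``normalisation'' claim --- that the homotopy freedom (adding $h\circ d$ to the degree-$0$ component, $h\colon F_1\to B$) suffices to make the degree-$0$ component of each chain map the identity on $B$ --- is asserted but not argued: you would need to know $\mathrm{id}_B-\alpha$ lies in the image of $\Hom(F_1,B)\xrightarrow{\,-\circ d\,}\End(B)$, and that does not follow from compatibility on $h^0$ alone without further input. This is harmless for the Lemma as stated, but it is precisely the feature the paper's construction gives for free and that Theorem \ref{33} relies on, so the paper's more explicit, inductive route is not just a stylistic choice.
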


\begin{proof}
The final claim follows from the fact that on any affine variety like $U$, maps in the derived category between complexes of locally free sheaves are quasi-isomorphic to genuine maps of complexes.

We start by working downstairs on $X$. The composition
\beq{com}
R\pi_*\;\cL_\b\rt{s_A} R\pi_*\;\cL_\b(A)\,=\,\pi_*\;\cL_\b(A)\,=\,B
\eeq
induces the inclusion $H^0(L)\into H^0(L(A))$ on restricting to 
any point $x=(I_1,I_2,L)\in X$ and taking $h^0$. Therefore its cone has cohomology in degrees $1,2$ after basechange to any point, so it can be represented by a 2-term complex of vector bundles $G_1\to G_2$, meaning we can write
$$
R\pi_*\;\cL_\b\ \cong\ \big\{B\To G_1\To G_2\big\}.
$$
By construction this resolution dualises to induce the surjection $B^*\to\hspace{-3mm}\to h^0((R\pi_*\;\cL_\b)^\vee)$ of \eqref{Bsuj}.

Now pullback to (the affine bundle over) $\PP(B)/X$, twist by $\cO_{\PP(B)}(1)$ and use the map $R^2\pi_*\;\cO[-2]\to R\pi_*\;\cL_\b(1)$ of \eqref{ddgg} (which we already noted extends to the affine bundle over $\PP(B)$). It can be represented by a genuine map of complexes, so we can represent its cone as
$$
\tau\big(R\pi_*\;\cL_\b(1)\big)\,\cong\,\big\{B(1)\to G(1)\big\},\ \quad
G:=\ker\big[G_1\oplus(R^2\pi_*\;\cO)(-1)\to\hspace{-3mm}\to G_2\big],
$$
over the affine open $U$ on which the second arrow is a surjection. This gives the result for $\cF=R\pi_*\;\cL_\b(1)$.

For $\cF=\tau\big(R\hom_\pi(\cI_1,\cI_2\otimes\cL_\b(1))\big)$ we can now write the exact triangle defined by $u$ \eqref{vert9} as
$$
\tau\big(R\hom_\pi(\cI_1,\cI_2\otimes\cL_\b(1))\big)\rt{u}\big\{B(1)\to
G(1)\big\}\To\Cone(u)
$$
with the last arrow a genuine map of complexes of vector bundles on  $U$. Thus the first term can be written as the 2-term complex of vector bundles
$$
\tau\big(R\hom_\pi(\cI_1,\cI_2\otimes\cL_\b(1))\big)\ \cong\ \big\{
B(1)\To\Cone(u)\oplus G(1)\big\}.
$$

A similar argument applies to $\tau\big(R\hom_\pi(\cI_1,\cL_\b(1))\big)$.
\end{proof}

So we can now see (the affine bundle over) $S^{[n_1,n_2]}_\b$ as the zeros in $U$ of the section of $F(1)$ defined by the composition
\beq{blar}
\cO_{\PP(B)}(-1)\Into B\To F
\eeq
where $\{B\to F\}$ is the representative \eqref{rep} of $\tau\big(R\hom_\pi(\cI_1,\cI_2\otimes\cL_\b)\big)$. For the confused reader we summarise the construction. \medskip

\noindent\textbf{Summary.}
By Proposition \ref{props}, $S^{[n_1,n_2]}_\b$ is the virtual resolution $\wt D_1$ of the $r=1$ degeneracy locus of the complex $R\hom_\pi(\cI_1,\cI_2\otimes\cL_\b)$ over $X$. Since this is 3-term $\{B\to G_1\to G_2\}$ in general this does not give it a natural virtual cycle, but it does express it as the zero locus in $q\colon\PP(B)\to X$ of the composition\footnote{Changing $G_1$, for instance to $\ker\;(G_1\to G_2)$ when this is locally free, gives the same zero locus but a different perfect obstruction theory.}
\beq{32term}
\cO_{\PP(B)}(-1)\Into q^*B\To q^*G_1.
\eeq
We cannot modify $R\hom_\pi(\cI_1,\cI_2\otimes\cL_\b)$ by an $R^2\pi_*\;\cO$ term on $X$ to get a 2-term degeneracy locus construction directly. But we \emph{can} after pulling back to $\PP(B)$ and twisting by $\cO(1)$, replacing \eqref{32term} by
\beq{22term}
\cO_{\PP(B)}(-1)\Into q^*B\To\ker\big[q^*G_1\oplus(R^2\pi_*\;\cO)(-1)\to\hspace{-3mm}\to q^*G_2\big]
\eeq
on the open set $U$ where the last arrow is a surjection. This is \eqref{blar}.

\begin{thm} \label{33}
$S^{[n_1,n_2]}_\b$ is the zero locus of \eqref{blar}. The virtual cycle of the resulting perfect obstruction theory agrees with the non-reduced virtual cycles of \cite{GSY1,TT1} whenever they are defined.\footnote{The Vafa-Witten moduli space of \cite{TT1} depends on a stability condition. But even in the unstable case, where we get a stack with possibly nontrivial stabilisers, we can pass to the $\C^*$-fixed locus and cut down the automorphisms to those which commute with the $\C^*$ action. The nested Hilbert scheme then appears as a connected component, with the $\C^*$-fixed part of the Vafa-Witten obstruction theory \cite{TT1} defining a perfect obstruction theory on it which recovers those of \cite{GSY1} and Theorem \ref{33}.} It satisfies
$$
\iota_*\big[\S{n_1,n_2}_\b\big]^{\vir}\=c_{n_1+n_2}\big(\mathsf{CO}_\beta^{[n_1,n_2]}\big) \cap\big[\S{n_1}\times \S{n_2}\big] \times  \big[S_\b\big]^{\vir}
$$
under push forward by $\iota \colon \S{n_1,n_2}_\b \Into\S{n_1}\times \S{n_2}\times S_\b$.
\end{thm}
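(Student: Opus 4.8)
The statement packages three things: that \eqref{blar} cuts out $\S{n_1,n_2}_\b$, that the resulting virtual cycle is the Vafa--Witten one, and the comparison formula. My plan is to dispatch these in turn, drawing on the constructions of Section~\ref{preaff}, Proposition~\ref{vcb}, Theorem~\ref{Gee} and Siebert's formula \cite[Theorem 4.6]{S}. The first statement is essentially a recapitulation of the Summary of Section~\ref{preaff}: Proposition~\ref{props} identifies $\S{n_1,n_2}_\b$ with $\wt D_1$ of the $3$-term complex $R\hom_\pi(\cI_1,\cI_2\otimes\cL_\b)=\{B\to G_1\to G_2\}$ over $X$, hence (Proposition~\ref{pcut}) with the zero locus in $\PP(B)$ of \eqref{32term}, and over $U$ the $\tau$-modification \eqref{22term}$=$\eqref{blar} only alters the degree-$1$ term, leaving the degree-$0$ term $B$ and the kernel $h^0\subseteq B$ — and hence the zero locus — unchanged. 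So I would just record that $\S{n_1,n_2}_\b$ is the zero scheme of the section \eqref{blar} of $\cO_{\PP(B)}(1)\otimes F$ over $U$, with the standard perfect obstruction theory of that zero locus.

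For the second statement I would compute the virtual tangent bundle of that obstruction theory. Writing $T^{\vir}=T_{\PP(B)}-\cO_{\PP(B)}(1)\otimes F$ and expanding $[F]$ via $0\to F\to q^*G_1\oplus(R^2\pi_*\cO)(-1)\to q^*G_2\to0$ together with the relative Euler sequence for $T_{\PP(B)/X}$, the two $\cO_{\PP(B)}(1)\otimes q^*B$ contributions cancel and one is left, over $\S{n_1,n_2}_\b$, with
$$
T_X-\cO+R\hom_\pi(\cI_1,\cI_2\otimes\cL_\b(1))-R^2\pi_*\cO,
$$
i.e.\ the reduced virtual tangent bundle \eqref{VW-h2} minus $R^2\pi_*\cO=H^2(\cO_S)\otimes\cO$. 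Since the reduced obstruction theory differs from the Vafa--Witten one precisely by this copy of $H^2(\cO_S)$ (the footnote to Theorem~\ref{robs}, consistent with the dimension drop from \eqref{redvc} to \eqref{virclass}), the virtual tangent bundles of the obstruction theory of \eqref{blar} and of \cite{TT1,GSY1} agree in K-theory, so Siebert's theorem gives equality of the virtual cycles. The $n_1=n_2=0$ case of this records that $[S_\b]^{\vir}$ is the virtual cycle of the analogous construction built from $\tau\big(R\pi_*\cL_\b(1)\big)(-1)$ over $\Pic_\b(S)$.

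For the comparison formula I would apply Theorem~\ref{Gee} on $U$ to the morphism $u$ of \eqref{vert9}, which by Lemma~\ref{real} is realised as a genuine map of $2$-term vector-bundle complexes between the resolutions $\tau\big(R\hom_\pi(\cI_1,\cI_2\otimes\cL_\b(1))\big)(-1)$ and $\tau\big(R\pi_*\cL_\b(1)\big)(-1)$; the former cuts out $\S{n_1,n_2}_\b$ (first step), the latter cuts out $\S{n_1}\times\S{n_2}\times S_\b$, which — being the $n_1=n_2=0$ construction pulled back from $\Pic_\b(S)$ along the smooth map $X\to\Pic_\b(S)$ times the smooth factor $\S{n_1}\times\S{n_2}$ — carries the virtual cycle $\big[\S{n_1}\times\S{n_2}\big]\times[S_\b]^{\vir}$ by the second step and flat base change. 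By \eqref{vbn1} the cone $G$ of $u$ is a rank $n_1+n_2$ vector bundle; cancelling the $R^2\pi_*\cO$ and the $\cO_{\PP(B)}(\pm1)$ twists gives $\cU^*\otimes G\cong R\pi_*\cL_\b(1)-R\hom_\pi(\cI_1,\cI_2\otimes\cL_\b(1))$, which restricts over $\S{n_1}\times\S{n_2}\times S_\b$ — using $\cL_\b(1)\cong\cO(\cD_\b)$ from \eqref{univs} — to the Carlsson--Okounkov class $\mathsf{CO}^{[n_1,n_2]}_\b$ of \eqref{COdef}. The first bullet of Theorem~\ref{Gee} with $r=1$ then produces the natural (forgetful) embedding $\iota\colon\S{n_1,n_2}_\b\into\S{n_1}\times\S{n_2}\times S_\b$ with $\iota_*\big[\S{n_1,n_2}_\b\big]^{\vir}=c_{n_1+n_2}\big(\mathsf{CO}^{[n_1,n_2]}_\b\big)\cap\big(\big[\S{n_1}\times\S{n_2}\big]\times[S_\b]^{\vir}\big)$; finally \eqref{J=} transports this back down from the affine bundle.

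Most of the genuinely new work — building $u$, proving its cone is a rank $n_1+n_2$ bundle, and producing the $2$-term resolutions over the affine open $U\subseteq\PP(B)$ — is already done in Section~\ref{preaff} and Lemmas~\ref{vbG} and \ref{real}, so the argument is largely an assembly job. The step I expect to be the main obstacle is the bookkeeping in the last paragraph: keeping track of the Jouanolou affine bundles and the shrinking of $U$, verifying that $\wt D_1$ of the $\tau\big(R\pi_*\cL_\b(1)\big)(-1)$-complex really is $\S{n_1}\times\S{n_2}\times S_\b$ with $[S_\b]^{\vir}$ in the Vafa--Witten normalisation, and getting the $\cO_{\PP(B)}(\pm1)$-twists right in the identification $\cU^*\otimes G\cong\mathsf{CO}^{[n_1,n_2]}_\b$.
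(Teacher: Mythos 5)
Your overall route matches the paper's: the identification of the zero locus, the K-theoretic matching of virtual tangent bundles together with Siebert's theorem, and a Chern-class comparison built on the map $u$ of \eqref{vert9} whose cone is the rank-$(n_1+n_2)$ bundle representing $\mathsf{CO}^{[n_1,n_2]}_\b$. But the final step has a genuine gap. You invoke Theorem~\ref{Gee} over the base $U$, applied to the $2$-term complexes $\{B\to F\}$ and $\{B\to G\}$ (the resolutions from Lemma~\ref{real}), and read off an embedding $\wt D_1(\{B\to F\})\into\wt D_1(\{B\to G\})$ identified with $\iota\colon\S{n_1,n_2}_\b\into\S{n_1}\times\S{n_2}\times S_\b$. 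That identification is false. The $\wt D_1$ of a $2$-term complex over $U$ is, by definition \eqref{Grdef}, the relative Grassmannian $\Gr(\coker\s^*,1)$ \emph{over} $U$ --- a new projectivisation mapping down to $U$ --- whereas $\S{n_1,n_2}_\b$ and $\S{n_1}\times\S{n_2}\times S_\b$ are zero loci \emph{inside} $U$, cut out by the sections $s\in\Gamma(F(1))$ and $t\in\Gamma(G(1))$ defined by the \emph{specific} line subbundle $\cO_{\PP(B)}(-1)\subset B$. These are different constructions: the virtual dimension of $\wt D_1(\{B\to F\})$ is $\dim U-(\rk F-\rk B+1)$ by Proposition~\ref{vcb}, while the zero locus of $s$ has virtual dimension $\dim U-\rk F$, differing by $\rk B-1$. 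So Theorem~\ref{Gee} as stated does not produce the formula you want, even though its hypotheses formally hold.

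The fix is exactly what the paper does: bypass Theorem~\ref{Gee} and go straight to its engine, Lemma~\ref{fulton}, applied to the zero-locus data on $U$. One has $[\S{n_1,n_2}_\b]^{\vir}=s^![0_{F(1)}]$ and $[\S{n_1}\times\S{n_2}]\times[S_\b]^{\vir}=t^![0_{G(1)}]$ (the latter by the $n_1=n_2=0$ case and flat base change, which you note correctly); the map $u$ of \eqref{vert9} is realised via Lemma~\ref{real} as a surjection of bundles $\wt u\colon F(1)\to G(1)$ with $\wt u\circ s=t$ and kernel $\Cone(u)$ of rank $n_1+n_2$ (by \eqref{vbn1}); Lemma~\ref{fulton} then gives $\iota_*s^![0_{F(1)}]=c_{n_1+n_2}(\Cone(u))\cap t^![0_{G(1)}]$, and $\Cone(u)\cong\mathsf{CO}^{[n_1,n_2]}_\b$ on $\S{n_1}\times\S{n_2}\times S_\b$ via \eqref{univs}. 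Everything else in your argument --- the observation that the $\tau$-modification changes only $h^{\ge1}$ and hence not the zero locus, the K-theory computation of $T^{\vir}$ against \eqref{VWTvir}, the identification of $\Cone(u)$ with the Carlsson--Okounkov class, and the bookkeeping with Jouanolou affine bundles --- is in line with the paper. Note, incidentally, that you flag as a concern ``verifying that $\wt D_1$ of the $\tau(R\pi_*\cL_\b(1))(-1)$-complex really is $\S{n_1}\times\S{n_2}\times S_\b$'' --- and indeed it is not, which is precisely the point; the correct statement is that $\S{n_1}\times\S{n_2}\times S_\b$ is the zero locus of $t$ in $U$.
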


\begin{proof}
Since $\tau\big(R\hom_\pi(\cI_1,\cI_2\otimes\cL_\b)\big)$ differs from $R\hom_\pi(\cI_1,\cI_2\otimes\cL_\b)$ only in $h^{\ge1}$ (even after any basechange), $h^0$ is unaffected. In other words, the zero loci of \eqref{blar} and \eqref{32term} are the same --- and the latter is $S^{[n_1,n_2]}_\b$ by Proposition \ref{props}.

The virtual tangent bundle differs from the one in Theorem \ref{robs} by only $R^2\pi_*\;\cO[-1]$, so its K-theory class is
\beq{VWTvir}
-R\hom_{\pi}(\cI_1,\cI_1)\_0-R\hom_{\pi}(\cI_2,\cI_2)\_0\,+
R\hom_\pi\big(\cI_1,\cI_2(\cD_\b)\big)-R\pi_*\;\cO,
\eeq
agreeing with the ones from \cite{GSY1,TT1}. Since virtual classes depend only on the K-theory class of the virtual tangent bundle, this makes the virtual classes of \cite{GSY1,TT1} equal to the localised top Chern class
\beq{virte}
\big[S^{[n_1,n_2]}_\b\big]^{\vir}\=s^!\;[0_{F(1)}]\ \in\ A_{n_1+n_2+\vd(S_\b)}\big(S^{[n_1,n_2]}_\b\big)
\eeq
of the bundle $F(1)\to U$ and its section $s$ defined by \eqref{blar}.

Setting $n_1=0=n_2$ we get
$$
\big[S_\b\big]^{\vir}\=t^!\;[0_{G(1)}]\ \in\ A_{\vd(S_\b)}\big(S_\b\big),
$$
where $B\to G$ is the resolution of $\tau\big(R\pi_*\;\cL_\b\big)$ on the open subset of (the affine bundle over) $\PP(B\to\Pic_\b(S))$ which is the image of $U$ (in the affine bundle over $\PP(B\to X)$), and $t$ is the induced section of $G(1)$. Pulling back to $U$ gives
\beq{virt2}
\big[S_\b\big]^{\vir}\times S^{[n_1]}\times S^{[n_2]}\=t^!\;[0_{G(1)}],
\eeq
where as usual we have supressed the pull back map on $G(1)$ and its section $t$. Now the map $u$ of \eqref{vert9} induces, via Lemma \ref{real}, a map
$$
\wt u\,\colon\,F(1)\To G(1)
$$
such that $\wt u\circ s=t$. By \eqref{vbn1} $\wt u$ is a surjection with kernel the vector bundle $\Cone\;(u)$, so applying Lemma \ref{fulton} and using \eqref{virt2} gives
\begin{align*}
\big[S^{[n_1,n_2]}_\b\big]^{\vir} &\= s^!\;[0_{F(1)}] \\
&\= c_{n_1+n_2}(\Cone\;(u))\cap t^!\;[0_{G(1)}] \\
&\= c_{n_1+n_2}\big(\mathsf{CO}_\beta^{[n_1,n_2]}\big)\cap\big[S^{[n_1]}\times S^{[n_2]}\big]\times\big[S_\b\big]^{\vir}. \qedhere
\end{align*}
\end{proof}

\begin{rmk} We have now proved Theorem \ref{comparison} from the Introduction. When $p_g(S)>0$ and  $H^2(\cO_S(D))=0$ for all effective $D$ in class $\beta$ --- so the reduced class is defined --- the virtual cycle is automatically zero by the identity
\beq{redvir0}
\big[\S{n_1,n_2}_\b\big]^{\vir}\=c\_{p_g}(\pi_*\;\cO)\cap\big[\S{n_1,n_2}_\b\big]^{\op{red}}\=0
\eeq
of Theorem \ref{Gee}.
\end{rmk}

\section{$\ell$-step nested Hilbert schemes.}\label{lstep}
Fix curve classes $\b_1,\dots, \b_{\ell-1}\in H^2(S,\bb Z)$ and integers $n_1,\dots, n_\ell$. As a set the $\ell$-step nested 
Hilbert scheme is
\begin{multline*} \S{n_1,\dots, n_\ell}_{\b_1,\dots, \b_{\ell-1}}:=\big\{ I_i(-D_i)\subseteq I_{i+1}\subseteq\cO_S, \ i=1,\dots, \ell-1 \ \colon\  \\ [D_i]=\beta_i,\ \mathrm{length}\;(\cO_S/I_i)=n_i\big\}.
\end{multline*}
As a scheme it represents the functor described in \eqref{hilbdef} (with $D_\ell=0$). That description immediately implies that it can be written as the fibre product
$$
\S{n_1,\dots, n_\ell}_{\b_1,\dots, \b_{\ell-1}}\=\S{n_1,n_2}_{\b_1}\times_{\S{n_2}}\S{n_2,n_3}_{\b_2}\times_{\S{n_3}}\cdots \times_{\S{n_{\ell-1}}} \S{n_{\ell-1},n_\ell}_{\b_{\ell-1}}.
$$
It follows that $\S{n_1,\dots, n_\ell}_{\b_1,\dots, \b_{\ell-1}}$ is the intersection of the $(\ell-1)$ virtual resolutions $\wt D_1(E^i_\bu)$ of the degeneracy loci $D_1(E^i_\bu)$ of the complexes
\beq{cpxes}
E^i_\bu\,:=\,R\hom_\pi(\cI_i,\cI_{i+1}\otimes\cL_{\b_i}))\ \,\mathrm{over}\,\ X\,:=\,\prod_{j=1}^\ell\S{n_j}\times \prod_{j=1}^{\ell-1} \Pic_{\b_j}(S).\!\!
\eeq
Here $i=1,\ldots,\ell-1$ while $\pi \colon S\times X\to X$ is the projection and $\cL_{\b_i},\,\cI_i$ are (the pullbacks of) Poincar\'e line bundles and the universal ideal sheaves respectively.

Taking $A\subset S$ to be a divisor which is sufficiently positive that
$$
H^{\ge 1}(L_i(A))\,=\,0 \quad \forall \; L_i \in \Pic_{\b_i}(S),\quad i=1,\dots, \ell-1,
$$
we get vector bundles
$$
B_i\,:=\,\pi_*(\cL_{\b_i}(A)),\quad i=1,\dots, \ell-1, \quad\mathrm{over}\ X
$$
and an embedding
\beq{embeddd}
\iota\_{B}\ \colon\ \S{n_1,\dots, n_\ell}_{\b_1,\dots, \b_{\ell-1}}\Into \PP(B_1)\times\_X \cdots\times\_X\PP(B_{\ell-1})
\eeq
Suppose $H^2(\cO(D_i))=0$ for every effective $D_i$ of class $\b_i,\ i=1,\ldots,\ell-1$. Then each of the complexes \eqref{cpxes} can be represented by a 2-term complex of vector bundles $E_0^i\to E_1^i$. So we can mimic the constructions of Section \ref{2} and \ref{nHs}. We replace the projective bundles $\PP(E_0)/X$ and $\PP(B)/X$ by the fibre products $\PP(E_0^1)\times_X\times\cdots\times_X\PP(E_0^{\ell-1})$ and $\PP(B_1)\times_X\times\cdots\times_X\PP(B_{\ell-1})$, and cut out the $\ell$-step nested Hilbert scheme by the obvious section of $V_1\boxplus\cdots\boxplus V_{\ell-1}$, where $V_i=\cO_{\PP(E_0^i)}(-1)^*\otimes p_i^*(E_1^i)$. The upshot is a (multiply reduced) perfect obstruction theory
on $\S{n_1,\dots, n_\ell}_{\b_1,\dots, \b_{\ell-1}}$ with virtual tangent bundle
\begin{multline*}
-\sum_{i=1}^\ell R\hom_{\pi}(\cI_i,\cI_i)\_0+\sum_{i=1}^{\ell-1}R\hom_\pi(\cI_i,\cI_{i+1}(\cD_{\b_i}))\\ +(R^1\pi_*\cO)^{\oplus (\ell-1)}-\cO^{\oplus (\ell-1)}
\end{multline*}
in K-theory, and reduced virtual class
\beq{reddd}
\Big[\S{n_1,\dots, n_\ell}_{\b_1,\dots, \b_{\ell-1}}\Big]^{\op{red}}\ \in\ A_{n_1+n_\ell+\vd_{\b_1}+\cdots+\vd_{\b_{\ell-1}\!}+(\ell-1)p_g(S)}\Big(\S{n_1,\dots, n_\ell}_{\b_1,\dots, \b_{\ell-1}}\Big).
\eeq
\begin{thm} \label{hpush} If $H^2(L_i)=0$ for all $L_i\in\Pic_{\b_i}(S)$ and all $i=1,\ldots,\ell-1$ then the push forward of the reduced cycle \eqref{reddd} by the embedding \eqref{embeddd} is
\begin{multline*} 
\iota_{B*}\big[ \S{n_1,\dots, n_\ell}_{\b_1,\dots, \b_{\ell-1}}\big]^{\op{red}}\= \\
\prod_{i=1}^{\ell-1} c_{b_i+n_i+n_{i+1}-\chi(\cO_S)-\vd_{\b_i}}\big(B_i(1_i)-R\hom_\pi(\cI_i,\cI_{i+1}(\cc L_{\b_i}(1))\big),\end{multline*}
where $b_i:=\rk(B_i)$ and $\cO(1_i)$ denotes the pullback of $\cO_{\PP(B_i)}(1)$.
\end{thm}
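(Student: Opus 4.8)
Write $N:=\S{n_1,\dots,n_\ell}_{\b_1,\dots,\b_{\ell-1}}$. The plan is to run the proof of Theorem \ref{robs} --- that is, the $r=1$ case of Theorem \ref{bar} --- in all $\ell-1$ factors simultaneously, using that under the hypothesis everything in sight is an external direct sum over the fibre product $X$. Since $H^2(L_i)=0$ for all $L_i\in\Pic_{\b_i}(S)$, each complex $E^i_\bu$ of \eqref{cpxes} is $2$-term over all of $X$, so by the discussion before the statement $N$ embeds (call this $\iota_E$) into $\PP_E:=\PP(E_0^1)\times_X\cdots\times_X\PP(E_0^{\ell-1})$ as the zero locus of the obvious section of $\bigoplus_iV_i$, $V_i=\cO_{\PP(E_0^i)}(-1)^*\otimes p_i^*E_1^i$, and $[N]^{\op{red}}$ is the localised top Chern class of that bundle. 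Because $\bigoplus_iV_i$ is an external direct sum and the cutting section respects the decomposition, this zero locus is the fibre product over $X$ of the $\wt D_1(E^i_\bu)\subset\PP(E_0^i)$, and $\iota_{E*}[N]^{\op{red}}=\prod_ic_{e_1^i}\big(\cO_{\PP(E_0^i)}(1)\otimes p_i^*E_1^i\big)\cap[\PP_E]$ by Proposition \ref{vcb} and \cite[Section~14.1]{Fu} applied one factor at a time. Meanwhile the surjection \eqref{Bsuj} for each $i$ --- which is exactly where $H^2(L_i)=0$ for \emph{all} $L_i$ is used --- gives, via \eqref{iot2}, the embedding $\iota_B$ of \eqref{embeddd} into $\PP_B:=\PP(B_1)\times_X\cdots\times_X\PP(B_{\ell-1})$.

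Next I would set up the many-factor analogue of the diagram \eqref{dgr}: form $I:=\iota_E\times_X\iota_B\colon N\into\PP_E\times_X\PP_B$, which is the fibre product over $X$ of the $\PP(E_0^i)\times_X\PP(B_i)$, together with the section $s:=\id_N\times_X\iota_B\colon N\into N\times_X\PP_B$. Applying Lemma \ref{s*s*} in each factor --- legitimate because its proof uses only the $i$-th factor and is manifestly compatible with base change along the remaining ones --- shows $s$ is a regular embedding cut out by the section of $\bigoplus_i\cU_i^*\otimes\cQ_{B_i}$ given by the compositions $\cU_i\into B_i\twoheadrightarrow\cQ_{B_i}$, where $\cU_i=\cO_{\PP(E_0^i)}(-1)$ and $\cQ_{B_i}$ is the universal quotient bundle on $\PP(B_i)$. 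Reading every line of the proof of Theorem \ref{bar} factorwise --- flat base change around \eqref{dgr} in each factor, and the identities of \cite[Propositions~14.1 and~14.2.2]{Fu} --- then produces, after pushing the $\PP(E_0^i)\to X$ factors down one at a time,
\[
\iota_{B*}[N]^{\op{red}}\=\prod_{i=1}^{\ell-1}\Delta^1_{b_i+e_1^i-e_0^i}\big(c(\cQ_{B_i}-E^i_\bu)\big).
\]
These successive pushdowns commute since the factors are mutually independent over $X$, which is what makes the answer a product.

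Finally I would rewrite each factor into the stated shape exactly as in the proof of Theorem \ref{robs}. Since \eqref{Bsuj} holds globally for every $i$, the Jouanolou trick of Section \ref{Jtrick} lifts it to $\psi_i\colon B_i^*\to(E_0^i)^*$ over an affine bundle on $X$ (nothing is lost, by \eqref{J=}), and \eqref{leo} with $r=1$ turns $\Delta^1_{b_i+e_1^i-e_0^i}(c(\cQ_{B_i}-E^i_\bu))$ into $c_{b_i+e_1^i-e_0^i}\big(\cU_{B_i}^*\otimes(B_i-E^i_\bu)\big)$. Identifying $\cU_{B_i}^*=\cO(1_i)$, absorbing this twist into $R\hom_\pi(\cI_i,\cI_{i+1}\otimes\cL_{\b_i})$ since $\cO(1_i)$ is pulled back from $X$, and evaluating $b_i+e_1^i-e_0^i=b_i-\chi(I_i,I_{i+1}\otimes L_i)=b_i+n_i+n_{i+1}-\chi(\cO_S)-\vd_{\b_i}$ (the same $\chi$ computation as in Theorem \ref{robs}) yields the claimed product.

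The only genuine difficulty I anticipate is organizational: writing the many-factor versions of \eqref{dgr} and Lemma \ref{s*s*} carefully and verifying that the several pushforwards and refined Gysin maps on the various $\PP(E_0^i)$ and $\PP(B_i)$ commute, so that the answer factors over $i$. Once one grants that the entire apparatus of Section \ref{2} is compatible with fibre products over $X$ --- which it plainly is, since all the relevant bundles and sections are external direct sums --- the argument is bookkeeping with no new ideas.
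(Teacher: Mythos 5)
Your proposal is correct and is precisely the argument the paper intends: the paper states Theorem \ref{hpush} without proof, saying only that one should ``mimic the constructions of Section \ref{2} and \ref{nHs}'' by replacing the single projective bundles $\PP(E_0)/X$ and $\PP(B)/X$ with the fibre products over $X$ and cutting out the nested Hilbert scheme by the boxed section of $V_1\boxplus\cdots\boxplus V_{\ell-1}$. You have filled in exactly what that prescription requires: factorise the localised top Chern class, run the diagram \eqref{dgr} and Lemma \ref{s*s*} one factor at a time, push down the $\PP(E_0^i)$'s, and invoke \eqref{leo} via the Jouanolou lift $\psi_i$ for each $i$. Your numerology $b_i+e_1^i-e_0^i=b_i-\chi(L_i)+n_i+n_{i+1}=b_i+n_i+n_{i+1}-\chi(\cO_S)-\vd_{\b_i}$ is the same computation as in Theorem \ref{robs}. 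One small slip in wording: $\cO(1_i)$ is pulled back from $\PP(B_i)$ (equivalently, from the base of $\pi$), not from $X$; that is what lets the twist pass inside $R\hom_\pi$. The mathematics you describe around that point is nonetheless correct.
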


For general classes $\beta_i$ we follow the procedure of Section \ref{preaff} to modify each of the complexes \eqref{cpxes} by $R^2\pi_*\cO[-2]$ after pulling back to (the fibre product of the) $\PP(B_i)$ and tensoring by $\cO(1_i)$.
The result is a perfect obstruction theory with virtual tangent bundle
\beq{vtb2}
-\sum_{i=1}^\ell R\hom_{\pi}(\cI_i,\cI_i)\_0+\sum_{i=1}^{\ell-1} R\hom_\pi(\cI_i,\cI_{i+1}(\cD_{\b_i}))+(R\pi_*\cO)^{\oplus (\ell-1)}
\eeq
in K-theory, and virtual cycle
$$
\Big[\S{n_1,\dots, n_\ell}_{\b_1,\dots, \b_{\ell-1}}\Big]^{\vir}\ \in\  A_{n_1+n_\ell+\vd_{\b_1}+\cdots+\vd_{\b_{\ell-1}\!}}\Big(\S{n_1,\dots, n_\ell}_{\b_1,\dots, \b_{\ell-1}}\Big).
$$
Just as in \eqref{redvir0} this vanishes if $p_g(S)>0$ and $H^2(L)=0$ for all $L\in\Pic_{\b_i}(S)$ for some $i=1,\ldots,\ell-1$.

Iterating the construction of Section \ref{preaff} we get
the following comparison result, a direct generalisation of Theorem \ref{comparison}.

\begin{thm} \label{hcompar} The pushforward of $\big[\S{n_1,\dots, n_\ell}_{\b_1,\dots, \b_{\ell-1}}\big]^{\vir}$ by the embedding
$$
\S{n_1,\dots, n_\ell}_{\b_1,\dots, \b_{\ell-1}} \Into \prod_{j=1}^\ell\S{n_j}\times \prod_{j=1}^{\ell-1} S_{\b_j}
$$
is given by the formula
\begin{multline*}
\prod_{i=1}^{\ell-1}c_{n_i+n_{i+1}}\big(R \pi_*\cO(\cc D_{\b_i})-R \hom_\pi(\cI_i,\cI_{i+1}(\cc D_{\b_i}))\big)\\ \cap \big[\S{n_1}\times\cdots \times  \S{n_\ell}\big] \times  \big[S_{\b_1}\big]^{\vir}\times\cdots \times  \big[S_{\b_{\ell-1}}\big]^{\vir}.
\end{multline*}
Moreover, if $H^2(L_i)=0$ for all effective $L_i\in\Pic_{\b_i}(S),\ i=1,\ldots,\ell-1$ then the same formula holds with all virtual classes $[\ \cdot\ ]^{\vir}$ replaced by reduced classes $[\ \cdot\ ]^{\op{red}}$. \hfill \ $\square$
\end{thm}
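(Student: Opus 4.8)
The plan is to iterate the proof of Theorem \ref{33} (the case $\ell=2$). Recall from the start of Section \ref{lstep} that $\S{n_1,\dots,n_\ell}_{\b_1,\dots,\b_{\ell-1}}$ is the fibre product over the $\S{n_j}$ of the $(\ell-1)$ two-step nested Hilbert schemes $\S{n_i,n_{i+1}}_{\b_i}$, so by Proposition \ref{props} it sits inside $\PP(B_1)\times_X\cdots\times_X\PP(B_{\ell-1})$ as the common zero locus of the $(\ell-1)$ sections of the form \eqref{32term} attached to the complexes $E^i_\bu=R\hom_\pi(\cI_i,\cI_{i+1}\otimes\cL_{\b_i})$, $i=1,\dots,\ell-1$.

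First I would run the splitting construction of Section \ref{preaff} once for each index $i$, and the point to verify is that all $(\ell-1)$ of these can be performed over a \emph{single} affine open $U$ inside (an affine bundle over) the fibre product of the $\PP(B_i)$. This works because the Jouanolou trick applied to the quasi-projective variety $\PP(B_1)\times_X\cdots\times_X\PP(B_{\ell-1})$ produces one affine bundle on which each $R^2\pi_*\cO$ is a projective $\cO$-module, the surjectivity conditions of type \eqref{sur} are open, and the splittings of the relevant exact triangles exist over any affine open; one then shrinks $U$ to stay affine exactly as in the footnote to Section \ref{preaff}. By Lemma \ref{real}, over $U$ each $\tau\big(E^i_\bu(1_i)\big)$ acquires a genuine $2$-term resolution $\{B_i(1_i)\to F_i(1_i)\}$ and $\tau\big(R\pi_*\cL_{\b_i}(1_i)\big)\simeq\{B_i(1_i)\to G_i(1_i)\}$, with the comparison map $u_i$ of \eqref{vert9} represented by an honest map of complexes; this induces a surjection of vector bundles $\wt u_i\colon F_i(1_i)\twoheadrightarrow G_i(1_i)$ whose kernel is, by Lemma \ref{vbG} and \eqref{vbn1}, a rank $n_i+n_{i+1}$ bundle representing $\mathsf{CO}^{[n_i,n_{i+1}]}_{\b_i}$ of \eqref{COdef}.

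Next I would identify $\S{n_1,\dots,n_\ell}_{\b_1,\dots,\b_{\ell-1}}$ with the zero locus in $U$ of the external direct sum section $s=s_1\boxplus\cdots\boxplus s_{\ell-1}$ of $F_1(1_1)\boxplus\cdots\boxplus F_{\ell-1}(1_{\ell-1})$, and --- exactly as in \eqref{virte} --- recognise its localised top Chern class $s^![0]$ as the virtual cycle of the perfect obstruction theory of virtual tangent bundle \eqref{vtb2}, which by Siebert's formula \cite{S} agrees with the one of \cite{GSY1,TT1} since the two agree in K-theory. Setting all $n_j=0$ shows that the induced section $t=t_1\boxplus\cdots\boxplus t_{\ell-1}$ of $G_1(1_1)\boxplus\cdots\boxplus G_{\ell-1}(1_{\ell-1})$ has zero locus $\prod_j\S{n_j}\times\prod_j S_{\b_j}$ --- a genuine product, since the $S_{\b_i}$ only constrain pairwise-distinct $\Pic_{\b_i}$ factors of $X$ --- and localised top Chern class $\big[\prod_j\S{n_j}\big]\times\prod_j[S_{\b_j}]^{\vir}$. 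Since $\wt u_1\boxplus\cdots\boxplus\wt u_{\ell-1}$ is a surjection of bundles carrying $s$ to $t$ with kernel $\bigoplus_i\mathsf{CO}^{[n_i,n_{i+1}]}_{\b_i}$, applying Lemma \ref{fulton} once for each summand peels off the factor $c_{n_i+n_{i+1}}(\mathsf{CO}^{[n_i,n_{i+1}]}_{\b_i})$ and yields the claimed formula, using \eqref{univs} to rewrite each $\mathsf{CO}^{[n_i,n_{i+1}]}_{\b_i}(1_i)$ as $R\pi_*\cO(\cD_{\b_i})-R\hom_\pi(\cI_i,\cI_{i+1}(\cD_{\b_i}))$ on $S\times S_{\b_i}$. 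For the reduced statement, when $H^2(L_i)=0$ for all $L_i\in\Pic_{\b_i}(S)$ each $E^i_\bu$ is already $2$-term over $X$, so no Jouanolou splitting is needed and I would instead iterate Theorem \ref{Gee} over the exact triangles $E^i_\bu\to R\hom_\pi(\cI_i,\cL_{\b_i})\to H_i[-1]$ as in Section \ref{comp1}, getting the same formula with every $[S_{\b_j}]^{\vir}$ replaced by $[S_{\b_j}]^{\op{red}}$.

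The hard part will be the bookkeeping of the first step: arranging the $(\ell-1)$ independent applications of the splitting trick of Section \ref{preaff} --- each needing its own affine-bundle pullback, splitting of an exact triangle, and open surjectivity locus --- over one common affine $U$ inside the fibre product of the $\PP(B_i)$, and then checking that the $(\ell-1)$-fold iteration of Lemma \ref{fulton} goes through, i.e. that the intermediate zero loci that appear as one strips off successive $\mathsf{CO}$-factors are precisely the lower nested Hilbert schemes (times Hilbert schemes of points) and behave well under the refined intersection product. Everything else is formal once Theorem \ref{33} and Lemmas \ref{fulton}, \ref{real} and \ref{vbG} are in hand.
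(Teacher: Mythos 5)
Your proposal is correct and follows exactly the route the paper indicates: the paper gives Theorem \ref{hcompar} with no written proof beyond the remark that it follows by "iterating the construction of Section \ref{preaff}", and your iteration of the splitting trick over the fibre product $\PP(B_1)\times_X\cdots\times_X\PP(B_{\ell-1})$, followed by Lemma \ref{fulton} applied to the direct sum $\boxplus_i F_i(1_i)\twoheadrightarrow\boxplus_i G_i(1_i)$ and (for the reduced case) iterating Theorem \ref{Gee}, is precisely that intended argument. Two small remarks: the kernel already equals $\mathsf{CO}^{[n_i,n_{i+1}]}_{\b_i}$ in K-theory (no extra $(1_i)$ twist, as in \eqref{COdef} which is defined on $S^{[n_i]}\times S^{[n_{i+1}]}\times S_{\b_i}$ where $\cL_{\b_i}(1_i)\cong\cO(\cD_{\b_i})$), and since Lemma \ref{fulton} can be applied once to the single short exact sequence $0\to\bigoplus_iK_i\to\bigoplus_iF_i(1_i)\to\bigoplus_iG_i(1_i)\to0$ there is no need to strip off the $\mathsf{CO}$-factors one by one and worry about identifying the intermediate zero loci, so the "hard part" you flag is actually milder than you fear.
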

 
\begin{rmk} Since \eqref{vtb2} matches the virtual tangent bundle in \cite{GSY1}, the virtual class $\big[\S{n_1,\dots, n_\ell}_{\b_1,\dots, \b_{\ell-1}}\big]^{\vir}$ is the same class that arises in higher rank Vafa-Witten theory \cite{TT1} or the reduced local DT theory of $S$ \cite{GSY2}.
\end{rmk}

\section{Computations} \label{dko}
\noindent\textbf{Vafa-Witten invariants.} On a polarised surface $(S,\cO_S(1))$, Vafa-Witten theory is an enumerative sheaf-counting theory \cite{TT1}. The ``instanton contributions" compute virtual Euler numbers of moduli spaces of Gieseker stable sheaves of fixed rank $r$ and determinant $L$ on $S$, as studied by G\"ottsche-Kool \cite{GK}.

This leaves the ``monopole contributions", which are integrals over moduli spaces of chains of sheaves (of total rank $r$) with nonzero maps between them. When the individual sheaves have rank 1, we get nested ideal sheaves (all tensored by a line bundle). And when $r$ is prime and $p_g(S)>0$, a vanishing theorem \cite[Section 5.2]{TVW} implies that \emph{all} nonzero contributions come from nested Hilbert schemes. A similar result applies in the semistable case \cite{TT2} and the refined version of both theories \cite{TVW}.

The simplest case (for simplicity) is the monopole contribution to the Vafa-Witten invariants in rank 2 and fixed determinant when slope stability is the same as slope semistability. Here we get integrals over the virtual cycles of nested Hilbert schemes of pairs $I_1(-D)\subseteq I_2\subseteq\cO_S$ for any $[D]=\b$ satisfying the slope stability\footnote{Gieseker stability is slightly more subtle, with a further inequality on the $n_i=\,$length$(\cO/I_i)$ in the strictly slope semistable case. Laarakker \cite[Proposition 3.5]{La1} has proved Gieseker unstable Higgs pairs have $\iota_*[\S{n_1,n_2}_\beta]^{\vir}= 0$ so we can sum over all $\beta$, independent of stability.\label{unsta}} condition $\deg(K_S(-D))>0$. We then sum over all $\beta,\,n_1,\,n_2$  giving the same total Chern classes, and then multiply by a factor\footnote{Thanks to Ties Laarakker for pointing out the $r^{2h^1(\cO_S)}$ factor in total rank $r$. This counts the $r$-torsion line bundles we can twist by to get another Higgs pair with the same fixed determinant.} of $2^{2h^1(\cO_S)}$.

The integrand is $1/e(N^{\vir})$, where $N^{\vir}$ is the moving part of the Vafa-Witten obstruction theory and $e$ is the $\C^*$-equivariant Euler class. So by \cite[Section 8]{TT1} we want to compute the integral of
{\small $$
\frac{e(R\hom_\pi(\cI_2,\cI_1(-\cD_\b)K_S\;\t))\ e(R\hom_\pi(\cI_1(-\cD_\b),\cI_2K_S^{-1}\t^{-1}))}
{e(R\hom_\pi(\cI_1,\cI_1K_S\t)\_0)\ e(R\hom_\pi(\cI_2,\cI_2K_S\t))\ e(R\hom_\pi(\cI_2,\cI_1(-\cD_\b)K_S^2\t^2))}\hspace{-4mm}
$$}

\noindent over $\big[S_\b^{[n_1,n_2]}\big]^{\vir}$. Write $\cO(\cD_\b)$ as $\cL_\b(1)$ as in \eqref{univs}. 
Let $h:=c_1(\cO(1))$ on $\PP(B)=\PP(\pi_*\;\cL_\b(A))$ and its subschemes $S_\b^{[n_1,n_2]}$. Then each term $e(\ \cdot\ )$ takes the form
$$
e\big((E_0-E_1)(1)\big)\=\frac{c_{r_0}(E_0(1))}{c_{r_1}(E_1(1))}\=\frac{
\sum_{i=0}^{r_0}c_{r_0-i}(E_0)\cup h^i}{\sum_{i=0}^{r_1}c_{r_1-i}(E_1)\cup h^i}
$$
where the $E_i$ have only nonzero weight spaces (so the quotient makes sense in localised equivariant cohomology) and ranks $r_i$. Therefore, expanding in powers of $h^i$ with coefficients pulled back from $S^{[n_1]}\times S^{[n_2]}\times\Pic_\b(S)$, we get an expression
$$
\sum_i\int_{\big[S_\b^{[n_1,n_2]}\big]^{\vir}}\rho^*\alpha_i\cup h^i\=
\sum_i\int_{\rho_*\big(h^i\cap\big[S_\b^{[n_1,n_2]}\big]^{\vir}\big)}\alpha_i,
$$
where $\rho$ is part of the commutative diagram
\beq{extend}
\xymatrix@R=16pt{
\S{n_1,n_2}_\b \ar[r]^-\iota\ar[dr]_(.4)\rho& S^{[n_1]}\times S^{[n_2]}\times S_\b \ar[d]_(.4){\id\times\!}^(.4){\id\times\,\mathsf{AJ}} \\
& S^{[n_1]}\times S^{[n_2]}\times\Pic_\b(S).}
\eeq
Here $\mathsf{AJ}\colon S_\b\to\Pic_\b(S)$ is the Abel-Jacobi map. So we should calculate the pushdown  $\rho_*\big(h^i\cap\big[S_\b^{[n_1,n_2]}\big]^{\vir}\big)$.
This will prove Theorems \ref{satis}, \ref{notsatis} and \ref{dualite'} from the Introduction, and compute Vafa-Witten invariants in terms of integrals over the smooth spaces $X=S^{[n_1]}\times S^{[n_2]}\times\Pic_\b(S)$.

Let
$$
\mathsf{SW}_\b\ :=\ \left\{\!\!\begin{array}{lll} \deg\,[S_\b]^{\vir} && \vd_\b=\frac12\beta.(K_S-\beta)=0, \\ 0 && \mathrm{otherwise,}\end{array}\right.
$$
denote the Seiberg-Witten invariant \cite{DKO, CK} of $S$ in class $\beta\in H^2(S,\Z)$. Things are easiest when $b^+(S)>1$, i.e. $p_g(S)>0$.

\begin{thm}\label{pg>0} Suppose $p_g(S)>0$. Then in $H_*\big(S^{[n_1]}\times S^{[n_2]}\times\Pic_\b(S)\big)$,
\begin{align*}
\rho_*\big [\S{n_1,n_2}_\b\big]^{\vir}&\=\mathsf{SW}_\beta\cdot c_{n_1+n_2}\big(\!-\!R\hom_\pi(\cI_1,\cI_2\otimes L)\big)\times[L], \\
\rho_*\Big(\;\!h^i\cap\big [\S{n_1,n_2}_\b\big]^{\vir}\Big)&\=0 \quad\mathrm{for\ }i>0,
\end{align*}
where $L\in\Pic_\b(S)$ so $[L]$ is the generator of $H_0(\Pic_\b(S),\Z)$.
\end{thm}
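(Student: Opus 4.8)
The plan is to deduce Theorem~\ref{pg>0} from the comparison formula of Theorem~\ref{comparison} (established as Theorem~\ref{33}) together with the Seiberg--Witten simple type property available when $p_g(S)>0$. Recall from \eqref{extend} that $\rho=g\circ\iota$, where $\iota\colon\S{n_1,n_2}_\b\into\S{n_1}\times\S{n_2}\times S_\b$ is the embedding of Theorem~\ref{comparison} and $g:=\id\times\id\times\mathsf{AJ}$; moreover $h=c_1\big(\cO_{\PP(B)}(1)\big)$ is pulled back along $\iota$ from the $S_\b$-factor, so $\iota_*(h^i\cap-)=h^i\cap\iota_*(-)$. By Theorem~\ref{33},
$$
\iota_*\big[\S{n_1,n_2}_\b\big]^{\vir}\=c_{n_1+n_2}\big(\mathsf{CO}_\beta^{[n_1,n_2]}\big)\cap\big(\big[\S{n_1}\times\S{n_2}\big]\times\big[S_\b\big]^{\vir}\big),
$$
so both asserted identities follow once one computes the image under $g_*$ of $c_{n_1+n_2}(\mathsf{CO}_\beta^{[n_1,n_2]})\cap\big(h^i\cap(1\times[S_\b]^{\vir})\big)$ in homology, where $1:=[\S{n_1}\times\S{n_2}]$.

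First I would dispose of the cases $\vd_\b\neq0$. Since $p_g(S)>0$ we have $b^+(S)>1$, so $S$ is of Seiberg--Witten simple type and $\big[S_\b\big]^{\vir}=0$ whenever $\vd_\b\neq0$: for $\vd_\b<0$ this is automatic, while for $\vd_\b>0$ with $\b^\vee:=K_S-\b$ not effective it is \eqref{redvir0}, since there $\big[S_\b\big]^{\vir}=c_{p_g}(R^2\pi_*\cO)\cap\big[S_\b\big]^{\op{red}}=0$ because $R^2\pi_*\cO$ is trivial of rank $p_g>0$. Then the left-hand sides of both identities vanish by the displayed comparison formula, while the right-hand sides vanish since $\mathsf{SW}_\b=0$ by definition. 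So we may assume $\vd_\b=0$, whence $\big[S_\b\big]^{\vir}=\sum_k m_k[s_k]\in A_0(S_\b)$ is a zero-cycle with $\sum_k m_k=\mathsf{SW}_\b$ and $\mathsf{AJ}_*\big[S_\b\big]^{\vir}=\mathsf{SW}_\b[L]$ in $H_0(\Pic_\b(S))$.

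Each $s_k$ corresponds to an effective divisor $D_k$ with $\cO(D_k)=L_k:=\mathsf{AJ}(s_k)\in\Pic_\b(S)$, so restricting $\mathsf{CO}_\beta^{[n_1,n_2]}=R\pi_*\cO(\cD_\b)-R\hom_\pi(\cI_1,\cI_2(\cD_\b))$ to the fibre $\S{n_1}\times\S{n_2}\times\{s_k\}$ gives $H^\bullet(L_k)\otimes\cO-R\hom_\pi(\cI_1,\cI_2\otimes L_k)$, whose total Chern class equals $c\big(\!-R\hom_\pi(\cI_1,\cI_2\otimes L_k)\big)$ since the first summand is a complex of trivial vector bundles. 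As $\Pic_\b(S)$ is connected, the class $c:=c_{n_1+n_2}\big(\!-R\hom_\pi(\cI_1,\cI_2\otimes L)\big)\in H^*(\S{n_1}\times\S{n_2})$ is independent of $L\in\Pic_\b(S)$. Capping $c_{n_1+n_2}(\mathsf{CO}_\beta^{[n_1,n_2]})$ with $1\times[s_k]$ restricts $\mathsf{CO}$ to that fibre, so $c_{n_1+n_2}(\mathsf{CO}_\beta^{[n_1,n_2]})\cap(1\times[S_\b]^{\vir})=(c\cap1)\times[S_\b]^{\vir}$ with $c$ pulled back from $\S{n_1}\times\S{n_2}$; applying $g_*$ and using $g_*(1\times[s_k])=1\times[L_k]=1\times[L]$ gives
$$
\rho_*\big[\S{n_1,n_2}_\b\big]^{\vir}\=(c\cap1)\times\mathsf{AJ}_*\big[S_\b\big]^{\vir}\=\mathsf{SW}_\b\cdot c_{n_1+n_2}\big(\!-R\hom_\pi(\cI_1,\cI_2\otimes L)\big)\times[L],
$$
the first formula. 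For $i>0$, since $h$ lives on the $S_\b$-factor and $[S_\b]^{\vir}$ is a zero-cycle, $h^i\cap\big((c\cap1)\times[S_\b]^{\vir}\big)=(c\cap1)\times\big(h^i\cap[S_\b]^{\vir}\big)=0$, and applying $g_*$ gives the second formula.

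The one genuine input, as opposed to bookkeeping, is the simple type vanishing $\big[S_\b\big]^{\vir}=0$ for $\vd_\b>0$ when $p_g(S)>0$, and this is where I expect the work to lie for a self-contained proof. When $\b^\vee$ is not effective it drops out of the reduced-class identity \eqref{redvir0} as above; the remaining case is the simple type property of Kähler surfaces with $p_g>0$, for which I would cite \cite{DKO} (noting also that $\vd_{\b^\vee}=\vd_\b$ and the Seiberg--Witten duality $\mathsf{SW}_\b=\pm\mathsf{SW}_{\b^\vee}$ for consistency). Everything else is the comparison formula of Theorem~\ref{33} and elementary manipulation of zero-cycles.
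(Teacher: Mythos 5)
Your proposal follows the same two-stage plan as the paper: reduce to the case $\vd_\b=0$ using Seiberg--Witten simple type, then use Theorem~\ref{33} and the observation that $\cO(\cD_\b)=\cL_\b(1)$ is topologically trivial modulo $L$ over a $0$-cycle $[S_\b]^{\vir}$ so that $\mathsf{CO}_\beta^{[n_1,n_2]}$ may be replaced by $R\pi_*L - R\hom_\pi(\cI_1,\cI_2\otimes L)$, after which pushing down by $\id\times\id\times\mathsf{AJ}$ finishes.

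Where you diverge from the paper is in disposing of $\vd_\b\ne0$, and there your argument has a gap. You claim that ``$S$ is of Seiberg--Witten simple type and $[S_\b]^{\vir}=0$ whenever $\vd_\b\ne0$''. But simple type is a statement about \emph{basic classes}: it says that if $\mathsf{AJ}_*[S_\b]^{\vir}\ne0$ then $\vd_\b=0$ \cite[Prop.\ 3.20]{DKO}. It does \emph{not} assert that the virtual cycle $[S_\b]^{\vir}\in A_{\vd_\b}(S_\b)$ vanishes whenever $\vd_\b\ne0$. You cover the sub-case where $\b^\vee$ is not effective via \eqref{redvir0}, which is fine, but the remaining sub-case $\vd_\b>0$ with $\b^\vee$ effective is left as a citation to DKO ``for the simple type property'', which does not deliver the claimed vanishing $[S_\b]^{\vir}=0$. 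The paper sidesteps this entirely: it observes that if $\mathsf{AJ}_*[S_\b]^{\vir}=0$ then $\mathsf{SW}_\b=0$ and both sides of the asserted identity vanish via the comparison result, whereas if $\mathsf{AJ}_*[S_\b]^{\vir}\ne0$ then $\b$ is a basic class \cite[Def.\ 3.19]{DKO}, hence $\vd_\b=0$ by simple type. This reduces immediately to the main case without ever needing the vanishing of $[S_\b]^{\vir}$ itself. The remainder of your computation for $\vd_\b=0$ --- restricting $\mathsf{CO}$ to the points $s_k$ of the zero-cycle, observing independence from $L$ by connectedness of $\Pic_\b(S)$, and using $h^i\cap[S_\b]^{\vir}=0$ for $i>0$ --- is correct and essentially identical to the paper's.
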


\begin{proof}
If $\,\mathsf{AJ}_*[S_\b]^{\vir}=0$ then $\mathsf{SW}_\b=0$, so by our comparison result Theorem \ref{33} (or Theorem \ref{comparison} in the Introduction) both sides of the claimed identity are zero.
So we assume that $\mathsf{AJ}_*[S_\b]^{\vir}\ne0$.

By \cite[Definition 3.19]{DKO} this means that $\beta$ is a \emph{basic class}. And by \cite[Proposition 3.20]{DKO} $S$ is of \emph{simple type}, which implies that $\vd_\b=0$.


Thus $[S_\b]^{\vir}$ is a 0-dimensional class. Firstly, this implies $h^i|_{[S_\b]^{\vir}}=0$ for $i>0$, and so the second claimed result. And secondly, on restriction to
$$
S\times\S{n_1}\times\S{n_2}\times\big[S_\b\big]^{\vir}
$$
the line bundle $\cL_\b(1)=\cO(\cD_\b)$ is topologically equivalent to (the pull back of) $L$ for any $L\in\Pic_\b(S)$. In particular in the formula of Theorem \ref{33},
\beq{form0}
\iota_*\big[\S{n_1,n_2}_\b\big]^{\vir}\=c_{n_1+n_2}\big(\mathsf{CO}_\beta^{[n_1,n_2]}\big) \cap\big[\S{n_1}\times \S{n_2}\big] \times  \big[S_\b\big]^{\vir}
\eeq
we may replace $\mathsf{CO}^{[n_1,n_2]}_\b$ \eqref{COdef} by $R\pi_*\;L-R\hom_\pi(\cI_1,\cI_2\otimes L)$. Applying $(\id\times\id\times\,\mathsf{AJ})_*$ then gives
\[
\rho_{*}\big [\S{n_1,n_2}_\b\big]^{\vir}\=c_{n_1+n_2}\big(\!-R\hom_\pi(\cI_1,\cI_2\otimes L)\big)\times\deg\big [S_\b\big]^{\vir}\,[\;\mathrm{point}\;],
\]
where $[\;$point$\;]=[L]$ is the generator of $H_0(\Pic_\b(S),\Z)$.
\end{proof}

Now we suppose $b^+(S)=1$, i.e. $p_g(S)=0$. Then $S$ may not be simple type, so we have to consider higher dimensional Seiberg-Witten moduli spaces and the higher Seiberg-Witten invariants \cite{DKO}
$$
\mathsf{SW}_\b^{\;j}\ :=\ \mathsf{AJ}_*\big(h^j \; \cap \; [S_\b]^{\vir}\big)\ \in\ H_{2(\vd_\b-j)}(\Pic_\b(S))\=\wwedge^{\!\;2(\vd_\b-j)}H^1(S).
$$

\begin{thm}\label{pgzero} If $p_g(S)=0$ then in $H_{2(n_1+n_2+\vd_\b-i)}(S^{[n_1]}\times S^{[n_2]}\times\Pic_\b(S))$,
\begin{multline*}
\rho_*\big(h^i\cap\big[\S{n_1,n_2}_\b\big]^{\vir}\big)\= \\
\sum_{j=0}^{n_1+n_2}c_{n_1+n_2-j}\big(R\pi_*\;\cL_\b-R\hom_\pi(\cI_1,\cI_2\otimes\cc L_\b)\big)\cup\mathsf{SW}_\b^{\;i+j}.\qedhere
\end{multline*}
\end{thm}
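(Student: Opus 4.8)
The plan is to deduce the formula from the comparison theorem (Theorem~\ref{33}) together with an elementary expansion of the Carlsson--Okounkov class in powers of $h$. Recall that Theorem~\ref{33} gives, under $\iota\colon\S{n_1,n_2}_\b\into\S{n_1}\times\S{n_2}\times S_\b$,
$$
\iota_*\big[\S{n_1,n_2}_\b\big]^{\vir}\=c_{n_1+n_2}\big(\mathsf{CO}_\b^{[n_1,n_2]}\big)\cap\big[\S{n_1}\times\S{n_2}\big]\times\big[S_\b\big]^{\vir},
$$
and that $\rho=(\id\times\id\times\mathsf{AJ})\circ\iota$ by \eqref{extend}. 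First I would record two compatibilities needed to move classes across these maps: (i) the class $h=c_1(\cO_{\PP(B)}(1))$ on $\S{n_1,n_2}_\b$ equals the $\iota$-pullback of the hyperplane class $h_{S_\b}$ of $S_\b\subset\PP(B)$ used to define $\mathsf{SW}_\b^{\,\bullet}$ --- immediate from Proposition~\ref{props}, since the tautological line $\cU=\cO(-1)$ on $\S{n_1,n_2}_\b$ is pulled back from $S_\b$; and (ii) by base change along $\mathsf{AJ}$, both $R\pi_*\,\cL_\b$ and $R\hom_\pi(\cI_1,\cI_2\otimes\cL_\b)$ on $\S{n_1}\times\S{n_2}\times S_\b$ are pulled back from $Y:=\S{n_1}\times\S{n_2}\times\Pic_\b(S)$.

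Next I would rewrite $\mathsf{CO}_\b^{[n_1,n_2]}$ \eqref{COdef}. Using $\cO(\cD_\b)\cong\cL_\b\otimes\pi^*\cO_{\PP(B)}(1)$ from \eqref{univs} and the projection formula, the line bundle $\cO(1)$ factors out of each term of $R\pi_*\,\cO(\cD_\b)-R\hom_\pi(\cI_1,\cI_2(\cD_\b))$, giving
$$
\mathsf{CO}_\b^{[n_1,n_2]}\=\mathsf V\otimes\cO(1),\qquad \mathsf V:=R\pi_*\,\cL_\b-R\hom_\pi(\cI_1,\cI_2\otimes\cL_\b),
$$
with $\mathsf V$ pulled back from $Y$. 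A Hirzebruch--Riemann--Roch count gives $\rk\mathsf V=\chi(L)-(\chi(L)-n_1-n_2)=n_1+n_2$ (alternatively, $\mathsf{CO}_\b^{[n_1,n_2]}=\Cone(u)(1)$ is a rank $n_1+n_2$ bundle by Lemma~\ref{vbG}), so $c_{n_1+n_2}$ is its top Chern class and the tensor formula for top Chern classes yields
$$
c_{n_1+n_2}\big(\mathsf{CO}_\b^{[n_1,n_2]}\big)\=\sum_{j=0}^{n_1+n_2}c_{n_1+n_2-j}(\mathsf V)\cup h^{j}.
$$

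Substituting into the displayed comparison formula, capping with $h^i$, and using (i) to write $h^{i+j}=\iota^*h_{S_\b}^{i+j}$ and the projection formula termwise (legitimate since $h_{S_\b}$ is pulled back from the $S_\b$ factor and $c_{n_1+n_2-j}(\mathsf V)$ from $Y$), I would obtain
$$
\iota_*\big(h^i\cap\big[\S{n_1,n_2}_\b\big]^{\vir}\big)\=\sum_{j=0}^{n_1+n_2}c_{n_1+n_2-j}(\mathsf V)\cap\Big(\big[\S{n_1}\times\S{n_2}\big]\times\big(h_{S_\b}^{i+j}\cap[S_\b]^{\vir}\big)\Big).
$$
Applying $(\id\times\id\times\mathsf{AJ})_*$, using (ii) to pull $c_{n_1+n_2-j}(\mathsf V)$ out of the pushforward, and recognising $\mathsf{AJ}_*\big(h_{S_\b}^{i+j}\cap[S_\b]^{\vir}\big)=\mathsf{SW}_\b^{\,i+j}$ by definition, gives exactly the stated identity; a short dimension count (using $\dim_\C\S{n}=2n$ and $\dim[S_\b]^{\vir}=\vd_\b$) confirms the target is $H_{2(n_1+n_2+\vd_\b-i)}(Y)$. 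Note that this argument does not use $p_g(S)=0$; that hypothesis only separates this statement from the sharper Theorem~\ref{pg>0}, which one recovers by observing that $[S_\b]^{\vir}$ is then $0$-dimensional, so $h_{S_\b}^{j}\cap[S_\b]^{\vir}=0$ for $j>0$ and $\mathsf{SW}_\b^{\,0}=\mathsf{SW}_\b\cdot[L]$.

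The only genuinely delicate point is the bookkeeping of Künneth factors: checking that each class pushed past a morphism really is a pullback from the correct factor ($h$ from $S_\b$, the $c_{n_1+n_2-j}(\mathsf V)$ from $Y$ via base change along $\mathsf{AJ}$), and that the ``$\cup$'' in the statement denotes the cap product on $Y$ paired against $[\S{n_1}\times\S{n_2}]\times\mathsf{SW}_\b^{\,i+j}$. This has to be set up carefully for the two projection-formula steps to be valid, but beyond it the proof is just the comparison theorem and the top-Chern-class expansion.
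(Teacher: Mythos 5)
Your proof is correct and follows essentially the same route as the paper: apply the comparison formula of Theorem~\ref{33}, expand $c_{n_1+n_2}(\mathsf{CO}^{[n_1,n_2]}_\b)$ in powers of $h$ (the paper cites Manivel's extension of the top-Chern-class tensor formula to rank $n_1+n_2$ complexes, but also notes in a footnote --- as you do --- that $\mathsf{CO}^{[n_1,n_2]}_\b$ is in fact represented by a rank $n_1+n_2$ bundle by \eqref{vbn1}/Lemma~\ref{vbG}), then push forward by $(\id\times\id\times\mathsf{AJ})_*$ and recognise the $\mathsf{SW}^{\;i+j}_\b$ terms. The extra care you take with the K\"unneth bookkeeping and the observation that $p_g(S)=0$ is not strictly needed are both consistent with the paper.
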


\begin{proof}
We again use our comparison formula \eqref{form0}. By \cite[Proposition 1]{Ma}\footnote{Manivel extends the formula $c_r(E(1))=\sum_{j=0}^rc_{r-j}(E)\cap c_1(\cO(1))^j$ from rank $r$ bundles to rank $r$ perfect complexes. In fact we showed in \eqref{vbn1} that $\mathsf{CO}^{[n_1,n_2]}_\b$ has the K-theory class of a vector bundle on (an affine bundle over) $S^{[n_1]}\times S^{[n_2]}\times S_\b$ anyway.}
\begin{multline*}
c\_{n_1+n_2}\big(\mathsf{CO}^{[n_1,n_2]}_\b\big)\=
c\_{n_1+n_2}\big(R\pi_*\;\cL_\b(1)-R\hom_\pi(\cI_1,\cI_2\otimes\cc L_\b(1))\big)\\
=\,\sum_{j=0}^{n_1+n_2}c_{n_1+n_2-j}\big(R\pi_*\;\cL_\b-R\hom_\pi(\cI_1,\cI_2\otimes\cc L_\b)\big)\cup h^j.
\end{multline*}
So $(\id\times\id\times\,\mathsf{AJ})_*\big(h^i\,\cap$\,\eqref{form0}$\big)=\rho_*\big(h^i\cap\big[\S{n_1,n_2}_\b\big]^{\vir}\big)$ is
\[
\sum_{j=0}^{n_1+n_2}c_{n_1+n_2-j}\big(R\pi_*\;\cL_\b-R\hom_\pi(\cI_1,\cI_2\otimes\cc L_\b)\big)\cup\mathsf{AJ}_*\big(h^{i+j} \; \cap [S_\b]^{\vir}\big).\qedhere
\]
\end{proof}

We can get simpler formulae by splitting into the cases that $H^2(L)=0$ for all $L\in\Pic_\b(S)$ or not. By Serre duality this is the condition that $\beta^\vee:=K_S-\beta$ is not effective or effective, respectively.

\begin{thm} \label{MHa} Suppose $p_g(S)=0$. Fix $L\in\Pic_\b(S)$ and $i\ge0$. Then the following results hold in $H_{2(n_1+n_2-i)}(S^{[n_1]}\times S^{[n_2]}\times\Pic_\b(S))$. \smallskip

$\bullet$ If $\beta^\vee$ is effective,
$$
\rho_{*}\big(h^i \cap\big[\S{n_1,n_2}_{\b}\big]^{\vir}\big)\=\mathsf{SW}_\b\cdot c_{n_1+n_2+i}\big(\!-R\hom_\pi(\cI_1,\cI_2\otimes L)\big)\times[L],
$$
and both sides vanish when $i>0$. (Cf. Theorem \ref{pg>0}.) \smallskip

$\bullet$ If $\beta^\vee$ is not effective,
$$
\rho_{*} \big(h^i \cap\big [\S{n_1,n_2}_\b\big]^{\vir}\big)\=c_{d+i}\big(\!-R\hom_\pi(\cI_1,\cI_2\otimes\cL_\b)\big),
$$
with $d=n_1+n_2+h^1(\cO_S)-\vd_\b$.
\end{thm}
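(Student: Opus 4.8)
Recall from the discussion preceding the theorem that "$\b^\vee$ effective'' is equivalent to $H^2(L)\neq0$ for some $L\in\Pic_\b(S)$, and "$\b^\vee$ not effective'' to $H^2(L)=0$ for all $L\in\Pic_\b(S)$. I would treat the second, more self-contained, bullet first. Since then the reduced theory is defined over all of $X$ and $p_g(S)=0$, the reduced and non-reduced cycles coincide, $[\S{n_1,n_2}_\b]^\vir=[\S{n_1,n_2}_\b]^{\op{red}}$ (cf.\ \eqref{redvir0}), so it suffices to push the reduced class forward. Theorem \ref{robs} gives $\iota\_{B*}[\S{n_1,n_2}_\b]^{\op{red}}=c_N\big(G(1)\big)$ with $G:=B-R\hom_\pi(\cI_1,\cI_2\otimes\cL_\b)$ and $N=n_1+n_2+d_A$, $d_A:=\chi(L(A))-\chi(L)$; by Riemann--Roch $\rk R\hom_\pi(\cI_1,\cI_2\otimes\cL_\b)=\chi(L)-n_1-n_2$, so $\rk G=N$ and $c_N(G(1))$ is a genuine top Chern class. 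The plan is then to push forward the remaining distance along $q\colon\PP(B)\to X$ (so that $\rho=q\circ\iota\_B$). Expanding $c_N(G(1))=\sum_k c_{N-k}(G)\,h^k$ and using the projective-bundle pushforward $q_*h^{\,\rk B-1+m}=s_m(B)$ with $s(B)=c(B)^{-1}$, together with $c(G)\cdot s(B)=c(G-B)=c\big(-R\hom_\pi(\cI_1,\cI_2\otimes\cL_\b)\big)$, collapses the double sum to the single class $c_{\,N-\rk B+1+i}\big(-R\hom_\pi(\cI_1,\cI_2\otimes\cL_\b)\big)$. It remains to match the index: $\rk B=\chi(L(A))$, while $p_g=0$ gives $h^1(\cO_S)=1-\chi(\cO_S)$ and $\vd_\b=\chi(L)-\chi(\cO_S)$, so $N-\rk B+1=n_1+n_2+1-\chi(L)=n_1+n_2+h^1(\cO_S)-\vd_\b=d$, as claimed.

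For the first bullet ($\b^\vee$ effective) the reduced cycle is only defined near $\S{n_1,n_2}_\b$, so I would instead start from Theorem \ref{pgzero}, which writes $\rho_*\big(h^i\cap[\S{n_1,n_2}_\b]^\vir\big)=\sum_{j=0}^{n_1+n_2}c_{n_1+n_2-j}\big(R\pi_*\cL_\b-R\hom_\pi(\cI_1,\cI_2\otimes\cL_\b)\big)\cup\mathsf{SW}_\b^{\,i+j}$. If $\b$ itself is not effective then $S_\b=\emptyset$, hence $\S{n_1,n_2}_\b=\emptyset$ and $\mathsf{SW}_\b=0$, and both sides vanish; so I may assume $\b$ and $\b^\vee$ are both effective. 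The input I would exploit is that in this situation Seiberg--Witten theory degenerates exactly as in the simple-type case of Theorem \ref{pg>0}: by the analysis of \cite{DKO}, $[S_\b]^\vir$ vanishes unless $\vd_\b=0$, in which case it is $0$-dimensional. Therefore $\mathsf{SW}_\b^{\,j}=\mathsf{AJ}_*\big(h^j\cap[S_\b]^\vir\big)=0$ for every $j>0$ and $\mathsf{SW}_\b^{\,0}=\mathsf{SW}_\b\cdot[L]$, so the sum collapses to the $i=j=0$ term: for $i>0$ both sides are zero, and for $i=0$ one uses that $\cL_\b$ is topologically equivalent to $L$ on the $0$-dimensional $[S_\b]^\vir$, so that $R\pi_*\cL_\b$ is effectively topologically trivial there and $c_{n_1+n_2}\big(R\pi_*\cL_\b-R\hom_\pi(\cI_1,\cI_2\otimes\cL_\b)\big)$ may be replaced by $c_{n_1+n_2}\big(-R\hom_\pi(\cI_1,\cI_2\otimes L)\big)$; this gives $\mathsf{SW}_\b\cdot c_{n_1+n_2+i}\big(-R\hom_\pi(\cI_1,\cI_2\otimes L)\big)\times[L]$.

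The step I expect to be the main obstacle is exactly this last structural input --- controlling $[S_\b]^\vir$, and hence the higher invariants $\mathsf{SW}_\b^{\,j}$, when $\b^\vee$ is effective --- since it rests on Seiberg--Witten results from \cite{DKO} external to the degeneracy-locus machinery built in this paper, rather than on anything proved here. The $\b^\vee$-not-effective bullet is, by contrast, routine: the Riemann--Roch rank count, the projective-bundle pushforward, and the reconciliation of the two meanings of $d$.
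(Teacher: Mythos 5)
Your proposal matches the paper's own argument in both structure and substance. For the $\beta^\vee$ not effective case you reduce to the reduced class, apply Theorem \ref{robs}, expand $c_N(G(1))$ in powers of $h$, and push down $q$ using $q_*h^m=s_{m-\rk B+1}(B)$ and $c(G)s(B)=c\big(\!-R\hom_\pi(\cI_1,\cI_2\otimes\cL_\b)\big)$; the paper does exactly this, using Manivel's formula \cite[Proposition 1]{Ma} for the expansion, and your reconciliation of the index $N-\rk B+1=d$ is the same arithmetic. For the $\beta^\vee$ effective case you rightly reduce to $\beta,\beta^\vee$ both effective, invoke the Seiberg--Witten structural input (the paper cites \cite[Corollary 3.15]{DKO}, which forces $\vd_\b=0$ and $h^1(\cO_S)=1$), conclude $\mathsf{SW}_\b^{\,j}=0$ for $j>0$ and $\mathsf{SW}_\b^{\,0}=\mathsf{SW}_\b\cdot[L]$, and replace $\cL_\b(1)$ by $L$ over the $0$-dimensional $[S_\b]^{\vir}$. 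The paper routes this through \eqref{form0} directly rather than through the intermediate Theorem \ref{pgzero}, but that theorem is itself a one-line consequence of \eqref{form0}, so the content is identical.

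One small gap: for $i>0$ in the $\beta^\vee$ effective bullet you assert ``both sides are zero,'' but you only justify the vanishing of the left-hand side (via $\mathsf{SW}_\b^{\,i+j}=0$). The right-hand side $\mathsf{SW}_\b\cdot c_{n_1+n_2+i}\big(\!-R\hom_\pi(\cI_1,\cI_2\otimes L)\big)\times[L]$ need not vanish a priori when $\mathsf{SW}_\b\ne0$; its vanishing requires the generalised Carlsson--Okounkov vanishing $c_{n_1+n_2+i}\big(\!-R\hom_\pi(\cI_1,\cI_2\otimes L)\big)=0$ for $i>0$ from \cite[Theorem 3]{GT1}, which the paper cites explicitly at this point. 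With that citation added the argument is complete.
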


\begin{proof}
By the comparison result \eqref{form0} both sides of the first identity vanish if $\vd_\b<0$ or $\beta$ is not effective. So we may assume both $\beta,\,\beta^\vee$ are effective and $\vd_\b\ge0$. By \cite[Corollary 3.15]{DKO} this implies $\vd_\b=0$ and $h^1(\cO_S)=1$.

So, just as in \eqref{form0}, $\cO(1)$ is trivial on the 0-dimensional $[S_\b]^{\vir}$ and we may replace $\mathsf{CO}^{[n_1,n_2]}_\b$ in \eqref{form0} by $R\pi_*\;L-R\hom_\pi(\cI_1,\cI_2\otimes L)$ for one $L\in\Pic_\b(S)$. So applying $(\id\times\id\times\,\mathsf{AJ})_*$ to \eqref{form0} gives
$$
\rho_*\big[\S{n_1,n_2}_\b\big]^{\vir}\,=\,c_{n_1+n_2}\big(R\pi_*\;L-R\hom_\pi(\cI_1,\cI_2\otimes L)\big)\times\deg\,[S_\b]^{\vir}\times[\;\mathrm{point}\;].
$$
This deals with $i=0$. When $i>0$ the left hand side of the required identity vanishes by $\vd_\b=0$ and \eqref{form0}, while the right hand side vanishes by the generalised Carlsson-Okounkov vanishing of \cite[Theorem 3]{GT1}. \medskip

For the second result we may assume that $\beta$ is type (1,1), otherwise $\Pic_\b(S)$ is empty and both sides of the identity vanish. Since we also have $H^2(L)=0$ for all $L\in\Pic_\beta(S)$ the reduced cycle of Section \ref{redoos} is defined, and --- since $p_g(S)=0$ --- it equals the virtual cycle. Therefore by Theorem \ref{robs} (or Theorem \ref{pushBX} of the Introduction) the pushforward of the virtual cycle to $\PP(B)$ is given by
\begin{multline*}
c\_{n_1+n_2+k}\big(B(1)-R\hom_\pi(\cI_1,\cI_2\otimes\cc L_\b(1))\big)\=\\
\sum_{j=0}^{n_1+n_2+k}c_{n_1+n_2+k-j}\big(B-R\hom_\pi(\cI_1,\cI_2\otimes\cc L_\b)\big)\cup c_1(\cO(1))^j,
\end{multline*}
again by \cite[Proposition 1]{Ma}. Here $k=\rk(B)-\vd_\b-\chi(\cO_S)$.

We now cup with $h^i=c_1(\cO(1))^i$ and push down $q\colon\PP(B)\to S^{[n_1]}\times S^{[n_2]}\times\Pic_\b(S)$ (which restricts to the map $\rho$ of \eqref{extend}). Using the fact that $q_*\;h^{i+j}$ is the Segre class $s_{i+j-\chi(L(A))+1}(B)$ for $i+j\ge\chi(L(A))-1$, and zero otherwise, we find that $\rho_{*} \big(h^i \cap\big [\S{n_1,n_2}_\b\big]^{\vir}\big)$ is
$$
\sum_{j=\chi(L(A))-1-i}^{n_1+n_2+k}c_{n_1+n_2+k-j}\big(B-R\hom_\pi(\cI_1,\cI_2\otimes\cc L_\b)\big)\cup s_{i+j-\chi(L(A))+1}(B)
\vspace{-5mm}$$
\[
\hspace{25mm}\=c_{n_1+n_2+i+k-\rk(B)+1}\big(-R\hom_\pi(\cI_1,\cI_2\otimes\cc L_\b)\big).\qedhere
\]
\end{proof}

Finally we use the standard duality in Seiberg-Witten theory under $\beta\leftrightarrow\beta^\vee$ to give interesting dualities between invariants of nested Hilbert schemes under $\beta\leftrightarrow\beta^\vee$ and $n_1\leftrightarrow n_2$. Define the map
$$
\rho^\vee\,\colon\ \S{n_2,n_1}_{\b^\vee}\To S^{[n_1]}\times S^{[n_2]}\times \Pic_\b(S)
$$
by replacing $\beta\leftrightarrow\beta^\vee,\ n_1\leftrightarrow n_2$ in
$\rho\colon\S{n_1,n_2}_\b\To S^{[n_1]}\times S^{[n_2]}\times \Pic_\b(S)$
and then composing with $L\mapsto K_S\otimes L^{-1}\colon$ $\Pic_{\b^\vee}(S)\to\Pic_\b(S)$.

\begin{thm}[Duality] In $H_{2(n_1+n_2+\vd_\b-i)}(S^{[n_1]}\times S^{[n_2]}\times\Pic_\b(S))$,
\begin{align*}
\bullet& \text{ if }p_g(S)>0,\quad
\rho_*\big(h^i\cap\big[\S{n_1,n_2}_\b\big]^{\vir}\big)\=(-1)^{s+i}\,\rho^\vee_{*}\big(h^i\cap\big[\S{n_2,n_1}_{\b^\vee}\big]^{\vir}\big), \\
\bullet& \text{ if }p_g(S)=0, \quad\,
\rho_{*}\big(h^i\cap\big[\S{n_1,n_2}_{\b}\big]^{\vir}\big)\=(-1)^{s+i}\;\rho^\vee_{*}\big(h^i\cap\big[\S{n_2,n_1}_{\b^\vee}\big]^{\vir}\big)+ \\
&\hspace{7cm} c_{d+i}\big(\!-R\hom_\pi(\cI_1,\cI_2\otimes\cL_{\b})\big),
\end{align*}
where $s=n_1+n_2-\chi(\cO_S)-\vd_\b=d-1$.
\end{thm}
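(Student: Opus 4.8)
\medskip\noindent\textbf{Proof strategy.}
The plan is to push both cycles down to Seiberg--Witten data of $\b$ and of $\b^\vee$ using the comparison results above, to identify the resulting tautological $K$-theory classes under relative Serre duality, and then to quote the Seiberg--Witten duality (and, when $p_g=0$, the wall-crossing formula) of \cite{DKO}. Write $\nu\colon\Pic_{\b^\vee}(S)\xrightarrow{\,\sim\,}\Pic_\b(S)$, $M\mapsto K_S\otimes M^{-1}$, for the isomorphism built into the definition of $\rho^\vee$, and normalise the Poincar\'e bundles so that $\nu^*\cL_\b\cong K_S\otimes\cL_{\b^\vee}^{-1}$ (automatic up to a line bundle pulled back from $\Pic_{\b^\vee}(S)$, which is irrelevant to all the Chern classes below). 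Throughout I use relative Serre duality down $\pi$ (fibre dimension $2$, dualising sheaf $K_S$), which gives the $K$-theory identities $R\hom_\pi(\cI_2,\cI_1\otimes K_S\otimes\cL_\b^{-1})=R\hom_\pi(\cI_1,\cI_2\otimes\cL_\b)^\vee$ and $R\pi_*(K_S\otimes\cL_\b^{-1})=(R\pi_*\cL_\b)^\vee$ (likewise with a single $L$ in place of $\cL_\b$), together with $c_k(\alpha^\vee)=(-1)^kc_k(\alpha)$ for any $K$-theory class $\alpha$.

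First suppose $p_g(S)>0$. By Theorem \ref{pg>0}, applied to $(\b,n_1,n_2)$ and, after composing with $\nu$, to $(\b^\vee,n_2,n_1)$, both $\rho_*(h^i\cap[\,\cdot\,]^{\vir})$ and $\rho^\vee_*(h^i\cap[\,\cdot\,]^{\vir})$ vanish for $i>0$, while for $i=0$ they equal $\mathsf{SW}_\b\cdot c_{n_1+n_2}(-R\hom_\pi(\cI_1,\cI_2\otimes L))\times[L]$ and, after Serre duality, $(-1)^{n_1+n_2}\mathsf{SW}_{\b^\vee}\cdot c_{n_1+n_2}(-R\hom_\pi(\cI_1,\cI_2\otimes L))\times[L]$. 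Classical Seiberg--Witten duality \cite{DKO} gives $\mathsf{SW}_{\b^\vee}=(-1)^{\chi(\cO_S)}\mathsf{SW}_\b$, with $\vd_\b=\vd_{\b^\vee}=0$ whenever either invariant is nonzero; hence $s=n_1+n_2-\chi(\cO_S)$, and since $(-1)^s(-1)^{n_1+n_2}(-1)^{\chi(\cO_S)}=1$ this yields $\rho_*(h^i\cap[\,\cdot\,]^{\vir})=(-1)^{s+i}\rho^\vee_*(h^i\cap[\,\cdot\,]^{\vir})$, the first bullet.

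Now suppose $p_g(S)=0$ and set $\mathsf C_\b:=R\pi_*\cL_\b-R\hom_\pi(\cI_1,\cI_2\otimes\cL_\b)$, of rank $n_1+n_2$. By Theorem \ref{pgzero}, $\rho_*(h^i\cap[\S{n_1,n_2}_\b]^{\vir})=\sum_{j\ge0}c_{n_1+n_2-j}(\mathsf C_\b)\cup\mathsf{SW}_\b^{\;i+j}$; applying this to $(\b^\vee,n_2,n_1)$, transporting by $\nu$, and using Serre duality to identify $\nu_*\mathsf C_{\b^\vee}=\mathsf C_\b^\vee$ (so $c_k(\nu_*\mathsf C_{\b^\vee})=(-1)^kc_k(\mathsf C_\b)$) gives $\rho^\vee_*(h^i\cap[\S{n_2,n_1}_{\b^\vee}]^{\vir})=\sum_{j\ge0}(-1)^{n_1+n_2-j}c_{n_1+n_2-j}(\mathsf C_\b)\cup\nu_*\mathsf{SW}_{\b^\vee}^{\;i+j}$. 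Since $(n_1+n_2-j)+(s+i)+(\chi(\cO_S)+\vd_\b+i+j)$ is even, $(-1)^{s+i}(-1)^{n_1+n_2-j}=(-1)^{\chi(\cO_S)+\vd_\b+(i+j)}$, so
\[
\rho_*(h^i\cap[\S{n_1,n_2}_\b]^{\vir})-(-1)^{s+i}\rho^\vee_*(h^i\cap[\S{n_2,n_1}_{\b^\vee}]^{\vir})=\sum_{j\ge0}c_{n_1+n_2-j}(\mathsf C_\b)\cup W_\b^{\;i+j},
\]
where $W_\b^{\;m}:=\mathsf{SW}_\b^{\;m}-(-1)^{\chi(\cO_S)+\vd_\b+m}\,\nu_*\mathsf{SW}_{\b^\vee}^{\;m}$ is the $b^+(S)=1$ Seiberg--Witten wall-crossing term. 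On the other hand $-R\hom_\pi(\cI_1,\cI_2\otimes\cL_\b)=\mathsf C_\b-R\pi_*\cL_\b$ with $R\pi_*\cL_\b$ pulled back from $\Pic_\b(S)$, so the Whitney formula gives $c_{d+i}(-R\hom_\pi(\cI_1,\cI_2\otimes\cL_\b))=\sum_j c_{n_1+n_2-j}(\mathsf C_\b)\cdot s_{\,h^1(\cO_S)-\vd_\b+i+j}(R\pi_*\cL_\b)$ in terms of Segre classes, because $d+i-(n_1+n_2-j)=h^1(\cO_S)-\vd_\b+i+j$ (here one uses Carlsson--Okounkov vanishing to control the Chern classes of $\mathsf C_\b$ above its rank). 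So the second bullet reduces to the D\"urr--Kabanov--Okonek wall-crossing formula \cite{DKO} in the form $W_\b^{\;m}=s_{h^1(\cO_S)-\vd_\b+m}(R\pi_*\cL_\b)\cap[\Pic_\b(S)]$ in $H_*(\Pic_\b(S))$; both sides vanish unless $0\le m\le\vd_\b$ (consistently, there is no wall when $\vd_\b<0$), which also keeps the two sums over $j$ in step.

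The main obstacle is precisely this last identification: extracting from \cite{DKO} the wall-crossing term for the \emph{higher} Seiberg--Witten invariants $\mathsf{SW}_\b^{\;m}$ and recognising it as the Segre class of the pushed-down Poincar\'e bundle, and then pinning down the orientation and normalisation conventions so that all the signs (the $(-1)^s$, the $(-1)^{n_1+n_2-j}$ from dualising, and those in the Seiberg--Witten duality) combine into $(-1)^{s+i}$ --- the remaining bookkeeping being routine. As an independent sanity check one can run the same argument through Theorem \ref{MHa}: when $\b^\vee$ is not effective the ``dual'' term on the right vanishes (as $S_{\b^\vee}=\emptyset$) and the correction term is the whole of $\rho_*(h^i\cap[\,\cdot\,]^{\vir})$ by Theorem \ref{MHa}, while when $\b$ and $\b^\vee$ are both effective one has $\vd_\b=0$, $h^1(\cO_S)=1$, $\chi(\cO_S)=0$ and $m$ forced to $0$, so the claim collapses (via the Segre expansion) to the numerical wall-crossing for $\mathsf{SW}_\b-\mathsf{SW}_{\b^\vee}$.
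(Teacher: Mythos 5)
Your strategy for $p_g>0$ matches the paper exactly (Theorem \ref{pg>0} for both sides, Serre duality down $\pi$, the standard $\mathsf{SW}_\b=(-1)^{\chi(\cO_S)}\mathsf{SW}_{\b^\vee}$ duality, and the parity of the accumulated signs), so that half is fine.

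For $p_g=0$ you reorganise the proof around a uniform higher wall-crossing identity $W_\b^{\;m}=\mathsf{SW}_\b^{\;m}-(-1)^{\chi(\cO_S)+\vd_\b+m}\nu_*\mathsf{SW}_{\b^\vee}^{\;m}=s_{\,h^1(\cO_S)-\vd_\b+m}(R\pi_*\;\cL_\b)$, and you then derive the statement by the Whitney expansion of $c_{d+i}(-R\hom_\pi(\cI_1,\cI_2\otimes\cL_\b))$. This is a clean and attractive repackaging, and your parity and Whitney computations are correct. But you have not actually proved the identity: it is \emph{not} in \cite{DKO} in this generality --- DKO's Theorem 3.16 is only the $m=0,\ \vd_\b=0$ statement --- and you flag this yourself as ``the main obstacle.'' The paper avoids needing a general higher wall-crossing formula by a direct case analysis on the effectiveness of $\b$ and $\b^\vee$: if $\b^\vee$ (resp.\ $\b$) is not effective, the dual side vanishes and the correction term is exactly the second bullet of Theorem \ref{MHa} (resp.\ its Serre dual); and if both are effective then \cite[Cor.\ 3.15]{DKO} forces $\vd_\b=0,\ h^1(\cO_S)=1,\ \chi(\cO_S)=0$, so only $m=0$ contributes and one is reduced to the scalar \cite[Thm.\ 3.16]{DKO}.

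The gap is therefore real but localised and fixable. Your ``sanity check'' is in fact the beginning of a proof of the wall-crossing identity, but as written it is incomplete: you only mention the cases ``$\b^\vee$ not effective'' and ``both effective,'' omitting ``$\b$ not effective, $\b^\vee$ effective'' (which requires applying Theorem \ref{MHa} to $\b^\vee$ and then Serre-dualising, and checking that $\chi(\cO_S)+h^1(\cO_S)=1$ when $p_g=0$ supplies the remaining sign). In the both-effective case the assertion that the claim ``collapses via the Segre expansion'' to the numerical $\mathsf{SW}_\b-\mathsf{SW}_{\b^\vee}$ wall-crossing is exactly the part of the paper's proof that takes real work: one must express $\rho_*$ and $\rho^\vee_*$ via \eqref{form0}, replace $\mathsf{CO}_\b^{[n_1,n_2]}$ by $R\pi_*\cL_\b-R\hom_\pi(\cI_1,\cI_2\otimes\cL_\b)$ on the $0$-dimensional $[S_\b]^{\vir}$, apply \cite[Thm.\ 3.16]{DKO}, and then use $c_{\ge 2}(-R\pi_*\cL_\b)=0$ together with the Carlsson--Okounkov vanishing $c_{\ge n_1+n_2+1}(\mathsf C_\b)=0$ to collapse the convolution. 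If you carry out this three-case argument you will have proved your wall-crossing identity, at which point your route and the paper's route coincide in substance; what you would buy is a tidier statement that isolates the Seiberg--Witten input.
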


\begin{proof}
By Theorem \ref{pg>0} with $\b\leftrightarrow\b^\vee,\ n_1\leftrightarrow n_2$ and $L\leftrightarrow K_S\otimes L^{-1}$,
$$
\rho^\vee_*\big[\S{n_2,n_1}_\b\big]^{\vir}\=\mathsf{SW}_{\b^\vee}\cdot
c_{n_1+n_2}\big(\!-R\hom_\pi(\cI_2,\cI_1\cdot K_S\cdot L^{-1})\big)\times[\;\mathrm{point}\;],
$$
for a fixed $L\in\Pic_\b(S)$. For $p_g(S)>0$ Seiberg-Witten invariants have the standard duality
$$
\mathsf{SW}_\b\=(-1)^{\chi(\cO_S)}\;\mathsf{SW}_{\b^\vee}.
$$
(For instance, in \cite{DKO} this is part of Conjecture 0.1, and it is then shown this would follow from proving $\deg\,[S_{K_S}]^{\vir}=(-1)^{\chi(\cO_S)}$ on all minimal general type surfaces. This latter identity was proved in \cite{CK}.) And
$$
c_{n_1+n_2}\big(\!-R\hom_\pi(\cI_2,\cI_1\cdot K_S\cdot L^{-1})\big)=
(-1)^{n_1+n_2}c_{n_1+n_2}\big(\!-R\hom_\pi(\cI_1,\cI_2\cdot L)\big)
$$
by Serre duality down the fibres of $\pi$. This gives the first identity for $i=0$. For $i>0$ it is trivial by Theorem \ref{pg>0}.
\medskip

To prove the second identity we distinguish 3 cases. If $\beta^\vee$ is not effective, i.e. $H^2(L)=0$ for all $L\in\Pic_\b(S)$, then the result is the last part of Theorem \ref{MHa}. Similarly if $\beta$ is not effective then we apply the last part of Theorem \ref{MHa} with $\beta\leftrightarrow\beta^\vee,\,n_1\leftrightarrow n_2$. By $K_S\otimes\cL_{\b^\vee}^{-1}=\cL_\b,\ \vd_\b=\vd_{\b^\vee}$ and Serre duality down the fibres of $\pi$,
$$
c_{d+i}\big(\!-R\hom_\pi(\cI_2,\cI_1\otimes\cL_{\b^\vee})\big)\= (-1)^{d+i}c_{d+i}\big(\!-R\hom_\pi(\cI_1,\cI_2\otimes\cL_{\b})\big),
$$
which gives the required result.

Finally we consider $\beta$ and $\beta^\vee$ both effective. If $\vd_\b<0$ then the left hand side of the identity is zero by the comparison result Theorem \ref{comparison}. On the right hand side, we have
\begin{multline*}
c_{n_1+n_2+h^1(\cO_S)-\vd_\b}\big(\!-R\hom_\pi(\cI_1,\cI_2\otimes\cL_{\b})\big)\= \\ \sum_{i\ge0}c_i(-R\pi_*\;\cL_\b)\cdot
c_{n_1+n_2+h^1(\cO_S)-\vd_\b-i}\big(R\pi_*\;\cL_\b-R\hom_\pi(\cI_1,\cI_2\otimes\cL_{\b})\big).
\end{multline*}
The first term of the sum vanishes for $i>\dim\Pic_\b(S)=h^1(\cO_S)$, while the second term vanishes for $i<h^1(\cO_S)-\vd_\beta$ by the generalised Carlsson-Okounkov vanishing of \cite[Theorem 3]{GT1}. When $\vd_\beta<0$ this makes the whole sum vanish.

So we may assume that $\vd_\b\ge0$ and $\beta,\,\beta^\vee$ are both effective. This implies $\vd_\b=0$ and $h^1(\cO_S)=1$ by \cite[Corollary 3.15]{DKO}.

So, just as in \eqref{form0}, $\cO(1)$ is trivial on the 0-dimensional $[S_\b]^{\vir}$ and we may replace $\mathsf{CO}^{[n_1,n_2]}_\b$ in \eqref{form0} by the pull back of $R\pi_*\;\cL_\b-R\hom_\pi(\cI_1,\cI_2\otimes\cL_\b)$.
Thus pushing \eqref{form0} down by $(\id\times\id\times\,\mathsf{AJ}_\b)_*$ gives $\rho_*\big[\S{n_1,n_2}_\b\big]^{\vir}$ as
\beq{cf}
c_{n_1+n_2}\big(R\pi_*\;\cL_\b-R\hom_\pi(\cI_1,\cI_2\otimes\cL_\b)\big)\cap\big[\S{n_1}\times\S{n_2}\big]\times\mathsf{AJ}_{\b*}\;[S_\b]^{\vir}.
\eeq
Swapping $\beta\leftrightarrow\beta^\vee$ and $n_1\leftrightarrow n_2$ shows $\rho^\vee_*\big[\S{n_2,n_1}_{\b^\vee}\big]^{\vir}$ is
$$
c_{n_1+n_2}\big(R\pi_*\;\cL_{\b^\vee}\!-\!R\hom_\pi(\cI_2,\cI_1\otimes\cL_{\b^\vee})\big)\cap\big[\S{n_1}\times\S{n_2}\big]\times\mathsf{AJ}_{\b^\vee*}\;[S_{\b^\vee}]^{\vir}.
$$
By Serre duality down $\pi$ and the identity $c_i(E)=(-1)^ic_i(E^\vee)$, this is
\begin{align} \nonumber
(-1)^{n_1+n_2}c_{n_1+n_2}\big(\!\;R\pi_*\;\cL_\b-R\hom_\pi(\cI_1,\cI_2\otimes\cL_\b)\big)\,\cap& \\ \big[\S{n_1}\times\S{n_2}\big]&\times\mathsf{AJ}_{\b^\vee*}\; [S_{\b^\vee}]^{\vir}. \label{amn}
\end{align}
We have noted that $h^1(\cO_S)=1$, so $\chi(\cO_S)=0$. Thus, identifying $\Pic_\b(S)\cong\Pic_{\b^\vee}(S)$ via $L\mapsto K_S\otimes L^{-1}$, \cite[Theorem 3.16]{DKO} gives
$$
\mathsf{AJ}_{\b*}\;[S_\b]^{\vir}\=\mathsf{AJ}_{\b^\vee*}\;[S_{\b^\vee}]^{\vir}+c_1(-R\pi_*\;\cL_\b).
$$
Since $\Pic_\b(S)$ is 1-dimensional, $c\_{\ge2}(-R\pi_*\;\cL_\b)=0$. And $c_{\;\ge n_1+n_2+1}(R\pi_*\;\cL_\b-R\hom_\pi(\cI_1,\cI_2\otimes\cL_\b))=0$ by the generalised Carlsson-Okounkov vanishing of \cite[Theorem 3]{GT1}. Therefore
\begin{multline*}
c_1(-R\pi_*\;\cL_\b)\;c_{n_1+n_2}\big(\!\;R\pi_*\;\cL_\b-R\hom_\pi(\cI_1,\cI_2\otimes\cL_\b)\big)\=
 \\c_{n_1+n_2+1}\big(\!-R\hom_\pi(\cI_1,\cI_2\otimes\cL_\b)\big).
\end{multline*}
Substituting this into \eqref{amn} and comparing with \eqref{cf} yields
\begin{multline*}
\rho^\vee_*\big[\S{n_2,n_1}_{\b^\vee}\big]^{\vir}\=(-1)^{n_1+n_2}\rho_*\big[\S{n_1,n_2}_\b\big]^{\vir} \\
-(-1)^{n_1+n_2}c_{n_1+n_2+1}\big(\!-R\hom_\pi(\cI_1,\cI_2\otimes\cL_\b)\big)\,\cap \big[\S{n_1}\times\S{n_2}\big]. 
\end{multline*}
Multiplying by $(-1)^{n_1+n_2}$ and noting $d=n_1+n_2+1$ then gives the required result for $i=0$.

For $i>0$ the restriction of $h^i$ to both $[S_\b]^{\vir}$ and $\big[S^{[n_1,n_2]}_\b\big]^{\vir}$ are trivial (the former by $\vd_\b=0$, the latter by the comparison result Theorem \ref{comparison}).
\end{proof}

\begin{thm} If $p_g(S)>0$ the nested Hilbert scheme contributions to the Vafa-Witten invariants are invariants of $S$'s oriented diffeomorphism type.
\end{thm}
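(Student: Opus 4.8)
The plan is to reduce the claim, via the comparison theorems already proved, to two known facts: the oriented-diffeomorphism invariance of Seiberg--Witten invariants for $b^+>1$, and the EGL-style universality of tautological integrals over Hilbert schemes of points. For $p_g(S)>0$ and $r$ prime, the vanishing theorem \cite[Theorem 5.23]{TVW} identifies all monopole contributions to the rank $r$ Vafa--Witten invariants with integrals of the VW integrand \cite[Section 8]{TT1} --- a universal rational expression in equivariant Euler classes of the universal complexes $R\hom_\pi(\cI_i,\cI_j\otimes\cL_{\b_i})$ --- over the virtual cycles $\big[\S{n_1,\dots,n_\ell}_{\b_1,\dots,\b_{\ell-1}}\big]^{\vir}$, after tensoring by a line bundle. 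Expanding this integrand in the hyperplane classes $h_i=c_1(\cO(1_i))$ with coefficients $\alpha$ pulled back from $X=\prod_j\S{n_j}\times\prod_i\Pic_{\b_i}(S)$, and applying Theorem \ref{satis} (and in the $\ell$-step case its evident analogue, obtained from Theorem \ref{hcompar} by the argument of its proof), each pushforward $\rho_*\big(\prod_i h_i^{k_i}\cap[\,\cdot\,]^{\vir}\big)$ vanishes unless all $k_i=0$, in which case it equals $\prod_i\mathsf{SW}_{\b_i}$ times a tautological class on $\prod_j\S{n_j}$, each $\Pic_{\b_i}(S)$ factor collapsing to its point class $[L_i]$. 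Integrating against $\alpha$ restricted to the fibre over $\prod_i\{L_i\}$ therefore expresses each nested Hilbert scheme contribution as $\prod_i\mathsf{SW}_{\b_i}$ times a tautological integral over $\S{n_1}\times\cdots\times\S{n_\ell}$ whose coefficients involve $\cL_{\b_i}$ only through the universal sheaves twisted by them. Finally one sums over all $n_j$ and all compatible curve classes $\b_i$; by Laarakker's vanishing \cite[Proposition 3.5]{La1} the Gieseker-unstable strata contribute zero, so this sum is polarisation-independent and runs over an index set determined by the intersection lattice $H^2(S,\Z)$.

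Next I would check that every surviving ingredient is an oriented-diffeomorphism invariant. For $b^+(S)>1$, i.e.\ $p_g(S)>0$, the numbers $\mathsf{SW}_\b$ --- taken with the canonical complex-geometric homology orientation --- and the set of basic classes are oriented-diffeomorphism invariants by the foundational properties of Seiberg--Witten theory; moreover $\vd_\b=\tfrac12\b.(K_S-\b)$ together with the basic-class condition fixes $K_S^2$ and the pairings $\b_i.K_S$ from this invariant data, while $c_2(S)=e(S)$ and $b_1(S)$ are topological invariants and $K_S^2=c_1(S)^2=2e(S)+3\sigma(S)$ is forced by the signature theorem anyway. The remaining tautological integrals over $\S{n_1}\times\cdots\times\S{n_\ell}$ are, by the cobordism argument of Ellingsrud--G\"ottsche--Lehn \cite{EGL} carried out for exactly these integrands by Laarakker \cite{La1,La2}, universal functions of $c_1(S)^2$, $c_2(S)$, the $\b_i^2$, the $\b_i.K_S$, the $n_j$ and $h^1(\cO_S)=\tfrac12 b_1(S)$ (the latter entering through the normalising factor $r^{2h^1(\cO_S)}$ counting torsion twists). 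Assembling the pieces, the total nested Hilbert scheme contribution becomes a sum, over a diffeomorphism-invariant index set, of products of Seiberg--Witten invariants with universal functions of oriented-diffeomorphism invariants, hence is one itself.

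The hard part will be the universality input in the last step: one must verify that, after the reduction to $\prod_j\S{n_j}$, the VW monopole integrand really expands into a finite combination of genuinely tautological integrals to which the EGL machinery applies, and that the resulting universal functions depend on the twisting bundles only through $\b_i^2$ and $\b_i.K_S$. I would treat this by quoting Laarakker's computation \cite{La1,La2} (proved there for $h^{0,1}=0$, the general case along the same lines) rather than reproving it. The only genuinely new bookkeeping is the behaviour of $K_S$ under diffeomorphisms, handled as above via basic classes and the signature theorem, together with the observation that, because one sums over the full set of allowed curve classes, the $\mathrm{Diff}^+(S)$-action on $H_2(S,\Z)$ merely permutes the summands and the global homology-orientation sign cancels against the canonical normalisation; I expect these to be the places requiring the most care.
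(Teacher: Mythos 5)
Your proposal is essentially correct and matches the paper's strategy: reduce via Theorem \ref{satis} to $\mathsf{SW}_\beta$ times a tautological integral over $\S{n_1}\times\S{n_2}$ (with $\Pic_\beta(S)$ collapsing to a point since $\vd_\beta=0$ for basic classes), invoke EGL-type universality to express that integral as a polynomial in $c_1(S)^2, c_2(S), \beta^2, \beta.K_S$, handle $\beta.K_S$ via $\vd_\beta=0 \Rightarrow \beta.K_S = \beta^2$ (otherwise $\mathsf{SW}_\beta=0$), and use Laarakker's vanishing to make the sum over $\beta$ polarisation-independent. The only material difference is that the paper spells out the EGL induction itself (passing to $T = S\sqcup S$ via the GNY adaptation and iterating through $T^{[n-1]}\times S$, etc.) rather than citing Laarakker's treatment, but this is a choice of exposition rather than a different argument; your additional remarks about the homology-orientation sign and the $\mathrm{Diff}^+(S)$-equivariance of the index set are correct refinements that the paper leaves implicit.
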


\begin{proof}
We sketch the argument for 2-step nested Hilbert schemes; the general case is no more complicated. Handling stability as in Footnote \ref{unsta}, we express the invariants as a sum over $\beta$ with $\deg\beta<\deg K_S$ of terms 
$$
\mathsf{VW}_{2,\b,n}\ =\ \sum_{n=n_1+n_2} \int_{\big[\S{n_1,n_2}_\b\big]^{\vir}}\frac{1}{e(N^{\vir})}\,.
$$
Let $L$ be any line bundle with $c_1(L)=\beta$. Applying Theorem \ref{pg>0} gives
\begin{equation}\label{diesel}
\mathsf{VW}_{2,\b,n}\ =\ \mathsf{SW}_\b \sum_{n=n_1+n_2} \int_{\S{n_1}\times \S{n_2}} A(\cI_1,\cI_2,L),
\end{equation}
where the integrand $A(\cI_1,\cI_2,L)$ is
{\small
$$
\frac{c_n(-R\hom_\pi(\cI_1,\cI_2 L))\ e(R\hom_\pi(\cI_2L,\cI_1K_S\;\t))\ e(R\hom_\pi(\cI_1,\cI_2LK_S^{-1}\t^{-1}))}
{e(R\hom_\pi(\cI_1,\cI_1K_S\t)\_0)\ e(R\hom_\pi(\cI_2,\cI_2K_S\t))\ e(R\hom_\pi(\cI_2,\cI_1L^{-1}K_S^2\t^2))}\,.$$
}

The coefficient $\mathsf{SW}_\b$ is proved in \cite{DKO, CK} to be a Seiberg-Witten invariant, which is an oriented diffeomorphism invariant when $p_g(S)>0$.\footnote{When $p_g(S)=0$ this need not be quite true; only the unordered pair $(\mathsf{SW}_\b,\mathsf{SW}_{\b^\vee})$ is invariant under oriented diffeomorphisms.} 

We would now like to apply the inductive method of \cite{EGL}, which deals with integrals over single Hilbert schemes $S^{[n_1]}$. This is adapted in \cite[Section 5]{GNY}\footnote{We thank a referee for suggesting this method and pointing out the reference \cite{GNY}.} to deal with products of Hilbert schemes $S^{[n_1]}\times S^{[n_2]}$, essentially by applying \cite{EGL} to the disjoint union $T:=S\sqcup S$ and its Hilbert schemes
$$
T^{[n]}\=\bigsqcup_{n=n_1+n_2} \S{n_1}\times \S{n_2}.
$$
Let $\mathfrak I_1$ and $\mathfrak I_2$ denote the ideal sheaves on $S\times T^{[n]}$ whose restrictions to $S\times \S{n_1} \times \S{n_2}$ are $\cI_1$ and $\cI_2$ respectively. Then \eqref{diesel} can be rewritten
$$
\mathsf{VW}_{2,\b,n}\=\mathsf{SW}_\b \int_{T^{[n]}} A(\mathfrak I_1,\mathfrak I_2,L).$$
The induction writes this as an integral over $T^{[n-1]}\times S$, then $T^{[n-2]}\times S^2$ and so on, finally giving an integral over $S^n$. The first step pulls back along the generically finite map $\psi$, divides by $\deg\psi$, then pushes down $\sigma$, in the diagram
\beq{ff}\xymatrix@=18pt{
T^{[n-1,n]} \ar[r]^-\psi\ar[d]_-\sigma& T^{[n]} \\
T^{[n-1]}\times S.
}\eeq
We split $T^{[n-1,n]}$ into two (unions of) connected components
$$
\left(\bigsqcup_{n_1+n_2=n,\ n_1\ge1}S^{[n_1-1,n_1]}\times S^{[n_2]}\right)\ \sqcup\ 
\left(\bigsqcup_{n_1+n_2=n,\ n_2\ge1}S^{[n_1]}\times S^{[n_2-1,n_2]}\right)
$$
and restrict the diagram to each in turn; we describe the first. On each of its components $\sigma$ is projection down the cone $\PP^*(I_Z)\to S^{[n_1-1]}\times S$ --- where $Z=Z_{n_1-1}\subset S^{[n_1-1]}\times S$ is the universal subscheme --- multiplied by the identity on $S^{[n_2]}$. As such it carries a line bundle $\cO(1)$; it is the kernel of $\cO^{[n_1]}\to\hspace{-3mm}\to\cO^{[n_1-1]}$ in the obvious notation.

By \cite[Equation 5.4]{GNY} and \cite[Equation 10]{EGL} the integrand $A(\mathfrak I_1,\mathfrak I_2,L)$ pulls back to a rational function in $\t$ and the Chern classes of
\begin{enumerate}
\item $R\hom_\pi(\mathfrak I_i,\mathfrak I_j\cdot\xi)$ pulled back from $T^{[n-1]}$, where $\xi$ is a tensor product of powers of $L$ and $K_S$,
\item $\mathfrak I_1, \mathfrak I_2$ pulled back from $T^{[n-1]}\times S$,
\item $L$ and $T_S$ pulled back from $S$, and
\item the line bundle $\cO(1)$.
\end{enumerate}
Taking the coefficient of $\t^0$ gives a polynomial in these terms. Since (1)-(3) are pulled back from $T^{[n-1]}\times S$ we can push down by $\sigma_*$ using the projection formula and \cite[Lemma 1.1]{EGL} to replace powers of $c_1(\cO(1))$ by polynomials in the Chern classes of $\mathfrak I_1$ and $\mathfrak I_2$. This completes the first step.

The second (and later) steps are similar, with \eqref{ff} replaced by
$$
\xymatrix@C=30pt@R=18pt{
T^{[n-2,n-1]}\times S \ar[r]^-{\psi\times\id_S}\ar[d]_-{\sigma\times\id_S}& T^{[n-1]}\times S \\
T^{[n-2]}\times S\times S.}
$$
The pull back and push down handles classes pulled back from either factor of $T^{[n-1]}\times S$ just as before. The new issue is to deal with Chern classes of $\mathfrak I_1$ and $\mathfrak I_2$ on $T^{[n-1]}\times S$. Here we use 
\cite[Equation 5.2]{GNY} to express them as pull backs from $T^{[n-2]}\times S\times S$. So the induction continues, just as in \cite[Proposition 3.1]{EGL}.

The final result is an integral over $S^n$ which can be written as a polynomial in the numbers $c_1(S)^2,\,c_2(S),\,\beta^2$ and $c_1(S).\beta$. See also \cite[Lemma 5.5]{GNY} and \cite[Proposition 7.2]{La1}, or \cite[Section 3]{KT2} when $n_1=0$.

Since $e(S),\,p_1(S)$ and the intersection form are oriented diffeomorphism invariants of $S$, so are $c_1(S)^2,\,c_2(S),\,\beta^2$. This leaves $c_1(S).\beta$, but --- as an observant referee pointed out --- we may assume this equals $-\beta^2$; otherwise $\vd_\beta\ne0$ so $\mathsf{SW}_\beta=0$ and the integral \eqref{diesel} vanishes.
\end{proof}

\bibliographystyle{halphanum}
\bibliography{references}

\bigskip \noindent {\tt{amingh@math.umd.edu}} \medskip

\noindent Department of Mathematics \\
\noindent University of Maryland \\
College Park, MD 20742 \\
USA

\bigskip \noindent {\tt{richard.thomas@imperial.ac.uk}} \medskip

\noindent Department of Mathematics \\
\noindent Imperial College London\\
\noindent London SW7 2AZ \\
\noindent UK

\end{document}